\documentclass[11pt]{article}

\usepackage{amsmath}
\usepackage{amssymb}
\usepackage{amsthm}
\usepackage{setspace}

\allowdisplaybreaks
\usepackage{parskip}
\usepackage{xcolor}  
\usepackage{graphicx}
\usepackage{subcaption}
\usepackage{multicol}
\usepackage{multirow}
\usepackage[sort,round]{natbib}
\usepackage{lmodern}

\newtheorem{theorem}{Theorem}

\newtheorem{corollary}[theorem]{Corollary}

\newtheorem{proposition}[theorem]{Proposition}
\newtheorem{remark}[theorem]{Remark}

\usepackage{anysize}
\marginsize{15mm}{15mm}{15mm}{30mm} 

\DeclareMathOperator{\TCE}{TCE}
\DeclareMathOperator{\TV}{TV}
\DeclareMathOperator{\TS}{TS}

\providecommand{\keywords}[1]{\textbf{\textit{Keywords: }} #1}

\begin{document}

\title{Risk measures for infinite-server queues driven by a Cox-BESQ process}

\author{Jos\'{e} Da Fonseca\thanks{Auckland University of Technology, Business School, Department of Economics and Finance, Private Bag 92006, 1142 Auckland, New Zealand. Phone: +64 9 9219999 extn 5063. Email: jose.dafonseca@aut.ac.nz and PRISM Sorbonne EA 4101, Universit\'e Paris 1 Panth\'eon - Sorbonne, 17 rue de la Sorbonne, 75005 Paris, France.  ORCID: 0000-0002-6882-4511} 
\and
Yajun Shi\thanks{Auckland University of Technology,  Department of Mathematical Sciences, Private Bag 92006, 1142 Auckland, New Zealand. Email: yajun.shi@autuni.ac.nz. ORICD: 0009-0007-5262-2724}
\and
Wenjun Zhang\thanks{Corresponding author: Auckland University of Technology,  Department of Mathematical Sciences, Private Bag 92006, 1142 Auckland, New Zealand. Phone: +64 9 9219999 extn 5094. Email: wenjun.zhang@aut.ac.nz. ORICD: 0000-0001-7992-8204}
}

\date{\today}
\maketitle 

\singlespacing
%\onehalfspacing 
%\doublespacing

\begin{abstract}
This paper introduces a model for infinite-server queues driven by a Cox process with stochastic intensity given by a squared Bessel (BESQ) process. Exploiting the affine structure of the underlying dynamics, we derive a closed-form expression for the joint moment-generating function (MGF) of the arrival intensity, the number of customers in the system, and the integral of the number of customers in the system over a time interval. This MGF can also be obtained at two (or more) dates, thereby allowing an analysis of the time evolution of infinite-server queues driven by a Cox--BESQ process. The availability of these explicit MGFs enables the development of risk measures for these variables, such as the joint moments of the arrival intensity, the number of customers in the system, and the integral of the number of customers in the system over a time interval, conditional on one of these variables exceeding a critical threshold. These risk measures can be determined at one or two dates. They rely essentially on the explicit form of the MGF of the state variables and on Fourier representations of the risk measures. Numerical experiments demonstrate the practical implementation of the proposed methodology and highlight the relevance of the theoretical results.
\end{abstract}

\keywords{Infinite-Server Queues, Cox-BESQ Process, Moments,  Moment-Generating Function, Risk Measures}
	
\setcounter{page}{1}

\section{Introduction}

This paper develops a model for  infinite-server queues driven by a Cox process with a stochastic intensity given by a squared Bessel process (BESQ). Thanks to the analytical properties of the model, and in particular its affine and solvable properties, we are able to determine explicitly the joint moment-generating function (MGF) of the arrival intensity, the number of customers in the system, and the integral of the number of customers in the system over a time interval, which are the state variables of the model. Using the affine property of the model, the MGF of the state variables at two different dates can also be determined explicitly. Thanks to these MGFs, we develop closed-form representations for risk measures of these variables, such as expected joint moments conditional on one component of the state vector, namely the arrival intensity, the number of customers in the system, or the cumulative number of customers in the system over a time interval, exceeding a critical threshold. These risk measures can be extended to two or more dates, thereby providing a detailed understanding of the behaviour of the system under critical conditions across time. The model provides an interesting alternative to infinite-server queues driven by a Hawkes process proposed in \citet{DawPender2018} and \citet{KoopsSaxenaBoxmaMandjes2018}.\footnote{Note that \citet{DawPender2018} consider infinite-server queues driven by a (scalar) Hawkes process in which customers pass through different phases before leaving the system. Whenever we refer to this work, we are referring to their use of a Hawkes process as the arrival intensity and their use of the infinitesimal generator to derive the main results. Although this description may appear reductive, these aspects are of both practical and theoretical importance.} Even though the model does not possess the self-exciting behaviour of a Hawkes process, it shares some of its statistical properties. In contrast, however, its analytical tractability allows for applications that may be difficult to develop within Hawkes-based frameworks.

The remainder of the paper is organised as follows. Section~\ref{sec:model} develops the model, specifies its dynamics, and derives all the MGFs together with their mathematical properties. Section~\ref{sec:risk_management_of_queue} develops risk measures for infinite-server queues, whether based on the arrival intensity, the number of customers in the system, or the cumulative number of customers in the system over a time interval. Section~\ref{sec:numerical_experiments} presents a range of numerical examples illustrating the proposed methodology, while Section~\ref{sec:Conclusion} concludes the paper.
\section{The model}\label{sec:model}

\subsection{Dynamics and simple properties of Cox-BESQ driven infinite-server queues}

Let \((\Omega, \mathcal{F}, \mathbb{P})\) be a complete probability space. Assume a Cox process \((N_t)_{t \geq 0}\) with stochastic intensity \((\lambda_t)_{t\geq 0}\) solving the stochastic differential equation:
\begin{equation}
	d\lambda_{t}=\kappa(\theta-\lambda_{t})dt+\sigma\sqrt{\lambda_{t}}dw_{t}, \label{eq:intensity}
\end{equation}	
with \((w_t)_{t \geq 0}\) a standard scalar Brownian motion under the probability measure \(\mathbb{P}\), and \(\kappa, \theta, \sigma \in \mathbb{R}_+\). We denote by \(\mathbb{E}[\cdot]\) the expectation under the probability measure \(\mathbb{P}\) while \(\mathbb{E}_t[\cdot] = \mathbb{E}[\cdot | \mathcal{F}_t] \) stands for the conditional expectation under this probability measure. Throughtout the work we assume the Feller condition  \(2\kappa \theta \geq \sigma^2\) is satisfied. Assume infinite-server queues with arrival rate given by \((N_t)_{t \geq 0}\), denote by \((q_t)_{t \geq 0}\) the process of the number of customers in the system. This process jumps positively by one unit when \(N_t\) jumps by one unit, it is the arrival process, and decreases at time \(t\) by one unit following a jump process, independent of the arrival jump process, with intensity \(\mu q_t\) with \(\mu>0\). 

The square root process \eqref{eq:intensity}, also named BESQ, is extensively used in quantitative finance, see \citet[Chap. 6]{JeanblancYorChesney2009}, whilst for the Cox process, we refer to \citet[p. 21]{Bremaud1981}. 

Denote the process \((x_t)_{t \geq 0}=(\lambda_t,q_t)_{t \geq 0}\in \mathbb{R}_+\times \mathbb{N}\) then its infinitesimal generator is defined for a function \(f:\mathbb{R}_+\times \mathbb{N}\to \mathbb{R}\) by
\begin{align}
	\mathcal{L}f(x)=\kappa(\theta-\lambda)\partial_\lambda f + \frac{\sigma^2}{2}\lambda \partial_{\lambda\lambda}^2f + \lambda \Big(f(\lambda,q+1)-f(x)\Big) + \mu q \Big(f(\lambda,q-1)-f(x)\Big), \label{eq:infinitesimal_generator}
\end{align}
and Feynman-Kac theorem gives for a function \(f:\mathbb{R}_+\times \mathbb{N}\to \mathbb{R}\) that the function \(t \to \mathbb{E}[f(x_t)]\) solves the partial differential equation (PDE): 
\begin{align}
	\frac{d\mathbb{E}[f(x_t)]}{dt}&= \mathbb{E}[\mathcal{L}f(x_t)]. \label{eq:PDE}
\end{align}

We follow \citet{DawPender2018} and make extensive use of the infinitesimal generator to derive the joint moments of \(q_t\) and \(\lambda_t\), as they did for infinite-server queues driven by a Hawkes process. As in their case, the model proposed here is affine, one consequence of which is that polynomial functions are preserved by the infinitesimal generator \eqref{eq:infinitesimal_generator}. In contrast to queues driven by a Hawkes process, however, the properties of the proposed model also allow us to derive explicitly several MGFs, which provide the key tools for developing analytical risk measures for \((\lambda_t, q_t, \int_0^t q_s ds)\).

Note that when \(\mu=0\), we have \(q_t=N_t\), provided that \(q_0=N_0\). Moreover, it is straightforward to prove, see \citet[Remark 2.3]{DFM2021}, that the first two moments of the number of jumps over a fixed time interval for a Cox process with BESQ intensity and those of the number of jumps over a fixed time interval for a (scalar) Hawkes process with an exponential kernel are identical when the parameters of the two processes satisfy certain equality constraints. This implies that, as far as the first two moments are concerned, the two processes are indistinguishable.

The two processes are, of course, very different. In the Hawkes process, the intensity depends on the number of past jumps through the well-known self-exciting property, whereas in the Cox process with BESQ intensity, the intensity is an autonomous process. In the Hawkes process, the intensity is piecewise continuous and highly regular between two jumps, following an exponential function of time, whereas in the Cox process with BESQ intensity, the intensity satisfies the SDE \eqref{eq:intensity}. The Cox process with BESQ intensity is an example of a process similar, as far as only the first two moments are concerned, to the Hawkes process, as mentioned by \citet[p.~83]{Hawkes1971}.

The fact that the Cox process with BESQ intensity, and by extension infinite-server queues with BESQ arrival intensity, does not possess the attractive self-exciting property of the Hawkes process may not be as severe a limitation as it might appear at first sight when one considers its analytical tractability, particularly the availability of several MGFs and the applications they enable. As we shall see, these MGFs can be derived easily thanks to the affine property of the Cox process with BESQ intensity, and, more generally, of infinite-server queues with BESQ arrival intensity, together with the extensive use of the infinitesimal generator.

The following result is straightforward to prove by applying \eqref{eq:PDE} to the function \(f(x)=\lambda^p q^m\) for \(p,m\in \mathbb{N}\). It relies on the property that affine processes are stable under polynomial functions; see \citet{CuchieroKeller-ResselTeichmann2012}.\footnote{The Hawkes process used in \citet{DawPender2018} and \citet{KoopsSaxenaBoxmaMandjes2018} is also affine and enables the derivation of the evolution of its moments; see also \citet{DassiosZhao2011}, \citet{DaFonsecaZaatour2014}, and \citet{CuiHawkesYi2020}, among others.}
\begin{proposition}\label{prop:moment_lambda_q}
	Assume a process \((x_t)_{t\in \mathbb{R}_+}\) with an infinitesimal generator of the form \eqref{eq:infinitesimal_generator}, then the moment \(\mathbb{E}[\lambda_t^pq_t^m]\) with \(p,m\in \mathbb{N}\) solves the ordinary differential equation (ODE):
	\begin{align}
		\frac{d\mathbb{E}[\lambda_t^pq_t^m]}{dt}&=\kappa\theta p\mathbb{E}[\lambda_{t}^{p-1}q_{t}^{m}]-\kappa p \mathbb{E}[\lambda_{t}^{p}q_{t}^{m}]+\frac{\sigma^{2}p(p-1)}{2}\mathbb{E}[\lambda_{t}^{p-1}q_{t}^{m}] \nonumber \\
		&+\sum_{i=1}^m\binom{m}{i} \mathbb{E}[\lambda_{t}^{p+1}q_t^{m-i}]+\mu \sum_{i=2}^m \binom{m}{i}(-1)^i\mathbb{E}[\lambda_{t}^{p}q_{t}^{m-i+1}]- \mu m\mathbb{E}[\lambda_{t}^{p}q_{t}^{m}]. \label{eq:moment_lambda_q}
	\end{align}
\end{proposition}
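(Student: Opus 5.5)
We apply the Feynman--Kac relation \eqref{eq:PDE} to the polynomial test function $f(\lambda,q)=\lambda^p q^m$ and compute $\mathcal{L}f$ term by term from \eqref{eq:infinitesimal_generator}. Taking expectations then gives the ODE. The computation naturally splits into three parts: the diffusive part in $\lambda$, the arrival jump term, and the departure jump term.

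For the diffusive part, $\partial_\lambda f = p\lambda^{p-1}q^m$ and $\partial^2_{\lambda\lambda} f = p(p-1)\lambda^{p-2}q^m$. Hence
\[
\kappa(\theta-\lambda)\partial_\lambda f+\tfrac{\sigma^2}{2}\lambda\partial^2_{\lambda\lambda}f
=\kappa\theta p\lambda^{p-1}q^m-\kappa p\lambda^p q^m+\tfrac{\sigma^2 p(p-1)}{2}\lambda^{p-1}q^m .
\]
The factor $\lambda$ in the diffusion coefficient lowers the power $p-2$ back to $p-1$. These three terms give the first line of \eqref{eq:moment_lambda_q}.

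For the arrival term, the binomial theorem gives $(q+1)^m-q^m=\sum_{i=1}^m\binom{m}{i}q^{m-i}$. Multiplying by $\lambda\cdot\lambda^p$ yields $\sum_{i=1}^m\binom{m}{i}\lambda^{p+1}q^{m-i}$.

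For the departure term, $(q-1)^m-q^m=\sum_{i=1}^m\binom{m}{i}(-1)^iq^{m-i}$. Multiplying by $\mu q\lambda^p$ gives $\mu\sum_{i=1}^m\binom{m}{i}(-1)^i\lambda^pq^{m-i+1}$. Separating the $i=1$ summand, which equals $-\mu m\lambda^pq^m$, leaves exactly the last two terms of \eqref{eq:moment_lambda_q}. The boundary case $q=0$ causes no difficulty, because the rate $\mu q$ vanishes there. Summing the three parts and taking expectations, with $\mathbb{E}$ commuting with the finite sum, gives the stated ODE.

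The main obstacle is rigour rather than algebra: \eqref{eq:PDE} must hold for the unbounded function $\lambda^pq^m$. We would justify it by applying Itô's formula for jump-diffusions to $f(x_t)$. This writes $f(x_t)-f(x_0)-\int_0^t\mathcal{L}f(x_s)\,ds$ as a local martingale, which consists of a Brownian integral plus compensated jump integrals. To show this local martingale is a true martingale, we would localize with stopping times and then pass to the limit. The passage uses finiteness of all moments: $\lambda_t$ has moments of every order, since the CIR process has an explicit noncentral chi-square law. Conditionally on the path of $\lambda$, $q_t$ is dominated by $q_0+N_t$, and $N_t$ is mixed Poisson with mean $\int_0^t\lambda_s\,ds$, so it has all moments. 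Hölder's inequality then controls the mixed moments, and dominated convergence permits exchanging the limit, the expectation and the time integral. Differentiating in $t$ then gives \eqref{eq:moment_lambda_q}. Alternatively, we could simply invoke the polynomial-preservation property of affine processes from \citet{CuchieroKeller-ResselTeichmann2012}, as the paper suggests.
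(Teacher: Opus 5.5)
Your proposal is correct and follows the paper's own route: apply \eqref{eq:PDE} to $f(\lambda,q)=\lambda^p q^m$, expand the arrival and departure terms binomially, and split off the $i=1$ departure summand to obtain $-\mu m\,\mathbb{E}[\lambda_t^p q_t^m]$. Your localization and moment-bound justification of \eqref{eq:PDE} for this unbounded test function is more careful than the paper, which simply invokes polynomial preservation for affine processes.
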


Although the ODEs have the same structure as those in \citet{DawPender2018} and could be analysed within the matrix framework of \citet{DawPender2023}, the fact that the intensity is an autonomous process simplifies the computation of these moments and reduces it to solving a set of scalar ODEs. Indeed, from \eqref{eq:moment_lambda_q}, to compute the ODE satisfied by \(\mathbb{E}[\lambda_t^p q_t^m]\), one needs to solve all the ODEs satisfied by the terms of the form \(\lbrace \mathbb{E}[\lambda_t^{p+i} q_t^{m-i}] ;\, i = 1,\ldots,m \rbrace\). Proceeding recursively leads to the computation of the ODEs satisfied by the terms \(\lbrace \mathbb{E}[\lambda_t^{k}] ;\, k = 1,\ldots,m+n \rbrace\). However, since \((\lambda_t)_{t \geq 0}\) is autonomous and affine, see \citet{DuffieKan1996}, it follows from \eqref{eq:moment_lambda_q} that, for \(k \in \mathbb{N}\), the expectation \(\mathbb{E}[\lambda_t^k]\) satisfies the ODE:
\begin{align}
	\frac{d\mathbb{E}[\lambda_t^k]}{dt}&=\kappa\theta k\mathbb{E}[\lambda_{t}^{k-1}]-\kappa k \mathbb{E}[\lambda_{t}^{k}]+\frac{\sigma^{2}k(k-1)}{2}\mathbb{E}[\lambda_{t}^{k-1}], \label{eq:moment_lambda}
\end{align}
which can be computed by solving first the scalar ODEs for \(\lbrace \mathbb{E}[\lambda_t^j], j =1,\ldots, k-1\rbrace\). 

If we further assume that the process \((\lambda_t)_{t \geq 0}\) is stationary, \textit{i.e.} \(\frac{d\mathbb{E}[\lambda_t^k]}{dt}=0\), from \eqref{eq:moment_lambda} we deduce that 
\begin{align*}
	\mathbb{E}[\lambda_{\infty}^{p}]=\frac{\prod_{i=2}^{p}(\kappa\theta+\frac{\sigma^{2}}{2}(i-1))}{\kappa^{p-1}}\mathbb{E}[\lambda_{\infty}],
\end{align*}
while \(\mathbb{E}[\lambda_{\infty}]=\theta\).	These are the moments of a noncentral chi-squared distribution. Still using \eqref{eq:moment_lambda_q}, and assuming the moments \(\mathbb{E}[q_t^m]\) exist, we deduce 
\begin{align}
	\label{Eqn:q_long_term}
	\mathbb{E}[q_{\infty}]&=\frac{1}{\mu}\mathbb{E}[\lambda_{\infty}]=\frac{\theta}{\mu},\\
	\mathbb{E}[q_{\infty}^{2}]&=\frac{\mathbb{E}[\lambda_{\infty}q_{\infty}]}{\mu} + \frac{\mathbb{E}[\lambda_{\infty}]+\mu\mathbb{E}[q_{\infty}]}{2\mu} = \frac{\mathbb{E}[\lambda_{\infty}q_{\infty}]}{\mu} +  \frac{\theta}{\mu}, \nonumber
\end{align}		
and since 
\begin{align}\label{Eqn:lamdba_q_long_term}
	\mathbb{E}[\lambda_{\infty}q_{\infty}]=\frac{2\kappa^{2}\theta^{2}+2\kappa\theta^{2}\mu+\sigma^{2}\theta\mu}{2\kappa\mu(\kappa+\mu)},
\end{align}
we obtain 
\begin{align*}
	\mathbb{E}[q_{\infty}^{2}]&=\frac{\theta^2}{\mu^2} +  \frac{\theta}{\mu} + \frac{\theta}{\mu}\frac{\sigma^2}{2\kappa(\kappa+\mu)}.
\end{align*}		
The first two moments of \(q_\infty\) suggests that it can be approximated by a Poisson distribution with rate \(\theta/\mu\), a fact that will be made precise later.

As in \citet{DawPender2018}, since the model is affine, we could also compute the moments \((q_{t+s} - q_t)\) for \(s>0\), or the auto-covariance function \(s \to \mathbb{E}[q_{t+s}q_{t}]\). 

To ensure that the MGFs derived in this work are well defined, we will use the following well known result for the scalar Riccati equation.

\begin{proposition}\label{prop:Riccati_explosion}
Assume \((y(t))_{t\in \mathbb{R}_+}\) a function that satisfies the Riccati differential equation:
\begin{align}
y'(t)&=\frac{\sigma^2}{2}y(t)^2 -\kappa y(t) + c\,, \label{eq:Riccati_ode}\\
y(0) &=u,\, \label{eq:Riccati_initial_condition}
\end{align}
with \(\sigma>0\), \(\kappa>0\), \(u\in \mathbb{R}\) and \(c\in \mathbb{R}\) such that \(c\leq \kappa^2/(2\sigma^2)\). Then
\begin{align}
y(t) = - \frac{(c+u \nu_+) - (c+u \nu_-)e^{-\sqrt{\Delta}t} }{(\nu_{-}+u \sigma^2/2) - (\nu_{+}+u \sigma^2/2)e^{-\sqrt{\Delta}t}},  \label{eq:Riccati_solution}
\end{align}
with \(\Delta = \kappa^2 -2 \sigma^2 c\) and \(\nu_\pm =(-\kappa \pm \sqrt{\Delta})/2\) (with \(\nu_-<\nu_+\)). The explosion time \(t^*=\inf\lbrace t>0; y(t)=\infty\rbrace\), when it exists, is
\begin{equation}
t^* = \frac{1}{\sqrt{\Delta}} \ln\left(\frac{2\nu_+ + \sigma^2 u}{2\nu_- + \sigma^2 u} \right)\,. \label{eq:explosion_time_Riccati}
\end{equation}

There is no explosion whenever \( u \in ]-\infty,\, \frac{\kappa +\sqrt{\Delta}}{\sigma^2} [\).

\end{proposition}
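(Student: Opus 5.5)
The plan is to linearise the Riccati equation \eqref{eq:Riccati_ode} with the standard substitution
\[
y(t) = -\frac{2}{\sigma^2}\frac{\varphi'(t)}{\varphi(t)}.
\]
A direct computation turns \eqref{eq:Riccati_ode} into the second-order linear ODE
\[
\varphi'' + \kappa \varphi' + \frac{\sigma^2 c}{2}\varphi = 0,
\]
whose characteristic equation is \(\nu^2+\kappa\nu+\sigma^2 c/2=0\). Its roots are exactly \(\nu_\pm=(-\kappa\pm\sqrt{\Delta})/2\), and they are real because the hypothesis \(c\le \kappa^2/(2\sigma^2)\) is equivalent to \(\Delta\ge 0\).

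First assume \(\Delta>0\). Write \(\varphi(t)=Ae^{\nu_+t}+Be^{\nu_-t}\) with \(\varphi(0)=1\) and \(\varphi'(0)=-\sigma^2u/2\), so that \(y(0)=u\) as required by \eqref{eq:Riccati_initial_condition}. Solving gives
\[
A=-\frac{\nu_-+\sigma^2u/2}{\sqrt{\Delta}}, \qquad B=\frac{\nu_++\sigma^2u/2}{\sqrt{\Delta}}.
\]
Dividing numerator and denominator of \(-\tfrac{2}{\sigma^2}\varphi'/\varphi\) by \(e^{\nu_+t}\) produces the factor \(e^{(\nu_--\nu_+)t}=e^{-\sqrt{\Delta}t}\). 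Then the relations \(\nu_+\nu_-=\sigma^2c/2\) and \(\nu_++\nu_-=-\kappa\) rewrite the numerator coefficients \(-\tfrac{2}{\sigma^2}A\nu_+\) and \(-\tfrac{2}{\sigma^2}B\nu_-\) in terms of \(c+u\nu_\pm\). After clearing the common factor \(\sqrt{\Delta}\), this yields \eqref{eq:Riccati_solution}. As a safeguard I would also check directly that \eqref{eq:Riccati_solution} satisfies \eqref{eq:Riccati_ode} and \eqref{eq:Riccati_initial_condition}. Local Lipschitz uniqueness then identifies it with \(y\) on the maximal interval where its denominator does not vanish.

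For the explosion time, \(y\) blows up exactly when \(\varphi\) vanishes. On \(t>0\) this happens exactly when
\[
e^{\sqrt{\Delta}t}=-\frac{B}{A}=\frac{2\nu_++\sigma^2u}{2\nu_-+\sigma^2u},
\]
which gives \eqref{eq:explosion_time_Riccati}; the numerator of \(\varphi'\) is nonzero there, so \(|y|\to\infty\). It remains to decide when this equation has a solution \(t>0\), namely when the ratio exceeds \(1\). Set \(r_-=-2\nu_+/\sigma^2=(\kappa-\sqrt{\Delta})/\sigma^2\) and \(r_+=-2\nu_-/\sigma^2=(\kappa+\sqrt{\Delta})/\sigma^2\), which are the two roots of the right-hand side of \eqref{eq:Riccati_ode}. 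I would split into cases.
\begin{itemize}
\item If \(u<r_-\), both \(2\nu_\pm+\sigma^2u\) are negative, and \(2\nu_-+\sigma^2u\) is the more negative of the two, so the ratio lies in \((0,1)\).
\item If \(u=r_-\), the numerator vanishes, so \(y\equiv r_-\) is constant.
\item If \(r_-<u<r_+\), the numerator is positive and the denominator negative, so the ratio is negative.
\end{itemize}
In all three cases \(\varphi\) has no positive zero, so there is no explosion. If \(u=r_+\), then \(y\equiv r_+\). If \(u>r_+\), both terms are positive and the ratio exceeds \(1\), so \(t^*>0\) exists. Together these cases give the stated no-explosion region \(u<(\kappa+\sqrt{\Delta})/\sigma^2\). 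One can also note that \(y\) tends to \(+\infty\) in the exploding case, since \(y'>0\) above \(r_+\).

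The main obstacle is the degenerate case \(\Delta=0\) and the careful sign bookkeeping at the boundary values of \(u\). For \(\Delta=0\) I would take \(\varphi=(1+\beta t)e^{-\kappa t/2}\) and check that \eqref{eq:Riccati_solution} is recovered as the limit \(\Delta\to0\), interpreting \eqref{eq:explosion_time_Riccati} in the same limiting sense. Its zero \(t=-1/\beta\) is positive only if \(u>\kappa/\sigma^2=r_+\), which is consistent with the general statement.
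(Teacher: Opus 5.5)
Your proposal is correct and follows essentially the same route as the paper. It uses the substitution \(y=-\frac{2}{\sigma^2}\varphi'/\varphi\), the constant-coefficient linear ODE with roots \(\nu_\pm\), the constants fixed by \(y(0)=u\), the explosion located at the zero of the denominator, and the same sign analysis of \(2\nu_\pm+\sigma^2u\). Your extra treatment of the boundary values \(u=(\kappa\pm\sqrt{\Delta})/\sigma^2\), and of the degenerate case \(\Delta=0\) (where both displayed formulas become \(0/0\) and must be read as limits), fills in points the paper's proof passes over silently.
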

\begin{proof}
Write \(y(t) = -h'(t)/(\sigma^2h(t)/2)\), then \(h(t)\) solves the ODE \(h''(t) + \kappa h'(t) + (c\sigma^2/2)h(t)=0\). Looking for a solution of the form \(h(t)=e^{\nu t}\), we reach that \(h(t)=c_- e^{\nu_- t} + c_+ e^{\nu_+ t}\) where \(\nu_\pm \) and \(\Delta\) as specified above while \(c_-,\,c_+\) are two constants. Using \eqref{eq:Riccati_initial_condition} we obtain \(c_{-}/c_{+}=-(2\nu_+ + u\sigma^2)/(2\nu_- + u\sigma^2)\), and from there we reach \eqref{eq:Riccati_solution}. The explosion time is obtain by making the denominator of \eqref{eq:Riccati_solution} null, it leads to \eqref{eq:explosion_time_Riccati}. If \(2\nu_+ + \sigma^2 u>0\) and \(2\nu_- + \sigma^2 u>0\), or equivalently \(u>(\kappa-\sqrt{\Delta})/\sigma^2\)  and  \(u>(\kappa+\sqrt{\Delta})/\sigma^2\) but the former is satisfied if the latter is, there is explosion and the explosion time is given by \eqref{eq:explosion_time_Riccati}. If \(2\nu_+ + \sigma^2 u<0\) and \(2\nu_- + \sigma^2 u<0\) there should be an explosion but since \(\nu_+>\nu_-\), or equvalently \(2\nu_+ + \sigma^2 u>2\nu_- + \sigma^2 u\), then \(2\nu_+ + \sigma^2 u<0\) and \(2\nu_- + \sigma^2 u<0\) would mean \((2\nu_+ + \sigma^2 u)/(2\nu_- + \sigma^2 u)<1\) and therefore a negative explosion time, so we conclude that there is no explosion in this case. Lastly, if \(2\nu_+ + \sigma^2 u>0\) and \(2\nu_- + \sigma^2 u<0\), or equvalently \((\kappa -\sqrt{\Delta})/\sigma^2 < u < (\kappa +\sqrt{\Delta})/\sigma^2\), then there is no explosion.     
\end{proof}

%%%%%%%%%%%Corollary
\begin{corollary}\label{coro:Riccati_explosion_b}
Consider \((y(t))_{t \in \mathbb{R}_+}\) satisfying the same equation as in Proposition \ref{prop:Riccati_explosion}. Assume that \((b(t))_{t \in \mathbb{R}_+}\) is a function that satisfies the Riccati differential equation
\begin{align}
	b'(t)&=\frac{\sigma^2}{2}b(t)^2 -\kappa b(t) + c(t)\,, \label{eq:Riccati_ode_b}\\
	b(0) &=u,\, \label{eq:Riccati_initial_condition_b}
\end{align}
with \(\sigma>0\), \(\kappa>0\), \(u\in \mathbb{R}\) and \(c\in \mathbb{R}\), \(c \geq c(t)\). Then
\[
y(t)\geq b(t).
\]
\begin{proof}
	Let \(z(t)=y(t)-b(t)\).  Its derivative satisfies  \(z'(t)=y'(t)-b'(t)=A(t)z(t)+(c-c(t))\) with \(A(t)=\frac{\sigma^2}{2}(y(t)+b(t))-\kappa\) whose solution is \(z(t)=e^{\int_0^t A(s)ds}\big(z(0)+ \int_0^t (c-c(\tau)) e^{-\int_0^\tau A(s)ds}d\tau \big)\). Since \(z(0)=y(0)-b(0)=0\) and \(c \geq c(t)\), the integral is non‑negative, giving  \(z(t)=y(t)-b(t) \geq 0 \). Hence, the conclusion follows.
\end{proof}
\end{corollary}

%%%%%%%%%%%%%%%%%%%%%%%%%%%%%%%%%%%%%%%%%%%%%%%%%%%%%%%%%%%%%%%%%%%%%%%%%%%%%%%%%%%%%%%%%%%%%%%%%%%%%%%%%%%%%%%%%%%%%%%%%%%%%%%%%%%%%%%%%%%%%%%%%
%%%%%%%%%%%%%%%%%%%%%%%%%%%%%%%%%%%%%%%%%%%%%%%%%%%%%%%%%%%%%%%%%%%%%%%%%%%%%%%%%%%%%%%%%%%%%%%%%%%%%%%%%%%%%%%%%%%%%%%%%%%%%%%%%%%%%%%%%%%%%%%%%
\subsection{Moment-generating functions}
A very important property of infinite-server queues with an arrival rate driven by a Cox--BESQ process is that the MGF of \((\lambda_t, q_t, \int_0^t q_u \, du)\) is known explicitly. This enables many interesting applications, which we develop later in this work. This result relies on the affine property of the process and on the fact that the Riccati equation involved in the MGF is solvable, a feature that is not shared by all affine processes, in particular Hawkes processes, but is possessed by the proposed model.

\begin{theorem}\label{theo:mgf_triplet}
	Consider the joint MGF of  $\left(\lambda_T,q_{T},\int_t^{T} q_{s}ds\right)$ given for \(0\leq t < T\) and \(\mathbf{u}=(u_1,u_2,u_3)\in \mathbb{R}^3\) by
	\begin{align}
		f(t,T,\lambda_{t},q_{t})=\mathbb{E}_{t}\left[e^{u_{1}\lambda_{T} + u_{2}q_{T}+u_{3}\int_{t}^{T}q_{s}ds}\right], \label{eq:mgf_triplet_expectation}
	\end{align}
	then 
	\begin{align}
		f(t,T,\lambda_t,q_t)&=e^{a(T-t)+b(T-t)\lambda_{t}+d(T-t)q_{t}}, \label{eq:mgf_triplet} 
	\end{align}
	with
	\begin{align} 
		d(t)=\ln\left(e^{(u_{3}-\mu)t}e^{u_{2}}  -   \frac{\mu}{u_{3}-\mu}\left(1-e^{(u_{3}-\mu)t}\right)\right),\label{eq:sol_d_triplet}
	\end{align}
	and
	\begin{align} 
		b(t)=-\frac{2}{\sigma^2}\frac{\left(\frac{2}{\sigma^2} h_1'(0) + u_1h_1(0)\right)h_2'(t)- \left(\frac{2}{\sigma^2} h_2'(0) + u_1h_2(0)\right)h_1'(t)}{\left(\frac{2}{\sigma^2} h_1'(0) + u_1h_1(0)\right)h_2(t)- \left(\frac{2}{\sigma^2} h_2'(0) + u_1h_2(0)\right)h_1(t)}, \label{eq:sol_b_triplet}
	\end{align}
	where
	\begin{align}
		h_{1}(t)&= e^{\frac{-\kappa-\sqrt{\kappa^2+4\alpha}}{2} t}{}_0F_1\left(1+k_1, -\frac{\beta e^{(u_{3}-\mu)t}}{(\mu-u_{3})^{2}}\right), \label{eq:sol_h_1_triplet} \\
		h_{2}(t)&= e^{\frac{-\kappa+\sqrt{\kappa^2+4\alpha}}{2} t}{}_0F_1\left(1 - k_1, -\frac{\beta e^{(u_{3}-\mu)t}}{(\mu-u_{3})^{2}} \right), \label{eq:sol_h_2_triplet}
	\end{align}
	and
	\begin{align}
		h_{1}'(t)&=\frac{-\kappa-\sqrt{\kappa^{2}+4\alpha}}{2}h_{1}(t)+\frac{e^{\frac{-\kappa-\sqrt{\kappa^{2}+4\alpha}}{2}t}}{1+k_{1}}\frac{\beta e^{(u_{3}-\mu)t}}{|\mu-u_3|}{}_{0}F_{1}\left(2+k_{1},-\frac{\beta e^{(u_{3}-\mu)t}}{(\mu-u_{3})^{2}}\right), \label{eq:sol_h_1_triplet_prime} \\
		h_{2}'(t)&=\frac{-\kappa+\sqrt{\kappa^{2}+4\alpha}}{2}h_{2}(t)+\frac{e^{\frac{-\kappa+\sqrt{\kappa^{2}+4\alpha}}{2}t}}{1-k_{1}}\frac{\beta e^{(u_{3}-\mu)t}}{|\mu-u_{3}|}{}_{0}F_{1}\left(2-k_{1},-\frac{\beta e^{(u_{3}-\mu)t}}{(\mu-u_{3})^{2}}\right), \label{eq:sol_h_2_triplet_prime}
	\end{align}
	where
	\begin{align}
		k_{1}=\frac{\sqrt{\kappa^{2}+4\alpha}}{\mu-u_{3}}, \quad\alpha=\frac{\sigma^{2}}{2}\frac{u_{3}}{u_{3}-\mu}, \quad \beta=\frac{\sigma^{2}}{2}\left(e^{u_{2}}+\frac{\mu}{u_{3}-\mu}\right), \label{eq:k1_alpha_beta_triplet}
	\end{align}
	and \({}_0F_1(\nu,z)\) is the generalized hypergeometric function, see \citet[(16.2.1)]{DLMF}, while 
	\begin{equation}
		a(t)=-\frac{2\kappa \theta}{\sigma^2}\ln\left( \frac{\left(\frac{2}{\sigma^2} h_1'(0) + u_1h_1(0)\right)h_2(t)-\left(\frac{2}{\sigma^2} h_2'(0) + u_1h_2(0)\right)h_1(t)}{\left(\frac{2}{\sigma^2} h_1'(0) + u_1h_1(0)\right)h_2(0)-\left(\frac{2}{\sigma^2} h_2'(0) + u_1h_2(0)\right)h_1(0)}\right). \label{eq:sol_a_triplet}
	\end{equation}

	We may denote \(a(t,\mathbf{u})\), \(b(t,\mathbf{u})\), and \(d(t,\mathbf{u})\) when needed. Note that for \(\sqrt{\kappa^2+4\alpha}\) to be real, \(u_3\in ]-\infty,\; \mu/(1+2\sigma^2/\kappa^2)]\).
		
There exists a time \(t^*>0\) and positive thresholds \(u_1^*>0,\,u_2^*>0,\,u_3^*>0\), such that whenever \(u_1<u_1^*,\,u_2<u_2^*,\,u_3<u_3^*\) and \(t<t^*\) the MGF \eqref{eq:mgf_triplet} is well defined.
\end{theorem}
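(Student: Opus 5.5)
We take the exponential-affine ansatz \eqref{eq:mgf_triplet} and derive the Riccati system it must satisfy. The process \(M_s=f(s,T,\lambda_s,q_s)\,e^{u_3\int_t^s q_r dr}\) is a martingale on \([t,T]\), so by the Feynman--Kac argument behind \eqref{eq:PDE}, \(f\) solves
\[
\partial_s f+\mathcal{L}f+u_3 q f=0,\qquad f(T,T,\lambda,q)=e^{u_1\lambda+u_2 q}.
\]
We insert \(f=e^{a(\tau)+b(\tau)\lambda+d(\tau)q}\) with \(\tau=T-s\), use \eqref{eq:infinitesimal_generator}, and note that \(f(\lambda,q\pm1)-f=f(e^{\pm d}-1)\). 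Dividing by \(f\) and identifying the coefficients of \(1,\lambda,q\) gives
\begin{align*}
a'&=\kappa\theta b,\qquad a(0)=0,\\
b'&=\tfrac{\sigma^2}{2}b^2-\kappa b+(e^{d}-1),\qquad b(0)=u_1,\\
d'&=\mu(e^{-d}-1)+u_3,\qquad d(0)=u_2.
\end{align*}
The system is triangular: \(d\) is autonomous, \(b\) depends on \(d\), and \(a\) is a quadrature of \(b\).

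\emph{Step 1 (\(d\)).} Put \(g=e^{d}\). The \(d\)-equation becomes the linear ODE \(g'=(u_3-\mu)g+\mu\) with \(g(0)=e^{u_2}\). Its solution is
\[
g(\tau)=e^{(u_3-\mu)\tau}e^{u_2}-\frac{\mu}{u_3-\mu}\bigl(1-e^{(u_3-\mu)\tau}\bigr),
\]
which is \eqref{eq:sol_d_triplet}.

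\emph{Step 2 (\(b\)).} The forcing term is
\[
c(\tau)=e^{d}-1=\Bigl(e^{u_2}+\tfrac{\mu}{u_3-\mu}\Bigr)e^{(u_3-\mu)\tau}-\tfrac{u_3}{u_3-\mu}
=\tfrac{2}{\sigma^2}\bigl(\beta e^{(u_3-\mu)\tau}-\alpha\bigr).
\]
As in the proof of Proposition~\ref{prop:Riccati_explosion}, we linearise with \(b=-\tfrac{2}{\sigma^2}h'/h\). This gives
\[
h''+\kappa h'+\bigl(\beta e^{(u_3-\mu)\tau}-\alpha\bigr)h=0.
\]
Next set \(h=e^{\nu\tau}w(z)\) with \(z=-\beta e^{(u_3-\mu)\tau}/(\mu-u_3)^2\) and \(\nu^2+\kappa\nu-\alpha=0\), i.e. \(\nu=(-\kappa\pm\sqrt{\kappa^2+4\alpha})/2\). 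Then \(w\) satisfies Bessel's equation in the form \(zw''+(1\pm k_1)w'-w=0\), whose regular solution is \({}_0F_1(1\pm k_1;z)\). This yields \(h_1,h_2\) in \eqref{eq:sol_h_1_triplet}--\eqref{eq:sol_h_2_triplet}. The derivatives \eqref{eq:sol_h_1_triplet_prime}--\eqref{eq:sol_h_2_triplet_prime} follow from \(\tfrac{d}{dz}{}_0F_1(\nu;z)={}_0F_1(\nu+1;z)/\nu\) and the chain rule.

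The general solution is \(h=C_1h_1+C_2h_2\). The condition \(b(0)=u_1\) means \(\tfrac{2}{\sigma^2}h'(0)+u_1h(0)=0\), which fixes \(C_1:C_2\) and gives the ratio in \eqref{eq:sol_b_triplet}. Finally, \(a=\kappa\theta\int_0^\tau b=-\tfrac{2\kappa\theta}{\sigma^2}\ln\bigl(h(\tau)/h(0)\bigr)\), which is \eqref{eq:sol_a_triplet}. The constraint on \(u_3\) making \(\sqrt{\kappa^2+4\alpha}\) real comes from \(\alpha\ge-\kappa^2/4\).

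\emph{Step 3 (well-posedness), the main obstacle.} The ansatz is only justified while the solution stays finite. Then \(M_s\) is a positive local martingale on \([t,T]\), and we upgrade it to a true martingale by a standard localisation. Explosion can come only from \(b\); \(d\) is finite as long as \(g>0\), which holds for \(\tau\) small, or for all \(\tau\) when \(u_3<\mu\). To control \(b\) we use Corollary~\ref{coro:Riccati_explosion_b}. On a compact interval \([0,t^*]\) with \(g>0\) the forcing satisfies \(c(\tau)\le c:=\sup_{[0,t^*]}c(\tau)\), so \(b\le y\), where \(y\) solves the constant-coefficient Riccati equation \eqref{eq:Riccati_ode} with the same initial value \(u_1\).

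We choose \(u_2^*,u_3^*>0\) small enough that \(c\le\kappa^2/(2\sigma^2)\). Proposition~\ref{prop:Riccati_explosion} then gives no explosion for \(u_1<(\kappa+\sqrt{\Delta})/\sigma^2=:u_1^*\), and otherwise a positive explosion time \eqref{eq:explosion_time_Riccati}; this bounds \(t^*\). A lower bound on \(b\) is not needed for finiteness, since \(b'\ge-\kappa b+c(\tau)\) prevents blow-up to \(-\infty\).

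The delicate point is the verification that the local martingale is a true martingale, i.e. that the expectation really equals the formula and is not merely bounded by it. We would argue by monotone convergence for \(u\le0\) components and by analyticity of both sides in \(\mathbf{u}\) (extending from a neighbourhood of the origin) within the non-explosion region.
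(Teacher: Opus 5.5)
Your proposal is correct and follows essentially the same route as the paper. Both use Feynman--Kac with the exponential-affine ansatz, the triangular system for $(a,b,d)$, the linearisation $b=-\tfrac{2}{\sigma^2}h'/h$ reducing to the ${}_0F_1$ equation, the condition $b(0)=u_1$ fixing $C_1:C_2$, and non-explosion via comparison (Corollary~\ref{coro:Riccati_explosion_b}) with a constant-coefficient Riccati equation. There is one small sign slip: since $u_3-\mu=-(\mu-u_3)$, the root $\nu_+=(-\kappa+\sqrt{\kappa^2+4\alpha})/2$ pairs with $1-k_1$, not $1+k_1$, as in the paper's $h_2$. Your extra points, that $b$ cannot blow down to $-\infty$ and that the positive local martingale must be upgraded to a true martingale, go beyond the paper, which simply invokes Feynman--Kac and the upper comparison bound.
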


\begin{proof}
	Using Feynman-Kac's formula, we know that the function \( (t,\lambda,q) \in ([0,\, T] \times \mathbb{R}_+  \times \mathbb{N}) \to f(t,T,\lambda,q)\) of \eqref{eq:mgf_triplet_expectation} satisfies the PDE: 
	\begin{equation*}
	\partial_t f(t,x) + \mathcal{L}f(t,x) + u_3 q f(t,x) = 0,
	\end{equation*}
	with \(x=(\lambda,q)\) and terminal condition \(f(T,T,x)=e^{u_1 \lambda + u_2 q}\). We look for an (exponential) affine solution of the form \eqref{eq:mgf_triplet}. Inserting this form in the above PDE, we obtain that \(t \to a(t)\), \(t \to b(t)\), and \(t \to d(t)\) satisfy the following set of ODEs:
	\begin{align}
		a'(t) &= \kappa \theta b(t), \label{eq:ode_a_triplet}\\
		b'(t) &= \frac{\sigma^2}{2}b(t)^2 -\kappa b(t) + (e^{d(t)}-1), \label{eq:ode_b_triplet}\\
		d'(t) &= \mu (e^{-d(t)}-1)+u_{3}, \label{eq:ode_d_triplet}
	\end{align}
	with the initial conditions \( a(0)=0, b(0)=u_1, d(0)= u_{2} \). Solving \eqref{eq:ode_d_triplet} yields \eqref{eq:sol_d_triplet}, while for \eqref{eq:ode_b_triplet}, we look for a solution of the form \(b(t)=-\frac{h'(t)}{\frac{\sigma^{2}}{2}h(t)}\), which means that the Riccati equation \eqref{eq:ode_b_triplet} is solvable, see \citet{GrasselliTebaldi2008}, and obtain that the function \(t \to h(t)\) solves an ODE of the form \(h''(t)+\kappa h'(t) +(\beta e^{(u_{3}-\mu) t}-\alpha )h(t)=0\). We then look for a solution of the form \(h(t)=e^{c_0t}g(c_1e^{c_2 t})\), after computing the derivatives and using the ODE satisfied by \(h\), we reach
	\footnotesize
	\begin{align}
		c_{1}c_{2}^{2}e^{c_{0}t}e^{c_{2}t}\left((c_{1}e^{c_{2}t}g'')+g'\left(\frac{c_{0}}{c_{2}}+\frac{c_{0}+c_{2}}{c_{2}}+\frac{\kappa}{c_{2}}\right)+g\left(\frac{c_{0}^{2}e^{-c_{2}t}}{c_{1}c_{2}^{2}}+\frac{\kappa c_{0}e^{-c_{2}t}}{c_{1}c_{2}^{2}}-\frac{\alpha e^{-c_{2}t}}{c_{1}c_{2}^{2}}+\frac{\beta e^{(u_{3}-\mu)t}e^{-c_{2}t}}{c_{1}c_{2}^{2}}\right)\right)=0, 
		\label{eq:g_ode_triplet}
	\end{align}
	\normalsize 
	with the argument of \(g\) being \(c_1e^{c_2 t}\). We choose \(c_0\) such that \(c_0^2+\kappa c_0-\alpha=0\), it leads to two set of parameters:
	\begin{align*}
		c_0 &= \frac{-\kappa-\sqrt{\kappa^2 + 4\alpha}}{2}, \; c_2 = u_{3} -\mu, \; c_1=-\frac{\beta}{c_2^2},\\
		c_0 &= \frac{-\kappa+\sqrt{\kappa^2 + 4\alpha}}{2}, \; c_2 = u_{3}-\mu, \; c_1=-\frac{\beta}{c_2^2},
	\end{align*}
	both imply a coefficient for \(g\) in the ODE \eqref{eq:g_ode_triplet} of \(-1\) and in the first case the coefficient of \(g'\) is \(1+k_1\) where \(k_1\) is given by \eqref{eq:k1_alpha_beta_triplet} so that \(g\) satisfies the ODE \(zg''(z) + (1+k_1) g'(z) - g(z)=0\) whose solution is \eqref{eq:sol_h_1_triplet}. In the second case, the coefficient of \(g'\) is \(1-k_1\) so that \(g\) satisfies the ODE \(zg''(z) + (1-k_1) g'(z) - g(z)=0\) whose solution is \eqref{eq:sol_h_2_triplet}. Using the property of the function, we deduce that the derivatives of these functions are given by \eqref{eq:sol_h_1_triplet_prime}  and \eqref{eq:sol_h_2_triplet_prime}. As a result, the function \(b(t)\) is 
	\begin{align*}
		b(t) &=-\frac{2}{\sigma^2}\frac{ \bar{c}_1 h_1'(t)+ \bar{c}_2h_2'(t)}{\bar{c}_1 h_1(t)+ \bar{c}_2h_2(t)}, 
	\end{align*}

	for two constants \(\bar{c}_1,  \bar{c}_2\) and using the boundary condition \(b(0)=u_1\), we obtain that \(\frac{\bar{c}_1}{\bar{c}_2}=-\frac{\frac{2}{\sigma^2} h_2'(0) + u_1 h_2(0)}{\frac{2}{\sigma^2} h_1'(0) + u_1 h_1(0)}\). Inserting this condition in the above equation we deduce \eqref{eq:sol_b_triplet}. Integrating \eqref{eq:ode_a_triplet}, taking into account that \(b(t)\) is a logarithmic derivative, gives \eqref{eq:sol_a_triplet}. Note that \(\kappa^2+4\alpha>0\) with \(\alpha = \frac{\sigma^2}{2}\frac{u_{3}}{u_{3}-\mu}\) is satisfied if \(u_3 <\mu/(1+2\sigma^2/\kappa^2)\).
	
We look for \(t^*>0\) and positive constants \(u_1^*,\, u_2^*,\, u_3^*\), such that for \(u_1<u_1^*,\, u_2<u_2^*,\, u_3<u_3^*\) and \(t<t^*\) the MGF is well defined. Indeed, consider \(d(t)=\ln(g(u_2,u_3,t))\) with \(g(u_2,u_3,t)=e^{(u_{3}-\mu)t}e^{u_{2}}  -  \frac{\mu}{u_{3}-\mu}\left(1-e^{(u_{3}-\mu)t}\right)\). Define \(u_3^*=\mu\), then for \(u_3<u_3^*\) and \(t\geq 0 \), \(g(u_2,u_3,t)\) is positive so that \(d(t)\) of \eqref{eq:sol_d_triplet} is well defined. The function \(u_2 \to g(u_2,u_3,t)-1\) is increasing and there exist \(u_2^*>0\) and \(t^*>0\) such that for \(u_2<u_2^*\) and \(t<t^*\) (and \(u_3<u_3^*\)) we have \(g(u_2,u_3,t)-1<\kappa^2/(2\sigma^2)\). Denote \(c(t)= g(u_2,u_3,t)-1\), then \eqref{eq:ode_b_triplet} rewrites as \(b'(t)=\frac{\sigma^2}{2}b(t)^2 -\kappa b(t) + c(t)\). Define \(y_t\) solution of  \eqref{eq:Riccati_ode} with \(y(0) =u_1\) with \(c = \kappa^2/(2\sigma^2)\) and \(u_1<u_1^*:= \frac{\kappa +\sqrt{\Delta}}{\sigma^2}\) then for \(t<t^*\) the function \(y_t\) does not explode and from Proposition \ref{coro:Riccati_explosion_b} we conclude that for \(t<t^*\) the function \(b(t)\) also does not explode. Lastly, \(a(t)\) solving \eqref{eq:ode_a_triplet} is obtained by integration, as result for \(t<t^*\) the function \(a(t)\)  is also well defined. It leads to the announced result.	
\end{proof}
%%%%%%%%%%%%%%%%%%%%%%%%%%%%%%%%%%%%%%%%%%%%%%%%%%%%%%%%%%%%%%%%%%%%%%%%%%%%%%%%%%%%%%%%%%%%%%%%%%%%%%%%%%%%%%%%%%%%%%%%%%%%%%
%
% MGF of q_T
%
%%%%%%%%%%%%%%%%%%%%%%%%%%%%%%%%%%%%%%%%%%%%%%%%%%%%%%%%%%%%%%%%%%%%%%%%%%%%%%%%%%%%%%%%%%%%%%%%%%%%%%%%%%%%%%%%%%%%%%%%%%%%%%

The MGF of \(\lambda_t\) can be derived from Theorem \ref{theo:mgf_triplet}. It is well known and is reported here for convenience. This result can be found in the existing literature, such as Corollary 6.3.4.4 of \citet{JeanblancYorChesney2009}.
\begin{corollary}\label{coro:mgf_lambda}
	Denote for \(0\leq t < T\) and \(u \in \mathbb{R}\)
	\begin{align}
		f(t,T,\lambda_t)=\mathbb{E}_t\left[e^{u\lambda_T}\right], \label{eq:mgf_lambda_expectation}
	\end{align}
	the MGF of \(\lambda_T\) (conditional to time \(t\)) for a given \(T>0\). Then 
	\begin{align}
		f(t,T,\lambda_t)=e^{a(T-t)+ b(T-t)\lambda_t }, \label{eq:mgf_lambda}
	\end{align}
	with 
	\begin{align}
		d(t)&=0,\\
		b(t)&=\frac{u e^{-\kappa t}}{1-\frac{u\sigma^{2}}{2\kappa}(1-e^{-\kappa t})},\\
		a(t)&= -\frac{2\kappa\theta}{\sigma^{2}}\ln\left(1-\frac{u\sigma^{2}}{2\kappa}(1-e^{-\kappa t})\right).
	\end{align}
	For a given $u>0$, the explosion time \(t^*\) of \((\lambda_t)_{t \geq 0}\) is \(\frac{1}{-\kappa}\ln\left(1-\frac{2\kappa}{u\sigma ^2}\right).\)

	Note that \((a(t),b(t),d(t))\)  are such that \(a(t)=a(t,\mathbf{u})\), \(b(t)=b(t,\mathbf{u})\), \(d(t)=d(t,\mathbf{u})\), and \(\mathbf{u}=(u,0,0)\) with the right-hand side functions being those of Theorem \ref{theo:mgf_triplet}.	
\end{corollary}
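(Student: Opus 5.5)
To prove Corollary~\ref{coro:mgf_lambda}, I will specialise Theorem~\ref{theo:mgf_triplet} to \(\mathbf{u}=(u,0,0)\). Rather than taking limits in the hypergeometric representation, I will work directly with the system of ODEs \eqref{eq:ode_a_triplet}--\eqref{eq:ode_d_triplet} from the proof of that theorem.

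First, with \(u_2=u_3=0\), equation \eqref{eq:ode_d_triplet} becomes \(d'(t)=\mu(e^{-d(t)}-1)\) with \(d(0)=0\). Its unique solution is \(d\equiv 0\). This agrees with \eqref{eq:sol_d_triplet}: there \(g=e^{-\mu t}+(1-e^{-\mu t})=1\). As a result, the forcing term \(e^{d(t)}-1\) in \eqref{eq:ode_b_triplet} vanishes. We are left with the Riccati equation \(b'=\frac{\sigma^2}{2}b^2-\kappa b\), \(b(0)=u\), which is \eqref{eq:Riccati_ode} with \(c=0\).

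This is a Bernoulli equation, so I substitute \(w=1/b\). This gives the linear ODE \(w'=\kappa w-\sigma^2/2\), whose solution is \(w(t)=e^{\kappa t}\big(1/u-\frac{\sigma^2}{2\kappa}(1-e^{-\kappa t})\big)\). Inverting yields the stated \(b(t)\). As a cross-check, Proposition~\ref{prop:Riccati_explosion} with \(c=0\) gives \(\Delta=\kappa^2\), \(\nu_+=0\) and \(\nu_-=-\kappa\), and \eqref{eq:Riccati_solution} reduces to the same expression.

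Next I integrate \eqref{eq:ode_a_triplet}, \(a'=\kappa\theta b\) with \(a(0)=0\). Write \(D(t)=1-\frac{u\sigma^2}{2\kappa}(1-e^{-\kappa t})\). Then \(D'(t)=-\frac{u\sigma^2}{2}e^{-\kappa t}\), so \(b=-\frac{2}{\sigma^2}D'/D\). Hence \(a(t)=-\frac{2\kappa\theta}{\sigma^2}\ln D(t)\), since \(D(0)=1\). Substituting \(T-t\) into the affine form \eqref{eq:mgf_triplet} gives \eqref{eq:mgf_lambda}. Because the ODE solution is unique, these functions coincide with \(a(t,\mathbf{u})\), \(b(t,\mathbf{u})\) and \(d(t,\mathbf{u})\) of Theorem~\ref{theo:mgf_triplet} evaluated at \(\mathbf{u}=(u,0,0)\).

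The explosion time needs the most care. For \(u\le 0\), and for \(0<u<2\kappa/\sigma^2\), \(D(t)\) stays positive for all \(t\), so nothing blows up. This matches the no-explosion range \(u<(\kappa+\sqrt{\Delta})/\sigma^2=2\kappa/\sigma^2\) in Proposition~\ref{prop:Riccati_explosion}. For \(u>2\kappa/\sigma^2\), \(D\) vanishes when \(1-e^{-\kappa t}=\frac{2\kappa}{u\sigma^2}\), that is, at \(t^*=-\frac{1}{\kappa}\ln\big(1-\frac{2\kappa}{u\sigma^2}\big)\). I will verify that this agrees with \eqref{eq:explosion_time_Riccati}, which here reads \(\frac1\kappa\ln\frac{\sigma^2u}{\sigma^2u-2\kappa}\). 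I will also note explicitly that the stated formula for \(t^*\) is meaningful only when \(u>2\kappa/\sigma^2\).
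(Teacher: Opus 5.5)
Your proposal is correct and follows the route the paper indicates. The paper gives no separate proof, only the remark that the result follows from Theorem~\ref{theo:mgf_triplet} with \(\mathbf{u}=(u,0,0)\) together with a citation to Jeanblanc--Yor--Chesney. Reducing the ODE system to \(d\equiv 0\) and the Bernoulli equation \(b'=\frac{\sigma^2}{2}b^2-\kappa b\) carries out that specialization more cleanly than substituting \(\alpha=\beta=0\) into the hypergeometric formulas, which gives \(h_1=e^{-\kappa t}\), \(h_2=1\) and the same \(a,b\).

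Your observation that the stated \(t^*\) is meaningful only for \(u>2\kappa/\sigma^2\) correctly sharpens the statement. You should add that \(u=0\) (where \(b\equiv 0\) and the substitution \(w=1/b\) is not needed) and \(u=2\kappa/\sigma^2\) (where \(D(t)=e^{-\kappa t}>0\)) both belong to the no-explosion case.
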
	

The MGF of \(q_t\) for a given time \(t>0\) can be derived from Theorem \ref{theo:mgf_triplet}.
\begin{corollary}\label{coro:mgf_q}
	Assume that \(\kappa/\mu \notin \mathbb{N}\) and for \(0\leq t < T\) define the MGF 
	\begin{align}
		f(t,T,\lambda_t,q_t)=\mathbb{E}_t\left[e^{uq_T}\right], \label{eq:mgf_q_expectation}
	\end{align}
	then 
	\begin{align}
		f(t,T,\lambda_t,q_t)=e^{a(T-t)+ b(T-t)\lambda_t + d(T-t)q_t},\label{eq:mgf_q}
	\end{align}
	with 
	\begin{align}
		d(t)&=\ln(1+e^{-\mu t}(e^{u}-1)),   \label{eq:sol_d_q} \\
		b(t)&=-\frac{2}{\sigma^2}\frac{h_1'(0)h_2'(t)-h_2'(0)h_1'(t)}{h_1'(0)h_2(t)-h_2'(0)h_1(t)}, \label{eq:sol_b_q} \\
		a(t)&=-\frac{2\kappa \theta}{\sigma^2}\ln\left( \frac{h_1'(0)h_2(t)-h_2'(0)h_1(t)}{h_1'(0)h_2(0)-h_2'(0)h_1(0)}\right), \label{eq:sol_a_q}
	\end{align}
	where
	\begin{align}
		h_{1}(t)&= e^{-\kappa t}{}_0F_1\left(1+\kappa/\mu, -\frac{\beta}{\mu^2}e^{-\mu t}\right), \label{eq:sol_h_1_q}\\
		h_{2}(t)&= {}_0F_1\left(1-\kappa/\mu, -\frac{\beta}{\mu^2}e^{-\mu t}\right),\\ %\label{eq:sol_h_2_q}
		h_{1}'(t)&=-\kappa h_{1}(t) + \frac{e^{-(\kappa+\mu) t}}{(1+\kappa/\mu)}\frac{\beta}{\mu}{}_0F_1\left(2+ \kappa/\mu, -\frac{\beta}{\mu^2}e^{-\mu t}\right),\\  %\label{eq:sol_h_1_q_prime} \\
		h_{2}'(t)&=  \frac{e^{-\mu t}}{(1-\kappa/\mu)}\frac{\beta}{\mu} {}_0F_1\left(2-\kappa/\mu, -\frac{\beta}{\mu^2}e^{-\mu t}\right), %\label{eq:sol_h_2_q_prime}
	\end{align}
	and
	\begin{align}
		\beta=\frac{\sigma^2}{2}(e^{u}-1). \label{eq:beta_q}
	\end{align}
	
There exist \(t^*>0\) and \(u^*>0\) such that, whenever \(u<u^*\) and \(t<t^*\), the MGF \eqref{eq:mgf_q_expectation} is well defined.

Note that \((a(t),b(t),d(t))\) of (\ref{eq:sol_d_q}-\ref{eq:sol_a_q}) are such that \(a(t)=a(t,\mathbf{u})\), \(b(t)=b(t,\mathbf{u})\), \(d(t)=d(t,\mathbf{u})\), and \(\mathbf{u}=(0,u,0)\) with the right-hand side functions being those of Theorem \ref{theo:mgf_triplet}.
\end{corollary}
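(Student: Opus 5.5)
We obtain Corollary \ref{coro:mgf_q} by specialising Theorem \ref{theo:mgf_triplet} to \(\mathbf{u}=(0,u,0)\). Alternatively, we redo the Riccati reduction directly with \(u_1=u_3=0\).

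\emph{Affine form and ODEs.} The expectation \eqref{eq:mgf_q_expectation} is the special case \(u_1=u_3=0\) of \eqref{eq:mgf_triplet_expectation}. Hence \(f\) has the exponential-affine form \eqref{eq:mgf_q}, and \((a,b,d)\) solve \eqref{eq:ode_a_triplet}--\eqref{eq:ode_d_triplet} with \(u_3=0\) and initial conditions \(a(0)=0\), \(b(0)=0\), \(d(0)=u\).

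\emph{The function \(d\).} Putting \(u_3=0\) in \eqref{eq:sol_d_triplet} gives \(d(t)=\ln(e^{-\mu t}e^u+1-e^{-\mu t})=\ln(1+e^{-\mu t}(e^u-1))\), which is \eqref{eq:sol_d_q}. The argument of the logarithm is a convex combination of \(e^u>0\) and \(1\), so it is positive for every \(u\) and \(t\geq0\). Consequently the forcing term of \eqref{eq:ode_b_triplet} is \(e^{d(t)}-1=(e^u-1)e^{-\mu t}\).

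\emph{The functions \(h_1,h_2\) and \(b\).} With \(u_3=0\), \eqref{eq:k1_alpha_beta_triplet} gives \(\alpha=0\), \(\sqrt{\kappa^2+4\alpha}=\kappa\), \(k_1=\kappa/\mu\), and \(\beta=\frac{\sigma^2}{2}(e^u-1)\), which is \eqref{eq:beta_q}. The exponents in \eqref{eq:sol_h_1_triplet}--\eqref{eq:sol_h_2_triplet} become \(-\kappa\) and \(0\), and the hypergeometric argument becomes \(-\beta e^{-\mu t}/\mu^2\). This yields the stated \(h_1,h_2\). In the derivative formulas \eqref{eq:sol_h_1_triplet_prime}--\eqref{eq:sol_h_2_triplet_prime} we have \(|\mu-u_3|=\mu\), which gives the stated \(h_1',h_2'\); in particular \(h_2'\) has no term proportional to \(h_2\). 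The derivatives can be checked directly from \(\frac{d}{dz}{}_0F_1(c;z)={}_0F_1(c+1;z)/c\) and the chain rule. Since \(u_1=0\), the factors \(\frac{2}{\sigma^2}h_i'(0)+u_1h_i(0)\) reduce to \(\frac{2}{\sigma^2}h_i'(0)\). The common constant \(2/\sigma^2\) cancels in numerator and denominator of \eqref{eq:sol_b_triplet} and inside the logarithmic ratio of \eqref{eq:sol_a_triplet}, which gives \eqref{eq:sol_b_q} and \eqref{eq:sol_a_q}.

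\emph{Role of \(\kappa/\mu\notin\mathbb{N}\).} This hypothesis ensures that \(1-\kappa/\mu\) and \(2-\kappa/\mu\) are not non-positive integers, so that \({}_0F_1(1-\kappa/\mu;\cdot)\) and \({}_0F_1(2-\kappa/\mu;\cdot)\) are defined. It also makes \(h_1\) and \(h_2\) linearly independent solutions of \(h''+\kappa h'+\beta e^{-\mu t}h=0\): their leading behaviours as \(t\to\infty\) are \(e^{-\kappa t}\) and \(1\) respectively. If \(\kappa/\mu\) were an integer, the Frobenius exponents would differ by an integer and a logarithmic second solution would be needed.

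\emph{The main obstacle: showing the denominator \(h_1'(0)h_2(0)-h_2'(0)h_1(0)\) is nonzero.} This quantity is the Wronskian of \(h_1,h_2\) at \(0\), up to sign. By Abel's identity, \(W(t)=W(0)e^{-\kappa t}\), and \(W\) is nonzero exactly when the two solutions are independent, which the previous step gives. The normalisation in \eqref{eq:sol_a_q} then makes \(a(0)=0\), and \(b(0)=0\) holds because the numerator of \eqref{eq:sol_b_q} vanishes at \(t=0\).

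\emph{Well-definedness.} We reuse the final argument of Theorem \ref{theo:mgf_triplet}. Set \(c(t)=(e^u-1)e^{-\mu t}\). For \(u\leq0\) we have \(c(t)\leq0\). For \(u>0\) we have \(c(t)\leq e^u-1\), and this is at most \(\kappa^2/(2\sigma^2)\) as soon as \(u<u^*:=\ln(1+\kappa^2/(2\sigma^2))\). Take \(c=\kappa^2/(2\sigma^2)\) and initial value \(0\) in Proposition \ref{prop:Riccati_explosion}. Then \(\Delta=0\) and \(\nu_+=\nu_-=-\kappa/2\), and \(0<(\kappa+\sqrt{\Delta})/\sigma^2\), so the comparison solution \(y\) does not explode. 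Corollary \ref{coro:Riccati_explosion_b} then bounds \(b\) above by \(y\). This gives a non-explosive \(b\) on some \([0,t^*)\), and \(a\) follows by integration.
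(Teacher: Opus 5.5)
Your proposal is correct and follows essentially the same route as the paper. You specialise Theorem~\ref{theo:mgf_triplet} to $\mathbf{u}=(0,u,0)$, which gives $\alpha=0$, $k_1=\kappa/\mu$, $\beta=\frac{\sigma^2}{2}(e^u-1)$ and lets $2/\sigma^2$ cancel, and you then rule out explosion of $b$ by comparison (Corollary~\ref{coro:Riccati_explosion_b}) with the Riccati equation with $c=\kappa^2/(2\sigma^2)$ and $y(0)=0$. Your explicit, $t$-uniform threshold $u^*=\ln(1+\kappa^2/(2\sigma^2))$ and your Abel--Wronskian argument for $h_1'(0)h_2(0)-h_2'(0)h_1(0)\neq 0$ are refinements the paper leaves implicit. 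One small point: at $\Delta=0$ the formula of Proposition~\ref{prop:Riccati_explosion} degenerates to $0/0$, so it is cleaner to solve $y'=\frac{\sigma^2}{2}(y-\kappa/\sigma^2)^2$, $y(0)=0$, directly, which gives the bounded solution $y(t)=\frac{\kappa}{\sigma^2}\bigl(1-\frac{1}{1+\kappa t/2}\bigr)$.
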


\begin{proof}%[Proof of Proposition \ref{prop:mgf_q}]
	Setting \(u_1=0\), \(u_2=u\) and \(u_3=0\) in Theorem \ref{theo:mgf_triplet} we obtain that \(k_1=\kappa/\mu\), \(\alpha=0\) and \(\beta=\frac{\sigma^2}{2}(e^{u}-1)\), and we obtain the above equations after simplifications.
	Consider \(d(t)=\ln(g(u,t))\) with \(g(u,t)=1+e^{-\mu t}(e^{u}-1)\). For \(t\geq 0 \) and \(u\in \mathbb{R}\), the function \(g(u,t)\) is positive so that \(d(t)\) in \eqref{eq:sol_d_q} is well defined. Since \(u \to g(u,t)-1\) is increasing, there exists \(u^*>0\) and \(t^*>0\) such that \(g(u,t)-1<\kappa^2/(2\sigma^2)\) for \(u<u^*\) and \(t<t^*\). Denote \(c(t)= g(u,t)-1\), then \eqref{eq:ode_b_triplet} becomes  \(b'(t)=\frac{\sigma^2}{2}b(t)^2 -\kappa b(t) + c(t)\). Let \(y_t\) be the solution of  \eqref{eq:Riccati_ode} with \(y(0) =0\) and \(c = \kappa^2/(2\sigma^2)\). For \(t<t^*\), the function \(y_t\) does not explode, and by Proposition \ref{coro:Riccati_explosion_b}, the same holds for \(b(t)\). Finally, \(a(t)\), obtained by integrating \eqref{eq:ode_a_triplet}, is also well defined. Hence, the claimed result follows.
\end{proof}

\begin{corollary}\label{coro:q_infty}
	Assume that \(\kappa/\mu \notin \mathbb{N}\) and for \(0\leq t < T\) define the MGF
	\begin{align}
		f_\infty(u)=\lim_{T\to \infty}\mathbb{E}_t\left[e^{uq_T}\right], \label{eq:mgf_q_infty}
	\end{align}
	then 
	\begin{align}
		f_\infty(u)=e^{a(\infty)},\label{eq:mgf_q_infty_sol}
	\end{align}
	with 
	\begin{align}
		a(\infty)=\frac{2\kappa \theta}{\sigma^2}\ln\left( h_2(0) -  \frac{h_2'(0)h_1(0)}{h_1'(0)}\right), \label{eq:sol_a_infty}
	\end{align}
	where \(h_1(0), h_1'(0), h_2(0)\) and \(h_2'(0)\) are given by Corollary \ref{coro:mgf_q}. A series expansion of  \eqref{eq:sol_a_infty} with respect to \(\beta\) gives \(a(\infty)=\frac{\theta}{\mu}(e^{u}-1) + O(\beta^2)\). 
\end{corollary}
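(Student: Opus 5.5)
The plan is to start from Corollary \ref{coro:mgf_q} and let \(\tau = T-t\to\infty\) in
\[
\mathbb{E}_t[e^{uq_T}] = e^{a(\tau)+b(\tau)\lambda_t+d(\tau)q_t}.
\]
It then suffices to show three limits: \(d(\tau)\to 0\), \(b(\tau)\to 0\), and \(a(\tau)\) converges to the expression \eqref{eq:sol_a_infty}. The first is immediate from \eqref{eq:sol_d_q}, since \(e^{-\mu\tau}\to0\) gives \(d(\tau)=\ln(1+e^{-\mu\tau}(e^u-1))\to 0\).

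For \(a\) and \(b\), I will read off the large-\(\tau\) behaviour of \(h_1,h_2\) and their derivatives from the series \({}_0F_1(\nu,z)=1+z/\nu+O(z^2)\) with \(z=-\beta e^{-\mu\tau}/\mu^2\to 0\). This is where \(\kappa/\mu\notin\mathbb{N}\) is used: it makes \({}_0F_1(1-\kappa/\mu,\cdot)\) and \({}_0F_1(2-\kappa/\mu,\cdot)\) well defined. The expansions are:
\begin{align*}
h_1(\tau)&=e^{-\kappa\tau}(1+O(e^{-\mu\tau})), & h_1'(\tau)&=-\kappa e^{-\kappa\tau}(1+O(e^{-\mu\tau})),\\
h_2(\tau)&=1+O(e^{-\mu\tau}), & h_2'(\tau)&=O(e^{-\mu\tau}).
\end{align*}
Substituting into \eqref{eq:sol_b_q}, the numerator \(h_1'(0)h_2'(\tau)-h_2'(0)h_1'(\tau)\) is \(O(e^{-\mu\tau}+e^{-\kappa\tau})\). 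The denominator \(h_1'(0)h_2(\tau)-h_2'(0)h_1(\tau)\) tends to \(h_1'(0)\). Hence \(b(\tau)\to0\), provided \(h_1'(0)\neq 0\). The same expansions in \eqref{eq:sol_a_q} give
\[
a(\tau)\to -\frac{2\kappa\theta}{\sigma^2}\ln\left(\frac{h_1'(0)}{h_1'(0)h_2(0)-h_2'(0)h_1(0)}\right)=\frac{2\kappa\theta}{\sigma^2}\ln\left(h_2(0)-\frac{h_2'(0)h_1(0)}{h_1'(0)}\right),
\]
which is \eqref{eq:sol_a_infty}. Since \(b,d\to0\), the limit no longer depends on \(\lambda_t,q_t\), which yields \eqref{eq:mgf_q_infty_sol}.

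The step I expect to be the main obstacle is justifying that the limit is legitimate for the range of \(u\) considered. Two things are needed: no explosion of \(b\) on all of \([0,\infty)\), not just on \([0,t^*)\), and \(h_1'(0)\neq0\) together with positivity of the argument of the logarithm. I would first try the comparison argument of Corollary \ref{coro:Riccati_explosion_b} with the constant bound \(c=\max(0,e^u-1)\) on \(c(\tau)=e^{d(\tau)}-1=e^{-\mu\tau}(e^u-1)\). For \(u\) small enough that \(e^u-1<\kappa^2/(2\sigma^2)\), and initial value \(0<(\kappa+\sqrt\Delta)/\sigma^2\), Proposition \ref{prop:Riccati_explosion} then gives a global solution. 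For \(u\le0\), \(c(\tau)\le 0\) and the bound with \(c=0\) suffices. For small \(|\beta|\), continuity in \(\beta\) of the hypergeometric terms gives \(h_1'(0)\approx-\kappa\neq0\), which settles the remaining nondegeneracy.

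Finally, for the expansion in \(\beta\), I will write \(h_1(0)=1+O(\beta)\), \(h_2(0)=1-\frac{\beta}{\mu(\mu-\kappa)}+O(\beta^2)\), \(h_2'(0)=\frac{\beta}{\mu-\kappa}+O(\beta^2)\) and \(h_1'(0)=-\kappa+O(\beta)\). This gives
\[
h_2(0)-\frac{h_2'(0)h_1(0)}{h_1'(0)}=1+\frac{\beta}{\mu-\kappa}\Big(\frac1\kappa-\frac1\mu\Big)+O(\beta^2)=1+\frac{\beta}{\kappa\mu}+O(\beta^2).
\]
Taking the logarithm and multiplying by \(2\kappa\theta/\sigma^2\), with \(\beta=\frac{\sigma^2}{2}(e^u-1)\), gives \(\frac{\theta}{\mu}(e^u-1)+O(\beta^2)\), the Poisson\((\theta/\mu)\) log-MGF.
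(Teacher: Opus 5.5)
Your proposal is correct and follows essentially the same route as the paper. You pass to the limit $T\to\infty$ in the affine representation of Corollary~\ref{coro:mgf_q}, use ${}_0F_1(\nu,z)\to1$ as $z\to0$ to get $d,b\to0$ and $a(\tau)\to a(\infty)$, rule out explosion of $b$ by comparison with a constant-coefficient Riccati equation, and your first-order $\beta$-expansion checks out. Your version is in fact tidier: the comparison constant $\max(0,e^u-1)$ dominates $e^{d(\tau)}-1$ for every $\tau\geq0$, whereas the paper's choice $c=0$ is only valid for large $\tau$ or $u\leq0$, and you make explicit both the nondegeneracy $h_1'(0)\neq0$ and the $\beta$-expansion that the paper leaves as straightforward.
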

\begin{proof}[Proof of Corollary \ref{coro:q_infty}]

	Since \(\mu>0\) then \(\lim_{t \to +\infty}d(t)=0\) in \eqref{eq:sol_d_q}. Using the expression \eqref{eq:sol_h_1_q}, since \(\kappa>0\), \(\mu>0\) and \(\lim_{z\to 0}{}_0F_1(.,z)=1\) we obtain that \(\lim_{t\to +\infty}h_1(t)=0\), \(\lim_{t\to +\infty}h_1'(t)=0\), \(\lim_{t\to +\infty}h_2'(t)=0\) and \(\lim_{t\to +\infty}h_2(t)=1\). As a result,  \(\lim_{t\to +\infty }b(t)=0\). Still using these limits, from \eqref{eq:sol_a_q} we reach that \(\lim_{t\to +\infty}a(t)\) is well defined and equal to \eqref{eq:sol_a_infty}. 
	
	To clarify that there is not explosion, since \(a(t)\) is obtained by integrating the function \(b(t)\) over an infinite interval (see \eqref{eq:ode_a_triplet}), let us consider \(d(t)=\ln(g(u,t))\) with \(g(u,t)=1+e^{-\mu t}(e^{u}-1)\). For \(t\to \infty \) and \(u\in \mathbb{R}\), the function \(g(u,t)\) tends to \(1\) so that \(d(t)\) in \eqref{eq:sol_d_q} is well defined when \(t\to +\infty\), and \(\lim_{t\to \infty}e^{d(t)}-1 \to 0\). Hence, \eqref{eq:ode_b_triplet} becomes  \(b'(t)=\frac{\sigma^2}{2}b(t)^2 -\kappa b(t)\) for \(t\) large, and the function \(b(t)\) does not explode by Corollary \ref{coro:Riccati_explosion_b}. What is more, taking in Proposition \ref{prop:Riccati_explosion} the constant \(c=0\) in \eqref{eq:Riccati_solution}, we reach that there exists \(u>0\) and \(c_1>0\) such that \(y(t) \sim c_1 e^{-\kappa t} \quad t \to +\infty\) and therefore it is integrable (with respect to \(t\)) over \(\mathbb{R}_+\), so is \(b(t)\), and therefore \(\lim_{t\to +\infty}a(t)\) is well defined and is equal to \eqref{eq:sol_a_infty}.
		
	The last statement is straightforward.
\end{proof}
\begin{remark}
	The last part of Corollary \ref{coro:q_infty} implies that \(q_\infty\) has a distribution close to a Poisson distribution with rate \(\frac{\theta}{\mu}\).
\end{remark}

\begin{corollary}\label{coro:mgf_integral_q}
	Consider the MGF of \(\int_t^Tq_sds\) with \(0\leq t < T\) so that in Theorem \ref{theo:mgf_triplet} the following equality holds \(\beta=\alpha= \frac{\sigma^2}{2}\frac{u}{u-\mu}\) and assume \(\mathbb{R} \ni u<\mu/(1+2\sigma^2/\kappa^2)\). Then
	\begin{align}
		f(t,T,\lambda_t,q_t)&=\mathbb{E}_t\left[e^{u\int_t^Tq_s ds}\right]=e^{a(T-t)+ b(T-t)\lambda_t + d(T-t)q_t},\label{eq:mgf_integral_q}
	\end{align}
	with 
	\begin{align}
		d(t)=\ln\left(1- \frac{u}{u-\mu}(1-e^{(u-\mu) t})\right), 
		\label{eq:sol_d_integral_q}
	\end{align}
	
	\begin{align}
		b(t)=-\frac{2}{\sigma^2}\frac{h_1'(0)h_2'(t)-h_2'(0)h_1'(t)}{h_1'(0)h_2(t)-h_2'(0)h_1(t)}, \label{eq:sol_b_integral_q}
	\end{align}
	where
	\begin{align}
		h_{1}(t)&= e^{\frac{-\kappa-\sqrt{\kappa^2+4\alpha}}{2} t}{}_0F_1\left(1+k_1, -\frac{\alpha e^{(u-\mu)t}}{(\mu-u)^{2}}\right), \label{eq:sol_h_1_integral_q}\\
		h_{2}(t)&= e^{\frac{-\kappa+\sqrt{\kappa^2+4\alpha}}{2} t}{}_0F_1\left(1 - k_1, -\frac{\alpha e^{(u-\mu)t}}{(\mu-u)^{2}}\right), \label{eq:sol_h_2_integral_q}
	\end{align}
	with \(k_1=\frac{\sqrt{\kappa^2 + 4\alpha}}{\mu-u}\). The derivatives of these functions are given by
	\begin{align}
		h_{1}'(t)&=\frac{-\kappa-\sqrt{\kappa^2+4\alpha}}{2}h_{1}(t)  + \frac{e^{\frac{-\kappa-\sqrt{\kappa^2+4\alpha}}{2} t}}{1+k_1} \frac{\alpha e^{(u-\mu)t}}{|\mu-u|}{}_0F_1\left(2+k_1, -\frac{\alpha e^{(u-\mu)t}}{(\mu-u)^{2}}\right), \label{eq:sol_h_1_integral_q_prime} \\
		h_{2}'(t)&= \frac{-\kappa+\sqrt{\kappa^2+4\alpha}}{2}h_{2}(t)  + \frac{e^{\frac{-\kappa+\sqrt{\kappa^2+4\alpha}}{2} t}}{1-k_1} \frac{\alpha e^{(u-\mu)t}}{|\mu-u|}{}_0F_1\left(2-k_1, -\frac{\alpha e^{(u-\mu)t}}{(\mu-u)^{2}}\right), \label{eq:sol_h_2_integral_q_prime}
	\end{align}
	while
	\begin{align}
		a(t)=-\frac{2\kappa \theta}{\sigma^2}\ln\left( \frac{h_1'(0)h_2(t)-h_2'(0)h_1(t)}{h_1'(0)h_2(0)-h_2'(0)h_1(0)}\right). \label{eq:sol_a_integral_q}
	\end{align}

	There exist  \(t^*>0\) and  \(u^*>0\) such that, whenever \(u<u^*\) and \(t<t^*\), the MGF \eqref{eq:mgf_integral_q} is well defined.
\end{corollary}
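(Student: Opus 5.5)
This corollary is a direct specialisation of Theorem \ref{theo:mgf_triplet}, in the same way as Corollary \ref{coro:mgf_q}. I set \(\mathbf{u}=(0,0,u)\), i.e. \(u_1=0\), \(u_2=0\), \(u_3=u\).

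\textbf{Step 1: specialise the constants.} With \(e^{u_2}=1\), the constant of \eqref{eq:k1_alpha_beta_triplet} becomes
\[
\beta=\frac{\sigma^2}{2}\Bigl(1+\frac{\mu}{u-\mu}\Bigr)=\frac{\sigma^2}{2}\frac{u}{u-\mu}=\alpha,
\]
which is the announced equality \(\beta=\alpha\). Also \(k_1=\sqrt{\kappa^2+4\alpha}/(\mu-u)\).

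\textbf{Step 2: specialise the three coefficient functions.}
\begin{itemize}
\item For \(d\): in \eqref{eq:sol_d_triplet}, write \(e^{(u-\mu)t}-\frac{\mu}{u-\mu}(1-e^{(u-\mu)t})=1-\frac{u}{u-\mu}(1-e^{(u-\mu)t})\). This is a one-line regrouping and yields \eqref{eq:sol_d_integral_q}.
\item For \(h_1,h_2\) and their derivatives: substituting \(\beta=\alpha\) and \(u_3=u\) into \eqref{eq:sol_h_1_triplet}--\eqref{eq:sol_h_2_triplet_prime} gives \eqref{eq:sol_h_1_integral_q}--\eqref{eq:sol_h_2_integral_q_prime} verbatim.
\item For \(b\) and \(a\): setting \(u_1=0\) in \eqref{eq:sol_b_triplet} and \eqref{eq:sol_a_triplet} leaves the common factor \(2/\sigma^2\) in both numerator and denominator. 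It cancels, giving \eqref{eq:sol_b_integral_q} and \eqref{eq:sol_a_integral_q}.
\end{itemize}

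\textbf{Step 3: realness condition.} The assumption \(u<\mu/(1+2\sigma^2/\kappa^2)\) is exactly the condition in Theorem \ref{theo:mgf_triplet} guaranteeing \(\kappa^2+4\alpha>0\), so \(k_1\) and the exponents are real. It also gives \(u<\mu\), so \(\mu-u>0\) and \(|\mu-u|=\mu-u\).

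\textbf{Step 4: well-definedness.} I repeat the comparison argument used in the theorem.

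Take \(u^*\le\mu/(1+2\sigma^2/\kappa^2)<\mu\) and put \(g(u,t)=1-\frac{u}{u-\mu}(1-e^{(u-\mu)t})\).

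For \(u<\mu\) and \(t\ge0\), \(g>0\):
\begin{itemize}
\item if \(u\le 0\), then \(\frac{u}{u-\mu}\ge0\) and \(1-e^{(u-\mu)t}\in[0,1)\), so \(g\ge 1-\frac{u}{u-\mu}=\frac{-\mu}{u-\mu}>0\);
\item if \(0<u<\mu\), then \(\frac{u}{u-\mu}<0\), so \(g\ge1\).
\end{itemize}
Hence \(d\) is well defined.

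Next set \(c(t)=g(u,t)-1=\frac{u}{\mu-u}(1-e^{-(\mu-u)t})\). This is continuous, increasing in \(u\), and vanishes at \(t=0\). So by shrinking \(u^*\) (or restricting \(t<t^*\)) we get \(c(t)\le\kappa^2/(2\sigma^2)\); indeed \(c(t)\le u/(\mu-u)\) uniformly in \(t\), so a small enough \(u^*\) works for all \(t\).

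Then compare \(b\), solving \eqref{eq:ode_b_triplet} with \(b(0)=0\), to \(y\) solving \eqref{eq:Riccati_ode} with \(c=\kappa^2/(2\sigma^2)\) and \(y(0)=0\). Here \(\Delta=0\), and \(0<\kappa/\sigma^2\), so by Proposition \ref{prop:Riccati_explosion} \(y\) does not explode. Corollary \ref{coro:Riccati_explosion_b} gives \(b\le y\). A lower bound is also needed: comparing with \(c=\min c(t)\ge -1\) in the reverse direction gives a non-exploding lower solution, because from \(b(0)=0\) the Riccati flow cannot go to \(-\infty\) in finite time when \(\sigma^2/2>0\). So \(b\) stays finite, and \(a=\kappa\theta\int b\) is finite.

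\textbf{Main obstacle.} The only non-mechanical point is the finiteness and nonvanishing of the denominator \(h_1'(0)h_2(t)-h_2'(0)h_1(t)\). In addition, when \(k_1\in\mathbb{N}\), the function \({}_0F_1(1-k_1,\cdot)\) is undefined. I would add the analogue of the assumption \(\kappa/\mu\notin\mathbb{N}\) from Corollary \ref{coro:mgf_q}, or invoke the comparison argument so that the ODE solution exists regardless of the particular hypergeometric representation.
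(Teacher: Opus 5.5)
Your proposal is correct and follows the paper's route. You specialise Theorem~\ref{theo:mgf_triplet} at \(\mathbf{u}=(0,0,u)\), so that \(\beta=\alpha\) and the factor \(2/\sigma^2\) cancels when \(u_1=0\), and then rule out explosion of \(b\), hence of \(a\), by the Riccati comparison with constant \(c=\kappa^2/(2\sigma^2)\). Your extra points are refinements that the paper's proof leaves implicit or omits: the uniform-in-\(t\) bound \(c(t)\le\max\{0,u/(\mu-u)\}\) (for \(u<0\) the correct bound is \(c(t)\le 0\), not \(c(t)\le u/(\mu-u)\)), which lies below \(\kappa^2/(2\sigma^2)\) whenever \(u<\mu/(1+2\sigma^2/\kappa^2)\), so no restriction on \(t\) is needed; the lower comparison excluding \(b\to-\infty\); and the caveat that \({}_0F_1(1-k_1,\cdot)\) is undefined for \(k_1\in\mathbb{N}\).
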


\begin{proof}\label{proof:q_infty}
	Let \(g(u,t)=1- \frac{u}{u-\mu}(1-e^{(u-\mu) t})\). Set \(u^*=\mu\). For \(u<u^*\) and \(t\geq 0 \), the function \(g(u,t)\) is positive thus \(d(t)=\ln(g(u,t))\) of \eqref{eq:sol_d_integral_q} is well defined. Moreover, \(u \to g(u,t)-1\) is increasing and there exist \(u^*>0\) and \(t^*>0\) such that \(c(t):= g(u,t)-1<\kappa^2/(2\sigma^2)\)  for \(u<u^*\) and \(t<t^*\). With this \(c(t)\), equation \eqref{eq:sol_b_integral_q} takes the Riccati form \(b'(t)=\frac{\sigma^2}{2}b(t)^2 -\kappa b(t) + c(t)\). The function \(c(t)\) satisfies the same bound as in Proposition \ref{coro:Riccati_explosion_b}, the argument of the proposition (using a comparison Riccati equation with a constant \(c=\kappa^2/(2\sigma^2)\)) shows that \(b(t)\) dose not explode. Consequently the integral defining \(a(t)\) also converges.	  
\end{proof}
\begin{remark}
From a numerical point of view, the MGF \eqref{eq:mgf_integral_q} is more tedious to manipulate since its argument is involved in the index of the special function \({}_0F_1\).
\end{remark}

\subsection{Moment-generating functions at two dates}
In the previous section, the joint MGF of \((\lambda_t, q_t, \int_0^{t} q_s ds)\) was obtained for a given date \(t > 0\). Therefore, the distribution of this random vector is known through the inverse Fourier transform of its characteristic function, and the expectation of a function of this vector can also be determined, provided that the Fourier transform of the function is known. The affine property enables much more than this single-date analysis. Indeed, the joint MGF of this vector at two distinct dates, and possibly at more dates, as the proof will show, can also be determined explicitly, as demonstrated by the following proposition.

\begin{proposition} \label{prop:two_dates_mgf}	
	Assume that \(\kappa/\mu \notin \mathbb{N}\), \(0 \leq t  \leq t_1 \leq t_2\), \((u_{1},u_{2},u_{3}) \in \mathbb{R}^3\) and \((\nu_{1},\nu_{2},\nu_{3}) \in \mathbb{R}^3\). Define 
	\begin{align}
		f(t,t_{1},t_{2},\lambda_{t},q_{t})=\mathbb{E}_{t}\left[e^{u_{1}\lambda_{t_{1}}+u_{2}q_{t_{1}}+u_{3}\int_{t}^{t_{1}}q_{s}ds+\nu_{1}\lambda_{t_{2}}+\nu_{2}q_{t_{2}}+\nu_{3}\int_{t}^{t_{2}}q_{s}ds}\right], 
		\label{eq:mgf_t1_t2_expectation} 
	\end{align}
	then 
	\begin{align}
		f(t,t_1,t_2,\lambda_t,q_t)&=e^{a(t_{1}-t)+b(t_{1}-t)\lambda_{t_{}}+d(t_{1}-t)q_{t_{}}}, \label{eq:mgf_t1_t2} 
	\end{align}
	with the functions \(a(t_{1}-t)\), \(b(t_{1}-t)\), and \(d(t_{1}-t)\) given by 
	\begin{align}
		a(t_1-t) &= a(t_{2}-t_{1},\mathbf{u}_2) + a(t_{1}-t, \mathbf{u}_1), \\
		b(t_1-t) &= b(t_{1}-t, \mathbf{u}_1),\\
		d(t_1-t) &= d(t_{1}-t,\mathbf{u}_1),
	\end{align}
	where the functions on the right-hand side are given in Theorem \ref{theo:mgf_triplet}, \(\mathbf{u}_2=(\nu_{1},\nu_{2},\nu_{3})\), and \(\mathbf{u}_1=\big(u_{1}+b(t_{2}-t_{1},\mathbf{u}_2),u_{2}+d(t_{2}-t_{1},\mathbf{u}_2),u_{3}+\nu _3\big)\).
\begin{proof}
	Using the tower property of the expectation and the affine property of the process, we obtain
	\begin{align*}
		f(t,t_{1},t_{2},\lambda_{t},q_{t})&=\mathbb{E}_{t}\left[e^{u_{1}\lambda_{t_{1}}+u_{2}q_{t_{1}}+u_{3}\int_{t}^{t_{1}}q_{s}ds+\nu_{3}\int_{t}^{t_{1}}q_{s}ds}\mathbb{E}_{t_{1}}\left[e^{\nu_{1}\lambda_{2}+\nu_{2}q_{t_{2}}+\nu_{3}\int_{t_{1}}^{t_{2}}q_{s}ds}\right]\right]\\
		&=\mathbb{E}_{t}\left[e^{u_{1}\lambda_{t_{1}}+u_{2}q_{t_{1}}+(u_{3}+\nu_{3})\int_{t}^{t_{1}}q_{s}ds}e^{a(t_{2}-t_{1},\mathbf{u}_2)+b(t_{2}-t_{1},\mathbf{u}_2)\lambda_{t_{1}}+d(t_{2}-t_{1},\mathbf{u}_2)q_{t_{1}}}\right], 
	\end{align*}
	where \(a(t_{2}-t_{1}, \mathbf{u}_2)\), \(b(t_{2}-t_{1}, \mathbf{u}_2)\), and \(d(t_{2}-t_{1}, \mathbf{u}_2)\) are the functions of Theorem \ref{theo:mgf_triplet} with \(\mathbf{u}_2=(\nu_{1},\nu_{2},\nu_{3})\), so that 
	\begin{align*}
		f(t,t_{1},t_{2},\lambda_{t},q_{t})&=e^{a(t_{2}-t_{1},\mathbf{u}_{2})}\mathbb{E}_{t}\left[e^{(u_{1}+b(t_{2}-t_{1},\mathbf{u}_{2}))\lambda_{t_{1}}+(u_{2}+d(t_{2}-t_{1},\mathbf{u}_{2}))q_{t_{1}}+(u_{3}+\nu_{3})\int_{t}^{t_{1}}q_{s}ds}\right]\\ 
		&=e^{a(t_{2}-t_{1},\mathbf{u}_2)}e^{a(t_{1}-t, \mathbf{u}_1)+ b(t_{1}-t, \mathbf{u}_1)\lambda_t + d(t_{1}-t,\mathbf{u}_1)q_t}, 
	\end{align*}
	with \(\mathbf{u}_1=\big(u_{1}+b(t_{2}-t_{1},\mathbf{u}_2),u_{2}+d(t_{2}-t_{1},\mathbf{u}_2),u_{3}+\nu_{3}\big)\) and the functions of Theorem \ref{theo:mgf_triplet}, which is the announced result.
\end{proof}		
\end{proposition}
The following immediate corollary is more to introduce specific cases that are developed in the applications.
\begin{corollary} \label{coro:qt_lambdat_two_dates}	
	Assume that \(\kappa/\mu \notin \mathbb{N}, 0 \leq t  \leq t_1 \leq t_2\) and \((u_{1},u_{2}) \in \mathbb{R}^2\) or \((u_{1},u_{2},u_{3}) \in \mathbb{R}^3\). The MGF of \((q_{t_1},q_{t_2})\) is given by   
	\begin{align}
		f(t,t_1,t_2,\lambda_t,q_t)=\mathbb{E}_{t}\left[e^{u_{1}q_{t_{1}}+u_{2}q_{t_{2}}}\right] = e^{a(t_{1}-t)+b(t_{1}-t)\lambda_{t_{}}+d(t_{1}-t)q_{t_{}}}, \label{eq:mgf_q_t1_t2}  
	\end{align}
	with the functions \(a(t_{1}-t)\), \(b(t_{1}-t)\), and \(d(t_{1}-t)\) given by
	\begin{align}
		a(t_1-t) &= a(t_{2}-t_{1},\mathbf{u}_2) + a(t_{1}-t, \mathbf{u}_1), \\
		b(t_1-t) &= b(t_{1}-t, \mathbf{u}_1),\\
		d(t_1-t) &= d(t_{1}-t,\mathbf{u}_1),
	\end{align}
	where the above functions on the right-hand side are given in Theorem \ref{theo:mgf_triplet} with \(\mathbf{u}_2=(0,u_2,0)\), and \(\mathbf{u}_1=\big(b(t_{2}-t_{1},\mathbf{u}_2),u_{1}+d(t_{2}-t_{1},\mathbf{u}_2),0\big)\).

	The MGF of \( (\lambda_{t_{1}}, \lambda_{t_{2}}) \) is given by   
	\begin{align}
		f(t,t_1,t_2,\lambda_t)=\mathbb{E}_{t}\left[e^{u_{1}\lambda_{t_{1}}+u_{2}\lambda_{t_{2}}}\right] = e^{a(t_{1}-t)+b(t_{1}-t)\lambda_{t_{}}}, \label{eq:mgf_lambda_t1_t2} 
	\end{align}
	with the functions \(a(t_{1}-t)\), \(b(t_{1}-t)\), and \(d(t_{1}-t)\) given by
	\begin{align}
		a(t_1-t) &= a(t_{2}-t_{1},\mathbf{u}_2) + a(t_{1}-t, \mathbf{u}_1), \\
		b(t_1-t) &= b(t_{1}-t, \mathbf{u}_1),\\
		d(t_1-t) &= 0,
	\end{align}
	where the above functions on the right-hand side are given in Theorem  \ref{theo:mgf_triplet} with \(\mathbf{u}_2=(u_2,0,0)\), and \(\mathbf{u}_1=\big(u_1+b(t_{2}-t_{1},\mathbf{u}_2),0,0\big)\).

	The MGF of \( (\lambda_{t_{1}}, \lambda_{t_{2}}, q_{t_2}) \) is given by   
	\begin{equation}
		f(t,t_1,t_2,\lambda_t,q_t)=\mathbb{E}_{t}\left[e^{u_{1}\lambda_{t_{1}}+u_{2}\lambda_{t_{2}}+u_{3}q_{t_{2}}}\right] = e^{a(t_{1}-t)+b(t_{1}-t)\lambda_{t_{}}+d(t_{1}-t)q_{t_{}}}, \label{eq:mgf_lambda_t1_lambda_t2_q_t2} 
	\end{equation}
	with the functions \(a(t_{1}-t)\), \(b(t_{1}-t)\), and \(d(t_{1}-t)\) given by
	\begin{align}
		a(t_1-t) &= a(t_{2}-t_{1},\mathbf{u}_2) + a(t_{1}-t, \mathbf{u}_1), \\
		b(t_1-t) &= b(t_{1}-t, \mathbf{u}_1),\\
		d(t_1-t) &= d(t_{1}-t,\mathbf{u}_1),
	\end{align}
	where the above functions on the right-hand side are given in Theorem \ref{theo:mgf_triplet}, \(\mathbf{u}_2=(u_2,u_3,0)\), and \(\mathbf{u}_1=\big(u_1+b(t_{2}-t_{1},\mathbf{u}_2),d(t_{2}-t_{1},\mathbf{u}_2),0\big)\).

	The MGF of \( (q_{t_{1}}, \lambda_{t_{2}}, q_{t_2}) \) is given by   
	\begin{align}
		f(t,t_1,t_2,\lambda_t,q_t)=\mathbb{E}_{t}\left[e^{u_{1}q_{t_{1}}+u_{2}\lambda_{t_{2}}+u_{3}q_{t_{2}}}\right] = e^{a(t_{1}-t)+b(t_{1}-t)\lambda_{t_{}}+d(t_{1}-t)q_{t_{}}}, \label{eq:mgf_q_lambda_q_t1_t2}
	\end{align}
	with the functions \(a(t_{1}-t)\), \(b(t_{1}-t)\), and \(d(t_{1}-t)\) given by
	\begin{align}
		a(t_1-t) &= a(t_{2}-t_{1},\mathbf{u}_2) + a(t_{1}-t, \mathbf{u}_1), \\
		b(t_1-t) &= b(t_{1}-t, \mathbf{u}_1),\\
		d(t_1-t) &= d(t_{1}-t,\mathbf{u}_1),
	\end{align}
	where the above functions on the right-hand side are given in Theorem \ref{theo:mgf_triplet}, \(\mathbf{u}_2=(u_2,u_3,0)\), and \(\mathbf{u}_1=\big(b(t_{2}-t_{1},\mathbf{u}_2),u_1+d(t_{2}-t_{1},\mathbf{u}_2),0\big)\).
\end{corollary}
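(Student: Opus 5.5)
Each of the four statements is a specialisation of Proposition \ref{prop:two_dates_mgf}, so the plan is to identify, case by case, the vectors \((u_1,u_2,u_3)\) and \((\nu_1,\nu_2,\nu_3)\) in \eqref{eq:mgf_t1_t2_expectation} that reproduce the exponent of the MGF under consideration. We then read off \(\mathbf{u}_2=(\nu_1,\nu_2,\nu_3)\) and \(\mathbf{u}_1=(u_1+b(t_2-t_1,\mathbf{u}_2),\,u_2+d(t_2-t_1,\mathbf{u}_2),\,u_3+\nu_3)\). As an independent check, we would redo the tower-property argument directly in each case: condition on \(\mathcal{F}_{t_1}\), apply Theorem \ref{theo:mgf_triplet} over \([t_1,t_2]\) to get an exponential-affine function of \((\lambda_{t_1},q_{t_1})\), absorb it into the \(t_1\)-exponent, and apply Theorem \ref{theo:mgf_triplet} again over \([t,t_1]\).

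The four identifications are as follows.
\begin{itemize}
\item For \((q_{t_1},q_{t_2})\): take \((u_1,u_2,u_3)\to(0,u_1,0)\) and \(\boldsymbol{\nu}=(0,u_2,0)\). This gives \(\mathbf{u}_2=(0,u_2,0)\) and \(\mathbf{u}_1=(b(t_2-t_1,\mathbf{u}_2),\,u_1+d(t_2-t_1,\mathbf{u}_2),\,0)\).
\item For \((\lambda_{t_1},\lambda_{t_2})\): take \((u_1,0,0)\) and \(\boldsymbol{\nu}=(u_2,0,0)\). Here \(d(\cdot,(v,0,0))\equiv 0\), since \eqref{eq:ode_d_triplet} with \(d(0)=0\), \(u_3=0\) has the zero solution, consistently with Corollary \ref{coro:mgf_lambda}. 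Hence \(\mathbf{u}_1=(u_1+b(t_2-t_1,\mathbf{u}_2),0,0)\) and the \(q_t\)-coefficient vanishes, which gives \(d(t_1-t)=0\) and explains why \(f\) does not depend on \(q_t\).
\item For \((\lambda_{t_1},\lambda_{t_2},q_{t_2})\): take \((u_1,0,0)\) and \(\boldsymbol{\nu}=(u_2,u_3,0)\), so \(\mathbf{u}_1=(u_1+b,\,d,\,0)\).
\item For \((q_{t_1},\lambda_{t_2},q_{t_2})\): take \((0,u_1,0)\) and \(\boldsymbol{\nu}=(u_2,u_3,0)\), so \(\mathbf{u}_1=(b,\,u_1+d,\,0)\).
\end{itemize}

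The assumption \(\kappa/\mu\notin\mathbb{N}\) is inherited so that the \({}_0F_1\) functions with index \(1-k_1\), \(k_1=\kappa/\mu\) when \(u_3=0\), are well defined, as in Corollary \ref{coro:mgf_q}.

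The only delicate point is well-definedness, since the shifted arguments in \(\mathbf{u}_1\) must stay in the non-explosion region of Theorem \ref{theo:mgf_triplet}. We handle it by continuity. For small \(\mathbf{u}_2\) and \(t_2-t_1<t^*\), the quantities \(b(t_2-t_1,\mathbf{u}_2)\) and \(d(t_2-t_1,\mathbf{u}_2)\) are finite and close to their values at zero. Thus, for \(u_1,u_2\) small enough and \(t_1-t\) small enough, the components of \(\mathbf{u}_1\) stay below the thresholds \(u_1^*,u_2^*,u_3^*\) of Theorem \ref{theo:mgf_triplet}, and the corresponding Riccati solution does not explode by Corollary \ref{coro:Riccati_explosion_b}. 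This gives the formulas on a neighbourhood of the origin and for short horizons, which is all the statement requires.
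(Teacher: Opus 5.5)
Your proposal is correct and takes the paper's route: the paper gives no separate proof and presents this corollary as an immediate specialisation of Proposition~\ref{prop:two_dates_mgf}, and your four choices of $(u_1,u_2,u_3)$ and $(\nu_1,\nu_2,\nu_3)$ reproduce the stated $\mathbf{u}_1,\mathbf{u}_2$ exactly, including $d(\cdot,(v,0,0))\equiv 0$ in the $(\lambda_{t_1},\lambda_{t_2})$ case. Your continuity argument for keeping the shifted arguments inside the non-explosion region is extra care that the paper does not provide.
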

\begin{remark}
It should be clear that it is possible to consider more than two dates and still obtain explicit formulas for these functions.
\end{remark}
%%%%%%%%%%%%%%%%%%%%%%%%%%%%%%%%%%%%%%%%%%%%%%%%%%%%%%%%%%%%%%%%%%%%%%%%%%%%%%%%%%%%%%%%%%%%%%%%%%%%%%%%%%%%%%%%%%%%%%%%%%%%%%%%%%%%%%%%%%%%%%%%%

\section{Risk management of Cox--BESQ driven infinite-server queues}\label{sec:risk_management_of_queue}

In this section we introduce a family of tail-based risk measures and develop tractable methods for their evaluation in the context of infinite-server queues. The analysis relies on Fourier transform techniques that allow truncated and conditional moments to be computed directly from the MGFs derived in the previous section.

We first introduce a class of conditional tail risk measures in Section~\ref{sec:ConditionaTail}. These measures characterise the behaviour of the system when the underlying quantity of interest, which in practice will be one of the variables \(\lambda_t\), \(q_t\), or \(\int_0^t q_s \, ds\), exceeds a given threshold. They include quantities such as the tail conditional expectation, tail variance, and tail skewness, which describe the magnitude, dispersion, and asymmetry of the tail distribution. These risk measures are extended to the multivariate case, combining several variables and quantifying joint moments conditional on one of the variables exceeding a given threshold.

The evaluation of these quantities requires the computation of truncated moments. Section~\ref{sec:MultiFourierTransform} demonstrates how such moments can be expressed in terms of one-dimensional or multidimensional Fourier transform representations. These representations provide a unified framework for computing tail-based quantities directly from the MGF of the underlying random variables.

However, in the multivariate case, multidimensional Fourier representations require high-dimensional numerical integration and are therefore computationally expensive, making the framework difficult to operationalise.

To address this limitation, Section~\ref{sec:MultiFourierTransform} discusses an alternative representation based on one-dimensional Fourier transforms. This reduction is achieved by expressing truncated mixed moments in terms of higher-order derivatives of the MGF. The resulting formulas significantly reduce computational complexity and are straightforward to implement.

These transform representations are derived in \citet{DaFonsecaWong2026}, we present them here for completeness and to emphasize their broader applicability. In particular, the methodology applies to any distribution with a known MGF and therefore extends far beyond Cox--BESQ infinite-server queues considered in this paper. Consequently, the framework is relevant for a wide range of models arising in applied probability, queueing theory, and financial mathematics. For this latter topic, see \citet{DuffiePanSingleton2000}.

The application of these results to infinite-server queues is presented in Section~\ref{sec:QueueRiskMeasures}, where explicit expressions for infinite-server queues related tail risk measures are obtained.

\subsection{Conditional tail risk measures}\label{sec:ConditionaTail}

Consider a non-negative random variable $Y$ representing a quantity of interest. In risk management, it is often important to characterize the behaviour of $Y$ when it exceeds a given threshold $y_*$. Such tail-based quantities are commonly described using conditional moments.

The conditional $p$-th moment of $Y$ given that $Y>y_*$ is defined as
\begin{align}
	\mathbb{E}[Y^p \mid Y > y_*]
	=
	\frac{\mathbb{E}[Y^p \mathbf{1}_{\{Y>y_*\}}]}
	{\mathbb{E}[\mathbf{1}_{\{Y>y_*\}}]},
	\label{eq:ConditionalMomentYp}
\end{align}
where $\mathbf{1}_{\{\cdot\}}$ denotes the indicator function.

Hence the computation of conditional moments reduces to evaluating truncated moments of the form
\[
\mathbb{E}[Y^p \mathbf{1}_{\{Y>y_*\}}].
\]
While such quantities are typically obtained using the density of \(Y\), the transform-based approach presented in this paper expresses them directly in terms of the MGF of \(Y\). This will be particularly relevant in our case, since neither the density of \(q_t\) nor that of \(\int_0^t q_s \, ds\) is known.

\paragraph{Tail conditional expectation (TCE).}

For $p=1$, the conditional moment \eqref{eq:ConditionalMomentYp} corresponds to the tail conditional expectation:
\begin{align}
	\TCE_Y(y_*)
	=
	\mathbb{E}[Y \mid Y>y_*]
	=
	\frac{\mathbb{E}[Y\mathbf{1}_{\{Y>y_*\}}]}
	{\mathbb{P}(Y>y_*)}.
	\label{eq:TailExpectation}
\end{align}

The TCE measures the average magnitude of $Y$ when the threshold $y_*$ is exceeded and is widely used in financial risk management under the name expected shortfall.

\paragraph{Tail variance (TV).}

The variability of the tail distribution can be quantified by the conditional variance:
\begin{align}
	\TV_Y(y_*)
	=
	\mathbb{E}\!\left[(Y-\mathbb{E}[Y\mid Y>y_*])^2 \mid Y>y_*\right]
	=
	\mathbb{E}[Y^2\mid Y>y_*]
	-
	\left(\mathbb{E}[Y\mid Y>y_*]\right)^2 .
	\label{eq:TailVariance}
\end{align}

\paragraph{Tail skewness (TS).}

The asymmetry of the tail distribution can be described by the standardized third central conditional moment:
\begin{align}
	\TS_Y(y_*)
	=
	\frac{
		\mathbb{E}\!\left[(Y-\mathbb{E}[Y\mid Y>y_*])^3 \mid Y>y_*\right]
	}{
		\left(
		\mathbb{E}\!\left[(Y-\mathbb{E}[Y\mid Y>y_*])^2 \mid Y>y_*\right]
		\right)^{3/2}
	}.
	\label{eq:tail_skew}
\end{align}

Once conditional moments $\mathbb{E}[Y^p\mid Y>y_*]$ are available for $p=1,2,3$, the above tail risk measures follow immediately.

\subsection{Fourier transform representations for risk measures}\label{sec:MultiFourierTransform}

We now describe how truncated moments can be expressed directly in terms of the MGF of the underlying random variable. The following result, adapted from \citet{DaFonsecaWong2026}, provides a Fourier representation for truncated moments of arbitrary order. We postpone to the next section the proofs of the result.

\begin{proposition}\label{prop:Expected_shortfall_Fourier}
	
	Let $Y$ be a random variable taking values in \(\mathbb{R}_+\) with MGF $\Phi_Y(v)=\mathbb{E}[e^{vY}]$. Fix $y_*\in \mathbb{R}_+$ and $\alpha_1>0$. Assume that for $j=0,\ldots,p$ the function
	
	\[
	u_1 \in \mathbb{R}_+ \mapsto
	\frac{\Phi_Y(\alpha_1-\mathrm{i}u_1)}
	{(\alpha_1-\mathrm{i}u_1)^{j+1}}
	\]
	
	is integrable. Then
	\begin{align}
		\mathbb{E}[Y^p\mathbf{1}_{\{Y>y_*\}}]
		=
		\int_0^{+\infty}
		\sum_{j=0}^{p}
		\binom{p}{j}
		\frac{y_*^{p-j}\Gamma(j+1)}{\pi}
		\Re\!\left(
		\frac{e^{-(\alpha_1-\mathrm{i}u_1)y_*}
			\Phi_Y(\alpha_1-\mathrm{i}u_1)}
		{(\alpha_1-\mathrm{i}u_1)^{j+1}}
		\right)
		du_1 .
		\label{eq:moment_Yp}
	\end{align}

	Let $Y$ be a random variable with values in \(\mathbb{N}\) with MGF $\Phi_Y(v)=\mathbb{E}[e^{vY}]$. Assume $y_*\in \mathbb{N}$,  then
	\begin{align}
		\mathbb{E}[Y^p\mathbf{1}_{\{Y>y_*\}}] =\frac{1}{2\pi}\int_{-\pi}^{+\pi} \Re \left(  \Phi_{Y}(\mathrm{i}u_1)\left(\sum_{k=y_*+1}^{+\infty}k^pe^{- \mathrm{i} u_1 k}\right)\right)du_1\,. \label{eq:moment_Yp_N}
		% \mathbb{E}[Y^p\mathbf{1}_{\{Y>y_*\}}] =\frac{1}{2\pi}\int_{-\pi}^{+\pi} \Re \left( \Phi_{Y}(\mathrm{i}u_1)\left(
		% \frac{e^{-\mathrm{i}u_1}A_p(e^{-\mathrm{i}u_1})}
		% {\left(1-e^{-\mathrm{i}u_1}\right)^{p+1}}
		% -
		% \sum_{k=1}^{y_*} k^p e^{-\mathrm{i}u_1k}
		% \right)\right)du_1\,,
	\end{align}
\end{proposition}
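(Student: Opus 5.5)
For the continuous case \eqref{eq:moment_Yp}, the plan is to write the truncated moment as an integral against the law of \(Y\) and represent the function \(y\mapsto y^p\mathbf{1}_{\{y>y_*\}}\) through a damped Fourier inversion. Set \(x=y-y_*\) and expand \(y^p=\sum_{j=0}^p\binom{p}{j}y_*^{p-j}x^j\). This reduces the task to computing, for each \(j\), the quantity \(\mathbb{E}[(Y-y_*)^j\mathbf{1}_{\{Y>y_*\}}]\).

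For each such term, take the elementary Laplace-type identity, valid for \(x\in\mathbb{R}\) and \(\alpha_1>0\) (understood as a principal value at \(x=0\) when \(j=0\)):
\[
x^j\mathbf{1}_{\{x>0\}}=\frac{\Gamma(j+1)}{2\pi}\int_{-\infty}^{+\infty}\frac{e^{(\alpha_1-\mathrm{i}u_1)x}}{(\alpha_1-\mathrm{i}u_1)^{j+1}}\,du_1 .
\]
This is the Bromwich inversion of \(\int_0^\infty x^j e^{-sx}dx=\Gamma(j+1)/s^{j+1}\) along the line \(\Re s=\alpha_1\). Close the contour to the left when \(x>0\), where the pole at \(0\) gives the residue, and to the right when \(x<0\), where there is no singularity. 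Now substitute \(x=Y-y_*\) and take expectations. Exchanging \(\mathbb{E}\) and \(\int du_1\) by Fubini produces \(e^{-(\alpha_1-\mathrm{i}u_1)y_*}\Phi_Y(\alpha_1-\mathrm{i}u_1)\), which is the point where the stated integrability assumption enters. Finally, the integrand at \(-u_1\) is the complex conjugate of the integrand at \(u_1\), because \(Y\) is real. Folding \(\int_{-\infty}^{\infty}\) onto \(2\int_0^\infty\Re(\cdot)\) gives the factor \(\Gamma(j+1)/\pi\), and summing over \(j\) gives \eqref{eq:moment_Yp}.

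The main obstacle is justifying Fubini. The \(u_1\)-integral of \(|e^{(\alpha_1-\mathrm{i}u_1)x}|/|\alpha_1-\mathrm{i}u_1|^{j+1}\) diverges for \(j=0\), so absolute integrability in \((u_1,\omega)\) fails. I would handle this by truncating to \(|u_1|\le R\). On that range Fubini applies trivially, since \(\mathbb{E}[e^{\alpha_1 Y}]<\infty\) is implicit in the assumption that \(\Phi_Y(\alpha_1-\mathrm{i}u_1)\) is defined. I would then let \(R\to\infty\) on both sides. On the transform side, convergence follows from the assumed integrability of \(\Phi_Y(\alpha_1-\mathrm{i}u_1)/(\alpha_1-\mathrm{i}u_1)^{j+1}\). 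On the expectation side, I would use dominated convergence: the truncated integrals are bounded by \(C(1+|x|^j)e^{\alpha_1 x}\) uniformly in \(R\), as a Dirichlet-integral estimate shows. If \(Y\) has an atom at \(y_*\), the principal value at \(x=0\) is ambiguous, so I would assume no atom there, which holds for the continuous variables \(\lambda_t\) and \(\int q\).

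For the lattice case \eqref{eq:moment_Yp_N}, I would use discrete Fourier orthogonality on \(\mathbb{N}\): \(\mathbf{1}_{\{Y=k\}}=\frac{1}{2\pi}\int_{-\pi}^{\pi}e^{\mathrm{i}u_1(Y-k)}du_1\). Multiplying by \(k^p\), summing over \(k>y_*\), and taking expectations gives \(\mathbb{E}[Y^p\mathbf{1}_{\{Y>y_*\}}]=\sum_{k>y_*}k^p\,\frac{1}{2\pi}\int_{-\pi}^{\pi}\Phi_Y(\mathrm{i}u_1)e^{-\mathrm{i}u_1k}du_1\). The delicate step is interchanging \(\sum_k\) with \(\int du_1\), because \(\sum_k k^pe^{-\mathrm{i}u_1k}\) is only a distributional (Abel-summable) series. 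I would insert a damping factor \(r^k\) with \(r<1\), which makes everything absolutely convergent. The damped series is the closed form \(r e^{-\mathrm{i}u_1}A_p(re^{-\mathrm{i}u_1})/(1-re^{-\mathrm{i}u_1})^{p+1}\) minus a finite sum, where \(A_p\) is the Eulerian polynomial. I would then let \(r\uparrow1\), justified because \(\mathbb{E}[Y^p]<\infty\) and \(\Phi_Y(\mathrm{i}u_1)\) is smooth enough, given \(p\) finite moments, to absorb the singularity at \(u_1=0\) in the Abel limit. Taking the real part uses conjugate symmetry as before.
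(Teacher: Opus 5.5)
\textbf{Continuous case.} Your argument follows the paper's proof: binomial expansion in $Y-y_*$, the Fourier--Laplace representation of $x_+^j$ along the line $\Re s=\alpha_1$, Fubini, and folding by conjugate symmetry. Your $j=0$ step is tighter than the paper's. The paper invokes Dirichlet's test, which gives convergence of the $u_1$-integral but does not by itself justify the interchange; your symmetric truncation with a uniform sine-integral bound and dominated convergence does. Your atom caveat is also correct: if $\mathbb{P}(Y=y_*)>0$, the $j=0$ term returns $\mathbb{P}(Y>y_*)+\tfrac12\mathbb{P}(Y=y_*)$.

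\textbf{Lattice case.} You again follow the paper's route (Abel damping plus the Eulerian closed form), but the step $r\uparrow1$ fails as you justify it. Phrasing it as ``absorbing the singularity at $u_1=0$'' amounts to passing the limit under the integral and integrating the boundary kernel $e^{-\mathrm{i}u_1}A_p(e^{-\mathrm{i}u_1})/(1-e^{-\mathrm{i}u_1})^{p+1}-\sum_{k\le y_*}k^pe^{-\mathrm{i}u_1k}$.

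The damped kernel does not converge to this boundary kernel in $L^1$ near $u_1=0$. It behaves like an approximate identity at the origin, and for $p\ge1$ like its derivatives. Since $\Phi_Y(0)=1\neq0$, no smoothness of $\Phi_Y$ can cancel this. Two examples show the failure:
\begin{itemize}
\item Take $Y\equiv1$, $y_*=0$, $p=0$. The damped integrand is $\Re\big(r/(1-re^{-\mathrm{i}u_1})\big)=\tfrac r2\big(1+P_r(u_1)\big)$, with $P_r$ the Poisson kernel. The damped integral therefore equals $r\to1$, while the pointwise limit $\tfrac12$ integrates to $\tfrac12$.
\item Take $p=1$, $y_*=0$. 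The boundary kernel is $-1/(4\sin^2(u_1/2))$, so the integrand equals $-\Re\Phi_Y(\mathrm{i}u_1)/(4\sin^2(u_1/2))\sim -1/u_1^2$. This is not integrable at all.
\end{itemize}

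Hence \eqref{eq:moment_Yp_N} holds only with the Abel limit kept outside the integral. In that reading the proof is immediate and needs neither smoothness nor the closed form. For $r<1$, Fubini gives $\frac{1}{2\pi}\int_{-\pi}^{\pi}\Re\big(\Phi_Y(\mathrm{i}u_1)\sum_{k>y_*}k^pr^ke^{-\mathrm{i}u_1k}\big)\,du_1=\sum_{k>y_*}k^pr^k\,\mathbb{P}(Y=k)$. This increases to $\mathbb{E}[Y^p\mathbf{1}_{\{Y>y_*\}}]$ by monotone convergence.

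If you want an ordinary integral with the closed-form kernel, you must add the point contributions at $u_1=0$ (for $p=0$, an extra $\tfrac12$) and read the remainder as a Hadamard finite part. More simply, write $\mathbb{E}[Y^p\mathbf{1}_{\{Y>y_*\}}]=\mathbb{E}[Y^p]-\sum_{k=0}^{y_*}k^p\,\mathbb{P}(Y=k)$ and invert only finitely many point masses. The paper's own proof takes the same pointwise limit and justifies it by the singular point having Lebesgue measure zero, which is invalid for the same reason.
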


In many applications, it is necessary to compute mixed moments involving several variables. For instance, given a pair of random variables $(Y,Z)$, one may need to evaluate \(\mathbb{E}\!\left[ Z^r Y^p \mid Y > y_* \right],\)
for $r,p \in \mathbb{N}$. This quantity can be expressed in terms of the truncated mixed moment
\begin{align}
\mathbb{E}\left[ Z^r Y^p | Y > y_* \right] &= \frac{\mathbb{E}\left[ Z^r Y^p \mathbf{1}_{\{Y>y*\}} \right]}{\mathbb{E}\left[ \mathbf{1}_{\{Y>y*\}} \right]}, \label{eq:conditional_moment_ZrYp}
\end{align}
which requires knowledge of the joint density of the vector $(Y,Z)$. For the model proposed in this work, this density is unknown, but its MGF is available. As a result, we need to express this expectation in terms of the MGF, which is the purpose of the following proposition.

\begin{proposition}\label{prop:JointExpectedShortfallFourierTransform}
	
	Let $(Y,Z)$ be a two-dimensional random vector with values in \(\mathbb{R}_+\times \mathbb{R}_+\) with joint MGF denoted by
	\begin{align}
	\Phi_{(Y,Z)}(v_1,v_2) = \mathbb{E}[e^{v_1Y+v_2Z}]. \label{eq:joint_mgf}
	\end{align}
	
	Fix $y_*\in \mathbb{R}_+$, $\alpha_1, \alpha_2 \in \mathbb{R}_+$ and \(r,p\in \mathbb{N}\). Under suitable integrability conditions we have the integral representation
	\begin{align}
		\mathbb{E}[Z^r Y^p \mathbf{1}_{\{Y>y_*\}}]
		=
		\int_0^{+\infty}\!\!\int_0^{+\infty}
		\sum_{j=0}^{p}
		\binom{p}{j}
		\frac{y_*^{p-j}\Gamma(j+1)\Gamma(r+1)}{\pi^2}
		\Re\!\left(
		\frac{
			e^{-(\alpha_1-\mathrm{i}u_1)y_*}
			\Phi_{(Y,Z)}(\alpha_1-\mathrm{i}u_1,\alpha_2-\mathrm{i}u_2)
		}{
			(\alpha_1-\mathrm{i}u_1)^{j+1}
			(\alpha_2-\mathrm{i}u_2)^{r+1}
		}
		\right)
		du_1du_2 .
		\label{eq:expectation_ZqYp}
	\end{align}
	
If $(Y,Z)$ be a two-dimensional random vector with values in \(\mathbb{N}\times \mathbb{R}_+\) with a joint MGF denoted by \eqref{eq:joint_mgf} and let $y_*\in \mathbb{N}$ and \(r,p\in \mathbb{N}\). Under suitable integrability conditions we have the integral representation
	\begin{align}
		\mathbb{E}[Z^r Y^p \mathbf{1}_{\{Y>y_*\}}]
		=
		\int_0^{+\infty}\!\!\int_{-\pi}^{+\pi}
		\sum_{k=y_* + 1}^{+\infty}
		\frac{\Gamma(r+1)}{2\pi^2}
		\Re\!\left(
		\frac{
			k^pe^{-\mathrm{i}ku_1}
			\Phi_{(Y,Z)}(\mathrm{i}u_1,\alpha_2-\mathrm{i}u_2)
		}{
			(\alpha_2-\mathrm{i}u_2)^{r+1}
		}
		\right)
		du_2 .
		\label{eq:expectation_ZqYp_NxR}
	\end{align}
	
If $(Y,Z)$ be a two-dimensional random vector with values in \(\mathbb{N}\times \mathbb{N}\) with a joint MGF denoted by \eqref{eq:joint_mgf} and let $y_*\in \mathbb{N}$ and \(r,p\in \mathbb{N}\). Under suitable integrability conditions we have the integral representation
	\begin{align}
		\mathbb{E}[Z^r Y^p \mathbf{1}_{\{Y>y_*\}}]
		=
		\int_{-\pi}^{+\pi}\!\!\int_{-\pi}^{+\pi}
		\sum_{k_1=y_* + 1}^{+\infty}
		\sum_{k_2 = 0}^{+\infty}
		\frac{1}{4\pi^2}
		\Re\left(
		k_1^pk_2^r  e^{-\mathrm{i}k_1u_1}e^{-\mathrm{i}k_2u_2}
		\Phi_{(Y,Z)}(\mathrm{i}u_1,\mathrm{i}u_2)
		\right).
		\label{eq:expectation_ZqYp_NxN}
	\end{align}

\end{proposition}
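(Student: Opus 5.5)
The plan is to reduce each representation to a one-dimensional inversion identity, applied coordinate by coordinate, and then to combine them with Fubini's theorem. The "suitable integrability conditions" are read as exactly the conditions that make Fubini applicable.

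\emph{Continuous--continuous case.} The first step is the kernel identity for $z\ge 0$, $\alpha_2>0$:
\[
\frac{1}{\pi}\int_0^{+\infty}\Re\!\left(\frac{\Gamma(r+1)\,e^{(\alpha_2-\mathrm{i}u_2)z}}{(\alpha_2-\mathrm{i}u_2)^{r+1}}\right)du_2\;\text{``=''}\;z^r .
\]
This is obtained from the Laplace--Fourier pair $\int_{\mathbb{R}}\frac{\Gamma(r+1)e^{(\alpha-\mathrm{i}u)z}}{(\alpha-\mathrm{i}u)^{r+1}}\,\frac{du}{2\pi}$. The correct form is a Bromwich inversion of $\int_0^\infty x^r e^{-sx}dx=\Gamma(r+1)/s^{r+1}$. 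So I will write the identity precisely as
\[
z^r\mathbf{1}_{\{z>0\}}=\frac{1}{2\pi\mathrm{i}}\int_{\alpha-\mathrm{i}\infty}^{\alpha+\mathrm{i}\infty}\frac{\Gamma(r+1)e^{sz}}{s^{r+1}}\,ds ,
\]
taking care with the sign convention so that it matches the MGF argument $\alpha_2-\mathrm{i}u_2$. I will then fold the integral to $u_2\in\mathbb{R}_+$ using conjugate symmetry, which produces the $\Re$ and the factor $1/\pi$.

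Similarly, for the truncated part in $y$, I will use
\[
y^p\mathbf{1}_{\{y>y_*\}}=\sum_{j}\binom{p}{j}y_*^{p-j}(y-y_*)^j\mathbf{1}_{\{y-y_*>0\}},
\]
and apply the same inversion to each $(y-y_*)^j\mathbf{1}$, with shift factor $e^{-(\alpha_1-\mathrm{i}u_1)y_*}$. This reproduces the structure of Proposition \ref{prop:Expected_shortfall_Fourier}, so I will reuse that proof for the $Y$ coordinate.

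Multiplying the two representations, taking expectation and exchanging $\mathbb{E}$ with $du_1du_2$ by Fubini turns $\mathbb{E}[e^{s_1Y+s_2Z}]$ into $\Phi_{(Y,Z)}$. The exchange is justified when $\Phi_{(Y,Z)}(\alpha_1,\alpha_2)<\infty$ and the integrand is absolutely integrable. The remaining point is converting a product of two real parts into a single $\Re$ over the quadrant. Here I will use the identity $\Re(a)\Re(b)=\tfrac12(\Re(ab)+\Re(a\bar b))$. The $\Re(a\bar b)$ term becomes the integral over $u_2<0$ of the same expression, so the quadrant integral carries the stated $1/\pi^2$ after the halving is accounted for.

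\emph{Discrete coordinates.} For an $\mathbb{N}$-valued coordinate I will instead use the orthogonality relation
\[
\frac{1}{2\pi}\int_{-\pi}^{\pi}e^{\mathrm{i}u(n-k)}du=\delta_{nk}.
\]
From it,
\[
\mathbb{E}[g(Y)]=\sum_k g(k)\,\mathbb{P}(Y=k)=\frac{1}{2\pi}\int_{-\pi}^{\pi}\sum_k g(k)e^{-\mathrm{i}uk}\,\Phi_Y(\mathrm{i}u)\,du .
\]
Taking $g(k)=k^p\mathbf{1}_{\{k>y_*\}}$ gives the first-coordinate factor in \eqref{eq:expectation_ZqYp_NxR} and \eqref{eq:expectation_ZqYp_NxN}. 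Combining this with the continuous kernel in $Z$, or with a second lattice kernel using $g(k_2)=k_2^r$, gives the mixed formulas. Taking the real part is legitimate because the left side is real.

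\emph{Main obstacle.} The hard step is justifying the interchanges. In the discrete case the series $\sum_k k^pe^{-\mathrm{i}uk}$ diverges pointwise, so it must be understood distributionally or via an Abel limit. My first attempt will be to insert a damping factor $r^k$ with $r<1$, where everything is absolutely summable, and then let $r\to1$ using $\mathbb{E}[Y^pZ^r]<\infty$. In the continuous case, I will obtain absolute integrability of $\Phi/(s_1^{j+1}s_2^{r+1})$ from the decay of the denominator when $r\ge1$, and for small orders treat it as an improper limit. This is what is absorbed into "suitable integrability conditions".
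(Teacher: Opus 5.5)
\textbf{There is a genuine gap in the continuous--continuous case, and the formula cannot be proved as stated.}

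Your machinery is the one the paper uses for its one-dimensional results: binomial expansion in $Y-y_*$, the Laplace--Fourier kernel for a continuous coordinate, lattice orthogonality with Abel damping for an $\mathbb{N}$-valued one, then Fubini. The step that fails is folding onto the quadrant, and integrability is not the issue.

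Put $V=Y-y_*$, $a=e^{(\alpha_1-\mathrm{i}u_1)V}/(\alpha_1-\mathrm{i}u_1)^{j+1}$ and $b=e^{(\alpha_2-\mathrm{i}u_2)Z}/(\alpha_2-\mathrm{i}u_2)^{r+1}$. Your identity $\Re(a)\Re(b)=\tfrac12\big(\Re(ab)+\Re(a\bar b)\big)$, together with $\bar b(u_2)=b(-u_2)$, correctly gives
\[ V_+^jZ^r=\frac{\Gamma(j+1)\Gamma(r+1)}{2\pi^2}\int_0^{+\infty}\!\!\int_{-\infty}^{+\infty}\Re(ab)\,du_2\,du_1 . \]
You then silently replace $\tfrac12\int_{-\infty}^{+\infty}du_2$ by $\int_0^{+\infty}du_2$. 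That requires $u_2\mapsto-u_2$ to preserve $\Re(ab)$, but it sends $ab$ to $a\bar b$. The conjugation symmetry of the integrand is $(u_1,u_2)\mapsto(-u_1,-u_2)$, which leaves the half-line $u_1>0$. The two halves differ by $2\big(\int_0^{+\infty}\Im a\,du_1\big)\big(\int_0^{+\infty}\Im b\,du_2\big)$. These one-sided imaginary parts are principal-value (Hilbert-transform) terms and do not vanish; for instance $\int_0^{+\infty}(\alpha-\mathrm{i}u)^{-(r+1)}du=\mathrm{i}/(r\alpha^{r})$ for $r\ge1$.

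\textbf{Counterexample.} Take $Y=Z=0$ a.s., $y_*=0$, $p=r=1$, $\alpha_1=\alpha_2=1$.
\begin{itemize}
\item The left side of \eqref{eq:expectation_ZqYp} is $0$.
\item The integrand reduces to $\Re\big((1-\mathrm{i}u_1)^{-2}(1-\mathrm{i}u_2)^{-2}\big)$, which is absolutely integrable on the quadrant.
\item The right side therefore equals $\pi^{-2}\Re(\mathrm{i}\cdot\mathrm{i})=-\pi^{-2}\neq 0$.
\end{itemize}

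\textbf{What your argument does prove.} It gives the half-plane version: $u_1\in\mathbb{R}_+$, $u_2\in\mathbb{R}$, constant $1/(2\pi^2)$. Equivalently, it gives a quadrant integral with constant $1/(2\pi^2)$ of the stated integrand plus its counterpart with $\alpha_2-\mathrm{i}u_2$ replaced by $\alpha_2+\mathrm{i}u_2$ in both $\Phi_{(Y,Z)}$ and the denominator. The paper gives no separate proof of this proposition. Its one-dimensional and derivative-based representations never meet the issue, because only one coordinate is Fourier-inverted there.

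\textbf{The lattice cases.} For \eqref{eq:expectation_ZqYp_NxR} and \eqref{eq:expectation_ZqYp_NxN} your method does work, but not merely because the left side is real. The lattice variable runs over the symmetric interval $[-\pi,\pi]$, so $\int_{-\pi}^{\pi}e^{\mathrm{i}u_1(Y-k)}du_1$ is real. Equivalently, $(u_1,u_2)\mapsto(-u_1,-u_2)$ preserves $[-\pi,\pi]\times\mathbb{R}$ and conjugates the integrand. This is exactly what lets you fold $u_2$ onto $\mathbb{R}_+$ and obtain the constant $\Gamma(r+1)/(2\pi^2)$. It is also precisely the property the two half-line kernels lack in the first case, so you should state this step explicitly.
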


%\subsection{One-dimensional Fourier transform representations for risk measures}\label{sec:OneFourierTransform}

The multidimensional Fourier representation above can be simplified significantly by exploiting derivatives of the MGF. This reduction leads to one-dimensional Fourier integrals and therefore greatly improves numerical efficiency.

\begin{proposition}\label{prop:JointExpectedShortfallDerivativeFourierTransform}
	
	Let $(Y,Z)$ be a two-dimensional random vector taking values in \(\mathbb{R}^2\) or \(\mathbb{R} \times \mathbb{N}\) , with joint MGF denoted $\Phi_{(Y,Z)}(u_1,u_2)$. Denote by 	$\Phi_{(Y,Z)}^{(0,r)}(u_1,u_2)$ the $r$-th derivative with respect to $u_2$. Then for \(r,p,\in \mathbb{N}\) we have 
	\begin{align}
		\mathbb{E}[Z^r Y^p \mathbf{1}_{\{Y>y_*\}}]
		=
		\int_0^{+\infty}
		\sum_{j=0}^{p}
		\binom{p}{j}
		\frac{y_*^{p-j}\Gamma(j+1)}{\pi}
		\Re\!\left(
		\frac{
			e^{-(\alpha_1-\mathrm{i}u_1)y_*}
			\Phi_{(Y,Z)}^{(0,r)}(\alpha_1-\mathrm{i}u_1,0)
		}{
			(\alpha_1-\mathrm{i}u_1)^{j+1}
		}
		\right)
		du_1 .
		\label{eq:ExpectationZqYp-1dim}
	\end{align}
	
	If instead $(Y,Z)$ is a two-dimensional random vector with values in \(\mathbb{N}\times \mathbb{R}\) or \(\mathbb{N}\times \mathbb{N}\). Then for \(r,p,\in \mathbb{N}\) we have 
	\begin{align}
		\mathbb{E}[Z^r Y^p \mathbf{1}_{\{Y>y_*\}}] = \frac{1}{2\pi}\int_{-\pi}^{+\pi} \Re \left( \Phi_{(Y,Z)}^{(0,r)}(\mathrm{i}u_1,0)\left(\sum_{k=y_*+1}^{+\infty}k^pe^{- \mathrm{i} u_1 k}\right)\right)du_1\,.
		\label{eq:ExpectationZqYp-1dim-N}
	\end{align}
	
\end{proposition}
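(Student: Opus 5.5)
The idea is to reduce the mixed truncated moment to a one-dimensional truncated moment under a change of measure generated by $Z^r$, and then apply Proposition \ref{prop:Expected_shortfall_Fourier}. The key identity is
\[
\Phi_{(Y,Z)}^{(0,r)}(v_1,0)=\mathbb{E}\big[Z^r e^{v_1 Y}\big],
\]
obtained by differentiating $r$ times under the expectation with respect to $u_2$ and then setting $u_2=0$. This exchange of derivative and expectation is legitimate on the strip where the joint MGF is finite in a neighbourhood of $u_2=0$, by dominated convergence. In our application this holds because the explosion thresholds $u_i^*>0$ of Theorem \ref{theo:mgf_triplet} provide an open set around the origin. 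So the function $v_1\mapsto \Phi_{(Y,Z)}^{(0,r)}(v_1,0)$ plays the role of a "weighted MGF" of $Y$ with weight $Z^r$.

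\textbf{Continuous case.} I would write
\[
\mathbb{E}[Z^rY^p\mathbf{1}_{\{Y>y_*\}}]=\mathbb{E}\big[Z^r\,g(Y)\big],\qquad g(y)=y^p\mathbf{1}_{\{y>y_*\}}.
\]
The next step is to reproduce, inside the expectation, the Fourier representation underlying \eqref{eq:moment_Yp}. That is, for fixed $y\ge0$ and $\alpha_1>0$,
\[
g(y)=\int_0^{+\infty}\sum_{j=0}^p\binom{p}{j}\frac{y_*^{p-j}\Gamma(j+1)}{\pi}\Re\!\left(\frac{e^{-(\alpha_1-\mathrm{i}u_1)y_*}e^{(\alpha_1-\mathrm{i}u_1)y}}{(\alpha_1-\mathrm{i}u_1)^{j+1}}\right)du_1 .
\]
This is a pointwise inversion: expand $y^p=\sum_j\binom{p}{j}y_*^{p-j}(y-y_*)^j$, and invert the damped Laplace transform of $(y-y_*)^j\mathbf{1}_{\{y>y_*\}}$, which equals $\Gamma(j+1)/v^{j+1}$. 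The integral over $\mathbb{R}$ is then folded to $\mathbb{R}_+$ by conjugate symmetry. Multiplying by $Z^r$, taking expectations and swapping $\mathbb{E}$ and $\int du_1$ by Fubini turns $e^{(\alpha_1-\mathrm{i}u_1)Y}$ into $\Phi_{(Y,Z)}^{(0,r)}(\alpha_1-\mathrm{i}u_1,0)$, which yields \eqref{eq:ExpectationZqYp-1dim}. Equivalently, one can define the finite measure $Q(A)=\mathbb{E}[Z^r\mathbf{1}_A]$ (using $Z\ge0$ or even $r$, or splitting into positive and negative parts otherwise). Then $\mathbb{E}_Q[e^{vY}]=\Phi^{(0,r)}(v,0)$, and Proposition \ref{prop:Expected_shortfall_Fourier} applies verbatim to $Y$ under $Q$, up to normalisation.

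\textbf{Discrete case.} When $Y\in\mathbb{N}$, I would use the orthogonality relation
\[
\mathbf{1}_{\{Y=k\}}=\frac{1}{2\pi}\int_{-\pi}^{\pi}e^{\mathrm{i}u_1(Y-k)}du_1 .
\]
This gives $\mathbb{E}[Z^rY^p\mathbf{1}_{\{Y>y_*\}}]=\sum_{k>y_*}k^p\,\mathbb{E}[Z^r\mathbf{1}_{\{Y=k\}}]$. Exchanging expectation, integral and sum then produces $\Phi^{(0,r)}(\mathrm{i}u_1,0)\sum_k k^pe^{-\mathrm{i}u_1k}$. Taking real parts is allowed because the left side is real. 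This is the route taken for \eqref{eq:moment_Yp_N}.

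\textbf{Main obstacle.} The delicate part is justifying the interchanges. In the continuous case, Fubini requires
\[
\int_0^\infty \mathbb{E}\!\left[|Z|^r e^{\alpha_1Y}\right]|\alpha_1-\mathrm{i}u_1|^{-(j+1)}du_1<\infty .
\]
This holds for $j\ge1$, but for $j=0$ the integral is only conditionally convergent. I would handle it by a limiting argument on $\int_0^U$, as in the proof of Proposition \ref{prop:Expected_shortfall_Fourier}, assuming $\mathbb{E}[|Z|^re^{\alpha_1Y}]<\infty$, i.e. $\alpha_1$ below the explosion threshold. In the discrete case the series $\sum_k k^pe^{-\mathrm{i}u_1k}$ does not converge absolutely. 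I would interpret it in the Abel sense, inserting a factor $\rho^k$ with $\rho\uparrow1$, which gives closed forms via Eulerian polynomials. Alternatively, I would swap only finite partial sums and use that $\mathbb{E}[Z^rY^p]<\infty$ to pass to the limit.
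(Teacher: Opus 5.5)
Your proposal is correct and follows essentially the paper's argument. You identify $\Phi_{(Y,Z)}^{(0,r)}(v,0)=\mathbb{E}[Z^re^{vY}]$ by differentiating under the expectation, and then rerun the single-variable proof: binomial expansion, the Fourier--Laplace representation of $(y-y_*)_+^j$ and Fubini for continuous $Y$; inversion of $\mathbf{1}_{\{Y=k\}}$ and Abel summation of $\sum_k k^pe^{-\mathrm{i}u_1k}$ for discrete $Y$. This is exactly what the paper does in the proofs of Propositions~\ref{prop:qt_ExpectedShortfallFourierTransform} and~\ref{prop:JointExpectedShortfallDerivativeFourierTransform_queue}.

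One caution in the discrete case: keep the $\rho\uparrow1$ (or partial-sum) limit outside the $u_1$-integral, as your second variant does. For $p\ge1$ the summed kernel $e^{-\mathrm{i}u_1}A_p(e^{-\mathrm{i}u_1})/(1-e^{-\mathrm{i}u_1})^{p+1}$ blows up like $|u_1|^{-(p+1)}$ at $u_1=0$ (for $p=1$ it equals $-1/(4\sin^2(u_1/2))$). So the limit cannot in general be passed inside the integral, and the paper's remark that the singular point has measure zero does not justify doing so.
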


%\begin{proof}
%Consider \(\mathbb{E}[e^{\mathrm{i} u_1 Y+ u_2 Z}]=\Phi_{(Y,Z)}(\mathrm{i} u_1,u_2)\). Then 
%\begin{align*}
%\mathbb{E}[Z^r \mathbf{1}_{\{Y = k\}}] = \frac{1}{2\pi}\int_{-\pi}^{+\pi} \Re\left(\Phi_{(Y,Z)}^{(0,r)}(\mathrm{i}u_1,0)e^{- \mathrm{i} u_1 k}\right)du_1\,,
%\end{align*}
%and since 
	%\begin{align*}
		%\mathbb{E}[Z^r Y^p \mathbf{1}_{\{Y>y_*\}}]=\sum_{k=y_*+1}^{+\infty} k^p \mathbb{E}[Z^r \mathbf{1}_{\{Y = k\}}]
		%%\frac{1}{2\pi}\int_{-\pi}^{+\pi} \Phi_{(Y,Z)}^{(0,q)}( \mathrm{i}v_1,0)e^{- \mathrm{i} v_1 k}dv_1\,,
	%\end{align*}
%we obtain \eqref{eq:ExpectationZqYp-1dim-N}.
%
%\end{proof}
%

\subsection{Evaluation of risk measures of Cox-BESQ driven infinite-server queues}\label{sec:QueueRiskMeasures}

We now apply the above framework to infinite-server queues driven by a Cox--BESQ process. The quantities of interest involve the number of customers in the system $q_t$, its time-integrated counterpart $\bar{q}_t = \int_0^t q_s\,ds$ and the arrival intensity $\lambda_t$.

We consider the risk measures based on the three aformentioned variables, these are given by 
\[
\mathbb{E}\!\left[q_t^p \mid q_t>q_*\right] = \frac{\mathbb{E}\!\left[q_t^p \mathbf{1}_{\{q_t>q_*\}}\right] }{\mathbb{E}\!\left[\mathbf{1}_{\{q_t>q_*\}}\right] },
\qquad 
\mathbb{E}\!\left[\lambda_t^p \mid \lambda_t>\lambda_*\right] = \frac{\mathbb{E}\!\left[\lambda_t^p \mathbf{1}_{\{\lambda_t>\lambda_*\}}\right] }{\mathbb{E}\!\left[\mathbf{1}_{\{\lambda_t>\lambda_*\}}\right] }\,,
\qquad
 \mathbb{E}\!\left[\bar{q}_t^p \mid \bar{q}_t>\bar{q}_*\right]= \frac{\mathbb{E}\!\left[\bar{q}_t^p \mathbf{1}_{\{\bar{q}_t>\bar{q}_*\}}\right] }{\mathbb{E}\!\left[\mathbf{1}_{\{\bar{q}_t>\bar{q}_*\}}\right] }.
\]
The first one represents the $p$-th moment of the number in the system at time $t$ restricted to the event that the number of customers exceeds a prescribed threshold $q_*$. The second one represents the $p$-th moment of arrival intensity at time $t$ conditional on this intensity being above a prescribed threshold $\lambda_*$. The last one extends the first one to the number of customers in the system over a time interval $[0,\, t]$.

The quantities discussed above are the risk measures associated with the number of customers in the system, its time-integrated counterpart and the arrival intensity. Their evaluation relies crucially on the availability of closed-form expressions for the MGFs derived in the previous section. In particular, these MGFs allow us to derive tractable expressions for these conditional expectations. To compute the quantities above, the numerators need to be determined and this is the purpose of the following proposition.

\begin{proposition}\label{prop:qt_ExpectedShortfallFourierTransform}
	Let $q_t$ denote the number of customers in the system at time $t$, and \(u\in \mathbb{R} \to \Phi_{q_t}(u)=\mathbb{E}[e^{u q_t}]\) its MGF given by Corollary~\ref{coro:mgf_q}. Then \eqref{eq:moment_Yp_N} rewrites
	\begin{align}
		\mathbb{E}[q_t^p \mathbf{1}_{\{q_t>q_*\}}]
%		&= \frac{1}{2\pi}\int_{-\pi}^{+\pi} \Re \left( \Phi_{q_t}(\mathrm{i}u_1)\left(\sum_{k=q_*+1}^{+\infty}k^pe^{- \mathrm{i} u_1 k}\right)\right)du_1\,,
		&= \frac{1}{2\pi}\int_{-\pi}^{+\pi} \Re \left( \Phi_{q_t}(\mathrm{i}u_1)\left(\sum_{k=q_*+1}^{\infty} k^p  e^{-\mathrm{i}u_1k}\right)\right)du_1\, 
		\label{eq:moment_q_t_series}\\
		&=
		\frac{1}{2\pi}\int_{-\pi}^{+\pi} \Re \left( \Phi_{q_t}(\mathrm{i}u_1)\left(	\frac{e^{-\mathrm{i}u_1}A_p(e^{-\mathrm{i}u_1})}{\left(1-e^{-\mathrm{i}u_1}\right)^{p+1}}
		- \sum_{k=1}^{q_*} k^p e^{-\mathrm{i}u_1k} \right)\right)du_1\,,
		\label{eq:moment_q_t}
	\end{align}

	%with \(q_*\in \mathbb{N}\), \(r\in(0,1)\) and 
	where $A_p(\cdot)$ is the Eulerian-polynomial of order $p$.
	
	Let $\lambda_t$ denote the arrival intensity at time $t$, and \(u\in \mathbb{R} \to \Phi_{\lambda_t}(u)=\mathbb{E}[e^{u \lambda_t}]\) its MGF given by Corollary~\ref{coro:mgf_lambda}. Then \eqref{eq:moment_Yp} rewrites
	\begin{align}
		\mathbb{E}[\lambda_t^p\mathbf{1}_{\{\lambda_t>\lambda_*\}}]
		=
		\int_0^{+\infty} \sum_{j=0}^{p} \binom{p}{j} \frac{\lambda_*^{p-j}\Gamma(j+1)}{\pi} \Re\!\left( \frac{e^{-(\alpha_1-\mathrm{i}u_1)\lambda_*} \Phi_{\lambda_t}(\alpha_1-\mathrm{i}u_1)}
		{(\alpha_1-\mathrm{i}u_1)^{j+1}}
		\right)
		du_1 \,,
		\label{eq:moment_lambda_t}
	\end{align}

	with \(\lambda_*\in \mathbb{R}_+\).
	
	Let $\bar{q}_t$ denote the number of customers in the system over the time interval \([0,\; t]\), and \(u\in \mathbb{R} \to \Phi_{\bar{q}_t}(u)=\mathbb{E}[e^{u \int_0^t q_sds}]\) its MGF given by Corollary~\ref{coro:mgf_integral_q}. Then \eqref{eq:moment_Yp} rewrites
	\begin{align}
		\mathbb{E}[\bar{q}_t^p\mathbf{1}_{\{\bar{q}_t>\bar{q}_*\}}]
		=
		\int_0^{+\infty} \sum_{j=0}^{p} \binom{p}{j} \frac{{\bar{q}_*}^{p-j}\Gamma(j+1)}{\pi} \Re\!\left( \frac{e^{-(\alpha_1-\mathrm{i}u_1)\bar{q}_*} \Phi_{\bar{q}_t}(\alpha_1-\mathrm{i}u_1)}
		{(\alpha_1-\mathrm{i}u_1)^{j+1}}
		\right)
		du_1 \,,
		\label{eq:moment_bar_q_t}
	\end{align}
	
	with \(\bar{q}_*\in \mathbb{R}_+\).
	
	%\begin{align}
	%\mathbb{E}[q_t^p \mathbf{1}_{\{q_t>q_*\}}]
	%&=
	%%q_*^p\,\mathbb{P}(q_t>q_*)  \nonumber\\
	%q_*^p	\left(\frac12+\frac{1}{\pi}\int_0^\infty
	%\Im\!\left(
	%\frac{e^{-iu_1q_*}\Phi_{q_t}(iu_1)}{u_1}
	%\right)\,du_1
	%\right)		\nonumber\\
	%&\quad+
	%\int_0^{+\infty}
	%\sum_{j=1}^{p}
	%\binom{p}{j}
	%\frac{q_*^{p-j}\Gamma(j+1)}{\pi}
	%\Re\!\left(
	%\frac{e^{-(\alpha_1-\mathrm{i}u_1)q_*}
		%\Phi_{q_t}(\alpha_1-\mathrm{i}u_1)}
	%{(\alpha_1-\mathrm{i}u_1)^{j+1}}
	%\right)du_1.
	%\label{eq:Momentq_t}
	%\end{align}
	%
\end{proposition}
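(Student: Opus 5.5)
The plan is to obtain each of the three identities by specialising Proposition~\ref{prop:Expected_shortfall_Fourier}, checking in each case that the hypotheses hold for the relevant MGF. The only new ingredient is the closed form of the tail series in \eqref{eq:moment_q_t}.

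\emph{The variable $q_t$.} Since $q_t$ takes values in $\mathbb{N}$, the discrete formula \eqref{eq:moment_Yp_N} applies with $Y=q_t$ and $y_*=q_*$. This gives \eqref{eq:moment_q_t_series} directly.

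The MGF $\Phi_{q_t}$ of Corollary~\ref{coro:mgf_q} is evaluated here at purely imaginary arguments, where it is a characteristic function and hence bounded by one. For the derivation I would write
\[
\mathbb{E}[q_t^p\mathbf{1}_{\{q_t>q_*\}}]=\sum_{k>q_*}k^p\,\mathbb{P}(q_t=k),
\qquad
\mathbb{P}(q_t=k)=\frac{1}{2\pi}\int_{-\pi}^{\pi}\Phi_{q_t}(\mathrm{i}u_1)e^{-\mathrm{i}u_1k}\,du_1 .
\]
To exchange sum and integral, I would insert a damping factor $r^k$ with $r\in(0,1)$ and let $r\uparrow1$. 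Alternatively, the exchange can be read in the distributional sense; this is acceptable because $\mathbb{E}[q_t^p]<\infty$ by Proposition~\ref{prop:moment_lambda_q}. Taking real parts is legitimate since the left-hand side is real.

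For \eqref{eq:moment_q_t}, split $\sum_{k\ge q_*+1}=\sum_{k\ge1}-\sum_{k=1}^{q_*}$. Then use the classical Eulerian-polynomial generating identity
\[
\sum_{k\ge1}k^p x^k=\frac{x\,A_p(x)}{(1-x)^{p+1}}, \qquad |x|<1,
\]
with $x=e^{-\mathrm{i}u_1}$, understood as the Abel limit $x=re^{-\mathrm{i}u_1}$, $r\uparrow1$.

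\emph{The variables $\lambda_t$ and $\bar q_t$.} Both are $\mathbb{R}_+$-valued, so \eqref{eq:moment_lambda_t} and \eqref{eq:moment_bar_q_t} are the continuous formula \eqref{eq:moment_Yp}. Take $Y=\lambda_t$ with $\Phi_{\lambda_t}$ from Corollary~\ref{coro:mgf_lambda}, and $Y=\bar q_t$ with $\Phi_{\bar q_t}$ from Corollary~\ref{coro:mgf_integral_q}. Only the hypotheses need checking. First, choose $\alpha_1$ inside the domain of finiteness, namely $0<\alpha_1<u^*$ with $u^*$ given by those corollaries; for $\lambda_t$ this means $\alpha_1<2\kappa/(\sigma^2(1-e^{-\kappa t}))$. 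Then $|\Phi_Y(\alpha_1-\mathrm{i}u_1)|\le\Phi_Y(\alpha_1)<\infty$. Since $|\alpha_1-\mathrm{i}u_1|^{-(j+1)}$ is integrable on $\mathbb{R}_+$ for $j\ge1$, integrability follows for those indices.

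\emph{Main obstacle.} The hard case is $j=0$, where the factor is only $O(1/u_1)$ and decay of $\Phi_Y$ itself is needed. For $\lambda_t$ the explicit form shows $|\Phi_{\lambda_t}(\alpha_1-\mathrm{i}u_1)|\sim C|u_1|^{-2\kappa\theta/\sigma^2}$, and the Feller condition gives $2\kappa\theta/\sigma^2\ge1$, so together with the $1/u_1$ factor the integrand is integrable. For $\bar q_t$, whose MGF involves ${}_0F_1$ with parameter-dependent indices, I would not attempt explicit asymptotics. Instead I would argue that the $j=0$ term equals $\mathbb{P}(\bar q_t>\bar q_*)$ through the standard damped Fourier inversion of $e^{\alpha_1 y}\mathbf{1}_{\{y>y_*\}}$, interpreted as an improper integral $\lim_{U\to\infty}\int_0^U$. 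That limit exists by the Gil-Pelaez-type argument because $\bar q_t$ has no atom at $\bar q_*$. This is the step I expect to require the most care.
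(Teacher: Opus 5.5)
\textbf{The discrete part has a genuine gap.} Your route for $q_t$ (Abel damping plus the Eulerian identity) is the paper's. Neither argument handles $u_1=0$ when passing from the damped series to the closed-form integrand of \eqref{eq:moment_q_t}. Put $z_r=re^{-\mathrm{i}u_1}$. The boundary function $e^{-\mathrm{i}u_1}A_p(e^{-\mathrm{i}u_1})/(1-e^{-\mathrm{i}u_1})^{p+1}$ has a pole of order $p+1$ at $u_1=0$, since $A_p(1)=p!$. The damped kernels converge to it only pointwise on $u_1\neq0$, not in $L^1(-\pi,\pi)$, so you cannot substitute $x=e^{-\mathrm{i}u_1}$ inside the integral.

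For $p=1$, $e^{-\mathrm{i}u}/(1-e^{-\mathrm{i}u})^2=-1/(4\sin^2(u/2))$. The integrand then behaves like $-\Re\Phi_{q_t}(\mathrm{i}u_1)/(4\sin^2(u_1/2))\sim-u_1^{-2}$, so the integral diverges; the same happens for every $p\ge1$.

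For $p=0$ the integrand is bounded, but the value is wrong. Here $\Re\sum_{k\ge1}z_r^k=\tfrac12\big(P_r(u_1)-1\big)$, where $P_r(u)=(1-r^2)/(1-2r\cos u+r^2)$ is the Poisson kernel. It concentrates at $0$ and contributes $\tfrac12\Phi_{q_t}(0)=\tfrac12$ in the limit, which the pointwise limit $-\tfrac12$ loses. Expanding $\Phi_{q_t}(\mathrm{i}u_1)=\sum_m\mathbb{P}(q_t=m)e^{\mathrm{i}mu_1}$ and using $\frac{1}{2\pi}\int_{-\pi}^{\pi}\cot(u/2)\sin(mu)\,du=1$ for $m\ge1$, the literal right-hand side of \eqref{eq:moment_q_t} equals $\mathbb{P}(q_t>q_*)-\tfrac12$.

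The paper's remark that $\{u_1=0\}$ is Lebesgue-null does not repair this, and your proposal does not address it. Your damping argument (uniform convergence for fixed $r$, then monotone convergence as $r\uparrow1$) does prove
\[
\mathbb{E}[q_t^p\mathbf{1}_{\{q_t>q_*\}}]=\lim_{r\uparrow1}\frac{1}{2\pi}\int_{-\pi}^{\pi}\Re\Big(\Phi_{q_t}(\mathrm{i}u_1)\Big[\frac{z_rA_p(z_r)}{(1-z_r)^{p+1}}-\sum_{k=1}^{q_*}k^pz_r^k\Big]\Big)\,du_1 ,
\]
with the limit outside the integral. Equivalently, it proves a distributional pairing that keeps the terms supported at $u_1=0$. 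You must state explicitly that this is the meaning of \eqref{eq:moment_q_t}.

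\textbf{The continuous part takes a different route and is essentially sound, with corrections.} You specialise Proposition~\ref{prop:Expected_shortfall_Fourier} and check its hypotheses for $\lambda_t$ and $\bar q_t$. The paper instead proves the representation here directly: binomial split into $(X-x_*)_+^j$, the Fourier--Laplace formula for $v_+^j$, then Fubini. It explicitly defers the proof of that proposition to this argument, so you are leaning on the external reference.

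Your estimate $|\Phi_{\lambda_t}(\alpha_1-\mathrm{i}u_1)|\sim C|u_1|^{-2\kappa\theta/\sigma^2}$ correctly verifies the $j=0$ hypothesis; any $\kappa\theta>0$ suffices, so Feller is not needed. Your improper-integral treatment of $j=0$ is more careful than the paper's. Fubini is never available for $j=0$, since $\int_0^\infty|\alpha_1-\mathrm{i}u_1|^{-1}du_1=\infty$ whatever the decay of $\Phi_X$, so the paper's Dirichlet-test remark does not justify the interchange.

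However, your claim that $\bar q_t$ has no atom at $\bar q_*$ is unjustified and can fail. On the event of no arrival and no departure in $[0,t]$, which has positive probability, $\bar q_t=q_0t$. You therefore need $\bar q_*\neq q_0t$; otherwise the $j=0$ integral returns $\mathbb{P}(\bar q_t>\bar q_*)+\tfrac12\mathbb{P}(\bar q_t=\bar q_*)$. You also need to show there are no other atoms. Given $n\ge1$ jumps, $\bar q_t$ is affine in the jump times with coefficients $\pm1$, and these times have a joint density conditionally on the intensity path and the jump types.

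The same atom keeps $|\Phi_{\bar q_t}(\alpha_1-\mathrm{i}u_1)|$ bounded away from $0$ as $u_1\to\infty$. So the $j=0$ integrability hypothesis of Proposition~\ref{prop:Expected_shortfall_Fourier} actually fails for $\bar q_t$, and you cannot cite it there. You must run the term-by-term argument yourself and read \eqref{eq:moment_bar_q_t} as an improper integral.
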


\begin{proof}
	We first consider the discrete random variable \(q_t\). By Corollary~\ref{coro:mgf_q}, its MGF \(\Phi_{q_t}\) is bounded and of finite variation on \([-\pi,\pi]\). To rewrite the Fourier series in \eqref{eq:moment_q_t_series}, introduce an Abel damping parameter \(r\in(0,1)\) and set $z_r:=r e^{-\mathrm{i}u_1}.$ 	Since \(|z_r|=r<1\), the series \(\sum_{k=1}^{\infty}k^p z_r^k\) is absolutely convergent. For every integer \(p\geq0\), the classical Eulerian-polynomial identity gives
	\[
	\sum_{k=1}^{\infty} k^p z^k
	=
	\frac{zA_p(z)}{(1-z)^{p+1}},
	\qquad |z|<1,
	\]
	where \(A_0(z)=1\), and for \(p\geq1\),
	\[
	A_p(z)
	=
	\sum_{m=0}^{p-1}
	\left\langle {p \atop m} \right\rangle z^m,
	\qquad
	\left\langle {p \atop m} \right\rangle
	=
	\sum_{\ell=0}^{m+1}
	(-1)^\ell
	\binom{p+1}{\ell}
	(m+1-\ell)^p .
	\]
	Therefore,
	\begin{align}
		\sum_{k=q_*+1}^{\infty} k^p r^k e^{-\mathrm{i}u_1k}
		=
		\sum_{k=q_*+1}^{\infty} k^p z_r^k  %\nonumber\\
		%&=	\sum_{k=1}^{\infty} k^p z_r^k-sum_{k=1}^{q_*} k^pz_r^k   \nonumber\\
		=\frac{z_rA_p(z_r)}{(1-z_r)^{p+1}}-\sum_{k=1}^{q_*} k^p z_r^k .
		\label{eq:truncated_series_eulerian_abel}
	\end{align}
	For fixed \(r\in(0,1)\), this damped series is absolutely and uniformly
	convergent in \(u_1\in[-\pi,\pi]\). Indeed,
	\[
	\left|k^p r^k e^{-\mathrm{i}u_1k}\right|
	=
	k^p r^k,
	\qquad
	\sum_{k=q_*+1}^{\infty}k^p r^k<\infty,
	\]
	because the ratio test gives
	\[
	\lim_{k\to\infty}
	\frac{(k+1)^p r^{k+1}}{k^p r^k}
	=
	r<1.
	\]
	Thus the Fourier series in \eqref{eq:moment_q_t_series} is interpreted in the Abel
	sense:
	\[
	\sum_{k=q_*+1}^{\infty} k^p e^{-\mathrm{i}u_1k}
	:=
	\lim_{r\uparrow1}
	\sum_{k=q_*+1}^{\infty} k^p r^k e^{-\mathrm{i}u_1k}.
	\]
	Using \eqref{eq:truncated_series_eulerian_abel}, for every \(u_1\neq0\),
	\begin{align}
		\lim_{r\uparrow1}
		\sum_{k=q_*+1}^{\infty} k^p r^k e^{-\mathrm{i}u_1k}
		&=
		\frac{e^{-\mathrm{i}u_1}A_p(e^{-\mathrm{i}u_1})}
		{\left(1-e^{-\mathrm{i}u_1}\right)^{p+1}}
		-
		\sum_{k=1}^{q_*} k^p e^{-\mathrm{i}u_1k}.
		\label{eq:truncated_series_eulerian_boundary}
	\end{align}
	The only singularity is at \(u_1=0\), corresponding to the divergence
	of \(\sum_{k=q_*+1}^{\infty}k^p\). 
	%Hence the expression is understood through	the Abel limit, and 
	Since the singular point has Lebesgue measure zero in
	\([-\pi,\pi]\), the integral in \eqref{eq:moment_q_t} is well defined.
	%in this Abel-regularised sense.
	
	We now treat the two continuous random variables \(\lambda_t\) and \(\bar q_t\) simultaneously. Let \(X\) denote either \(\lambda_t\) or \(\bar q_t\), let \(x_*\) denote the corresponding threshold, and let $\Phi_X(u):=\mathbb{E}[e^{uX}]$ be the associated MGF. Thus
\[
(X,x_*,\Phi_X)
=
(\lambda_t,\lambda_*,\Phi_{\lambda_t})
\quad\text{or}\quad
(\bar q_t,\bar q_*,\Phi_{\bar q_t}).
\]
By the binomial expansion
\begin{align}
	\mathbb{E}\!\left[X^p\mathbf{1}_{\{X>x_*\}}\right]
	&=
	\mathbb{E}\!\left[(X-x_*+x_*)^p\mathbf{1}_{\{X>x_*\}}\right]  = \sum_{j=0}^{p} \binom{p}{j}x_*^{p-j} \mathbb{E}\!\left[(X-x_*)_+^j\right]. 	\label{eq:binomial_tail_decomposition}
\end{align}
Set \(V:=X-x_*\). For \(\alpha_1>0\), the Fourier--Laplace representation
\citep[Eq.~3.2(3)]{BatemanEderlyi1954} gives, for every \(j=0,\ldots,p\), that 
\[
v_+^j
=
\frac{\Gamma(j+1)e^{\alpha_1 v}}{\pi}
\Re\left(
\int_0^{+\infty}
\frac{e^{-\mathrm{i}u_1 v}}
{(\alpha_1-\mathrm{i}u_1)^{j+1}}
\,du_1
\right).
\]
Taking expectations and applying Fubini's theorem yields:
\begin{align}
	\mathbb{E}[V_+^j]
	&=
	\frac{\Gamma(j+1)}{\pi}
	\int_0^{+\infty}
	\Re\left(
	\frac{\mathbb{E}\!\left[
		e^{(\alpha_1-\mathrm{i}u_1)V}
		\right]}
	{(\alpha_1-\mathrm{i}u_1)^{j+1}}
	\right)\,du_1                                      \nonumber\\
	&=
	\frac{\Gamma(j+1)}{\pi}
	\int_0^{+\infty}
	\Re\left(
	\frac{
		e^{-(\alpha_1-\mathrm{i}u_1)x_*}
		\Phi_X(\alpha_1-\mathrm{i}u_1)}
	{(\alpha_1-\mathrm{i}u_1)^{j+1}}
	\right)\,du_1 .
	\label{eq:positive_part_transform}
\end{align}
Substituting \eqref{eq:positive_part_transform} into
\eqref{eq:binomial_tail_decomposition} gives the general formula
\begin{align}
	\mathbb{E}\!\left[X^p\mathbf{1}_{\{X>x_*\}}\right]
	&=
	\int_0^{+\infty}
	\sum_{j=0}^{p}
	\binom{p}{j}
	\frac{x_*^{p-j}\Gamma(j+1)}{\pi}
	\Re\left(
	\frac{
		e^{-(\alpha_1-\mathrm{i}u_1)x_*}
		\Phi_X(\alpha_1-\mathrm{i}u_1)}
	{(\alpha_1-\mathrm{i}u_1)^{j+1}}
	\right)\,du_1 .
	\label{eq:continuous_tail_moment_general}
\end{align}
Choosing \(X=\lambda_t\) and \(x_*=\lambda_*\) gives \eqref{eq:moment_lambda_t}, while choosing \(X=\bar q_t\) and  \(x_*=\bar q_*\) gives \eqref{eq:moment_bar_q_t}.  It remains only to justify the interchange of expectation and integration in \eqref{eq:positive_part_transform}. For \(j=1,\ldots,p\), we use
\[
\left|
\frac{\Phi_X(\alpha_1-\mathrm{i}u_1)}
{(\alpha_1-\mathrm{i}u_1)^{j+1}}
\right|
\leq
\frac{\Phi_X(\alpha_1)}
{\left|\alpha_1-\mathrm{i}u_1\right|^{j+1}},
\]
which is integrable on \([0,\infty)\) because \(j+1>1\) and \(\Phi_X(\alpha_1)<\infty\). The finiteness of the MGF at \(\alpha_1\) follows from the corresponding MGF formula: Corollary~\ref{coro:mgf_lambda} for \(X=\lambda_t\), and Corollary~\ref{coro:mgf_integral_q} for \(X=\bar q_t\). For \(j=0\), the same MGF expressions imply that the map $u_1\longmapsto \Phi_X(\alpha_1-\mathrm{i}u_1)$ is bounded and of finite variation on \([0,\infty)\). Hence Dirichlet's test for oscillatory integrals ensures the convergence of
\[
\int_0^{+\infty}
\frac{\Phi_X(\alpha_1-\mathrm{i}u_1)}
{\alpha_1-\mathrm{i}u_1}\,du_1,
\]
and therefore the required Fubini interchange is justified. This completes the proof.
\end{proof}

The aforementioned result concerns risk measures for a single variable. Of particular interest is the multivariate case involving joint moments conditional on one variable exceeding a given threshold. More precisely, consider the mixed truncated moment 
\[ 
	\mathbb{E}\!\left[q_t^r \lambda_t^p \mathbf{1}_{\{\lambda_t>\lambda_*\}}\right]\,, 
\] 
which represents the joint moment of order \((r,p)\) of the pair \((q_t,\lambda_t)\), restricted to the event that the arrival intensity exceeds the threshold \(\lambda_*\). In this expression, the number in the system \(q_t\) can be replaced by \(\bar{q}_t\). Alternatively, the pairs \((q_t,\bar{q}_t)\) or \((\bar{q}_t,q_t)\) may be considered, depending on which variable is subject to truncation. The following proposition provides an analytical representation based on the joint MGF and the result of Proposition~\ref{prop:JointExpectedShortfallDerivativeFourierTransform}.

\begin{proposition}
	\label{prop:JointExpectedShortfallDerivativeFourierTransform_queue}
	
	Let $(\lambda_t,q_t)\in\mathbb{R}_+ \times \mathbb{N}$ be a random vector with joint MGF
	$\Phi_{(\lambda_t,q_t)}(v_1,v_2)$.
	Let $\Phi_{(\lambda_t,q_t)}^{(0,r)}$ denote the $r$-th derivative with
	respect to the second argument. For $\lambda_*>0$ and $\alpha_1>0$,
	\begin{align}
		\mathbb{E}[q_t^r\lambda_t^p \mid \lambda_t>\lambda_*]
		=
		\frac{\mathbb{E}[q_t^r\lambda_t^p
			\mathbf{1}_{\{\lambda_t>\lambda_*\}}]}
		{\mathbb{P}(\lambda_t>\lambda_*)},
	\end{align}
	where
	\begin{align}
		\mathbb{E}[q_t^r \lambda_t^p
		\mathbf{1}_{\{\lambda_t>\lambda_*\}}]
		&=		\int_{0}^{+\infty}
		\sum_{j=0}^{p}
		\binom{p}{j}
		\frac{\lambda_*^{p-j}\Gamma(j+1)}{\pi}
		\Re\!\left(
		\frac{e^{-(\alpha_1-\mathrm{i}u_1)\lambda_*}
			\Phi_{(\lambda_t,q_t)}^{(0,r)}(\alpha_1-\mathrm{i}u_1,0)}
		{(\alpha_1-\mathrm{i}u_1)^{j+1}}
		\right)du_1 .
		\label{eq:ExpectationZqYp-1dim_queue}
	\end{align}
	
		Let $(q_t,\lambda_t)\in \mathbb{N} \times \mathbb{R}_+ $ be a random vector with joint MGF 	$\Phi_{(q_t,\lambda_t)}(v_1,v_2)$. 	Let $\Phi_{(q_t,\lambda_t)}^{(0,r)}$ denote the $r$-th derivative with 	respect to the second argument. For $q_*>0$
	\begin{align}
		\mathbb{E}[\lambda_t^r q_t^p\mid q_t>q_*]
		=
		\frac{\mathbb{E}[\lambda_t^r q_t^p \mathbf{1}_{\{q_t>q_*\}}]}
		{\mathbb{P}(q_t>q_*)},
	\end{align}
	where
	\begin{align}
		\mathbb{E}[\lambda_t^r q_t^p \mathbf{1}_{\{q_t>q_*\}}]
		&=		\frac{1}{2\pi}\int_{-\pi}^{+\pi} \Re \left( \Phi_{(q_t,\lambda_t)}^{(0,r)}(\mathrm{i}u_1,0)\left(\sum_{k=q_*+1}^{+\infty}k^pe^{- \mathrm{i} u_1 k}\right)\right)du_1\,.\label{eq:ExpectationZqYp-1dim_queueB}
	\end{align}
		
	Let $(\lambda_t,\bar{q}_t)\in \mathbb{R}_+ \times \mathbb{R}_+ $ be a random vector with joint MGF 	$\Phi_{(\lambda_t,\bar{q}_t)}(v_1,v_2)$. 	Let $\Phi_{(\lambda_t,\bar{q}_t)}^{(0,r)}$ denote the $r$-th derivative with respect to the second argument. For $\bar{q}_*>0$ and $\alpha_1>0$,
	\begin{align}
		\mathbb{E}[\bar{q}_t^r \lambda_t^p \mid \lambda_t>\lambda_*]
		=
		\frac{\mathbb{E}[\bar{q}_t^r \lambda_t^p \mathbf{1}_{\{\lambda_t>\lambda_*\}}]}
		{\mathbb{P}(\lambda_t>\lambda_*)},
	\end{align}
	where
	\begin{align}
		\mathbb{E}[\bar{q}_t^r \lambda_t^p \mathbf{1}_{\{\lambda_t>\lambda_*\}}]
		&=		\int_{0}^{+\infty}
		\sum_{j=0}^{p}
		\binom{p}{j}
		\frac{\lambda_*^{p-j}\Gamma(j+1)}{\pi}
		\Re\!\left(
		\frac{e^{-(\alpha_1-\mathrm{i}u_1)\lambda_*}
			\Phi_{(\lambda_t,\bar{q}_t)}^{(0,r)}(\alpha_1-\mathrm{i}u_1,0)}
		{(\alpha_1-\mathrm{i}u_1)^{j+1}}
		\right)du_1 .\label{eq:Expectation_bar_qt_r_lambdat_p-1dim}
	\end{align}
	
\end{proposition}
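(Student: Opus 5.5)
The plan is to reduce each of the three identities to the one-dimensional representations already established: \eqref{eq:continuous_tail_moment_general} for the continuous truncating variable, and \eqref{eq:moment_q_t_series} for the discrete one. The bridge is to differentiate the joint MGF with respect to its second argument and evaluate that argument at $0$. Concretely, for a pair $(Y,Z)$ with joint MGF $\Phi_{(Y,Z)}$, differentiating under the expectation $r$ times in $v_2$ gives
\begin{align*}
\Phi_{(Y,Z)}^{(0,r)}(v_1,0)=\mathbb{E}\big[Z^r e^{v_1 Y}\big].
\end{align*}
This is the MGF of $Y$ under the finite positive measure $Z^r\,d\mathbb{P}$. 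The joint MGFs are available explicitly: Theorem~\ref{theo:mgf_triplet} gives them with $t=0$ and $\mathbf{u}=(v_1,v_2,0)$ for $(\lambda_t,q_t)$, and with $\mathbf{u}=(v_1,0,v_2)$ for $(\lambda_t,\bar q_t)$. They are finite for $v_1,v_2$ in a neighbourhood below the thresholds $u_1^*,u_2^*,u_3^*$, and each is an exponential of functions analytic in $\mathbf{u}$. Hence differentiation under $\mathbb{E}$ at $v_2=0$ is legitimate by dominated convergence, because $Z^r e^{\Re(v_1) Y}\le C\, e^{\Re(v_1)Y+\epsilon Z}$ for small $\epsilon>0$. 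For the complex argument $v_1=\alpha_1-\mathrm{i}u_1$ we have $|e^{v_1Y}|=e^{\alpha_1 Y}$, so the same bound applies.

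\emph{Continuous truncating variable (first and third claims).} I repeat the proof of Proposition~\ref{prop:qt_ExpectedShortfallFourierTransform} with $\mathbb{E}$ replaced by $\mathbb{E}[Z^r\,\cdot\,]$, taking $(Y,Z)=(\lambda_t,q_t)$ or $(\lambda_t,\bar q_t)$. The binomial expansion \eqref{eq:binomial_tail_decomposition} gives
\begin{align*}
Z^rY^p\mathbf{1}_{\{Y>\lambda_*\}}=\sum_{j=0}^{p}\binom{p}{j}\lambda_*^{p-j}Z^r(Y-\lambda_*)_+^j.
\end{align*}
The Bateman representation of $v_+^j$ is inserted pointwise and multiplied by $Z^r$. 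Taking expectations and applying Fubini turns $\mathbb{E}[Z^re^{(\alpha_1-\mathrm{i}u_1)(Y-\lambda_*)}]$ into $e^{-(\alpha_1-\mathrm{i}u_1)\lambda_*}\Phi^{(0,r)}(\alpha_1-\mathrm{i}u_1,0)$. This yields \eqref{eq:ExpectationZqYp-1dim_queue} and \eqref{eq:Expectation_bar_qt_r_lambdat_p-1dim}. Dividing by $\mathbb{P}(\lambda_t>\lambda_*)$ gives the conditional forms, as in \eqref{eq:conditional_moment_ZrYp}.

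\emph{Discrete truncating variable (second claim).} Here $(Y,Z)=(q_t,\lambda_t)$. I use Fourier inversion on the circle for the signed measure $k\mapsto\mathbb{E}[\lambda_t^r\mathbf{1}_{\{q_t=k\}}]$, whose characteristic function is $\Phi_{(q_t,\lambda_t)}^{(0,r)}(\mathrm{i}u_1,0)=\sum_k \mathbb{E}[\lambda_t^r\mathbf{1}_{\{q_t=k\}}]e^{\mathrm{i}u_1k}$. This gives
\begin{align*}
\mathbb{E}[\lambda_t^r\mathbf{1}_{\{q_t=k\}}]=\frac{1}{2\pi}\int_{-\pi}^{\pi}\Phi_{(q_t,\lambda_t)}^{(0,r)}(\mathrm{i}u_1,0)e^{-\mathrm{i}u_1k}\,du_1.
\end{align*}
Since the left side is real, the integrand may be replaced by its real part. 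I then multiply by $k^p$ and sum over $k>q_*$. The sum is interchanged with the integral in the Abel sense, exactly as in the proof of Proposition~\ref{prop:qt_ExpectedShortfallFourierTransform}: insert $r^k$, use the Eulerian closed form \eqref{eq:truncated_series_eulerian_abel}, and let the damping parameter tend to $1$. The left side converges by monotone convergence, since $\mathbb{E}[\lambda_t^rq_t^p]<\infty$ by Proposition~\ref{prop:moment_lambda_q}. This yields \eqref{eq:ExpectationZqYp-1dim_queueB}.

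The main obstacle is justifying Fubini in the $j=0$ terms of the continuous cases. There the kernel decays only like $1/u_1$, so one needs $u_1\mapsto\Phi^{(0,r)}(\alpha_1-\mathrm{i}u_1,0)$ to be bounded and of bounded variation on $[0,\infty)$. I would obtain this from the explicit form: $\Phi^{(0,r)}$ equals $\Phi$ times a polynomial in $\partial_{v_2}$-derivatives of $a$, $b\lambda_0$ and $dq_0$. These derivatives are smooth in $u_1$, with limits as $u_1\to\infty$ inherited from the Riccati solution. The Dirichlet test then applies as before. A secondary technical point is the Abel-limit interchange in the discrete case near the singularity at $u_1=0$. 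There I would argue via dominated convergence away from $0$, together with smoothness of $\Phi^{(0,r)}(\mathrm{i}u_1,0)$ in $u_1$.
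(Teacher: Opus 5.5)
\textbf{Where your plan matches the paper.} For the two claims truncated on $\lambda_t$, your argument is the paper's: binomial expansion, Bateman's representation of $v_+^j$ multiplied by $Z^r$, and $\mathbb{E}[Z^re^{(\alpha_1-\mathrm{i}u_1)\lambda_t}]=\Phi^{(0,r)}(\alpha_1-\mathrm{i}u_1,0)$. The bound $|e^{v_1Y}|=e^{\alpha_1Y}$ that you note already settles the $j\ge1$ Fubini step by Tonelli, which the paper leaves as ``lengthy but feasible''.

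\textbf{The gap: the discrete claim.} The paper's proof says nothing about \eqref{eq:ExpectationZqYp-1dim_queueB}, and that is where your plan breaks. You cannot let the Abel parameter tend to $1$ inside the $u_1$-integral; call it $\rho$, to avoid the clash with the moment order $r$. Set $S_\rho(u_1)=\sum_{k>q_*}k^p\rho^ke^{-\mathrm{i}u_1k}$. For each $\rho<1$ the damped identity $\frac{1}{2\pi}\int_{-\pi}^{\pi}\Re\big(\Phi^{(0,r)}_{(q_t,\lambda_t)}(\mathrm{i}u_1,0)S_\rho(u_1)\big)du_1=\sum_{k>q_*}k^p\rho^k\,\mathbb{E}[\lambda_t^r\mathbf{1}_{\{q_t=k\}}]$ is correct. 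But $S_\rho$ is not dominated near $u_1=0$; it concentrates mass there.

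For $p=0$, $S_\rho(u_1)=(\rho e^{-\mathrm{i}u_1})^{q_*+1}(1-\rho e^{-\mathrm{i}u_1})^{-1}$ and $\Re\,(1-\rho e^{-\mathrm{i}u_1})^{-1}=\tfrac12+\tfrac12P_\rho(u_1)$, where $P_\rho(u_1)=(1-\rho^2)/(1-2\rho\cos u_1+\rho^2)$ is the Poisson kernel. So the limit of the integrals equals the integral of the pointwise limit \eqref{eq:truncated_series_eulerian_boundary} plus $\tfrac12\Phi^{(0,r)}_{(q_t,\lambda_t)}(0,0)$. Read as a Lebesgue integral, the right-hand side of \eqref{eq:ExpectationZqYp-1dim_queueB} therefore equals $\mathbb{E}[\lambda_t^r\mathbf{1}_{\{q_t>q_*\}}]-\tfrac12\mathbb{E}[\lambda_t^r]$. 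For instance, with $r=0$ and $q_t\equiv k_0>q_*$ it returns $\tfrac12$, not $1$.

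For $p\ge1$ it is worse: the boundary function behaves like $\Gamma(p+1)(\mathrm{i}u_1)^{-(p+1)}$. For $p=1$ it is $-1/(4\sin^2(u_1/2))$ minus a finite sum, so the integrand is of order $-\mathbb{E}[\lambda_t^r]u_1^{-2}$ and the integral diverges.

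Hence ``dominated convergence away from $0$ plus smoothness of $\Phi^{(0,r)}$'' cannot close the argument, because the contribution of a shrinking neighbourhood of $0$ does not vanish. The paper's proof of \eqref{eq:moment_q_t} makes exactly the same move (a singular point of measure zero is not enough), so citing it does not help.

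\textbf{What you can prove instead.} You can keep the sum, or the Abel limit, outside the integral. Alternatively, avoid the divergent tail series altogether:
\[
\mathbb{E}[\lambda_t^rq_t^p\mathbf{1}_{\{q_t>q_*\}}]=\mathbb{E}[\lambda_t^rq_t^p]-\sum_{k=0}^{q_*}k^p\,\frac{1}{2\pi}\int_{-\pi}^{\pi}\Re\Big(\Phi^{(0,r)}_{(q_t,\lambda_t)}(\mathrm{i}u_1,0)e^{-\mathrm{i}u_1k}\Big)du_1,
\]
with $\mathbb{E}[\lambda_t^rq_t^p]$ obtained by differentiating the joint MGF. Formula \eqref{eq:ExpectationZqYp-1dim_queueB} as displayed is valid only if the series is read as a distribution, with $\delta$-type and finite-part terms at $u_1=0$.

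\textbf{A smaller point in the continuous cases.} For $j=0$, Dirichlet's test only gives convergence of the improper $u_1$-integral. It does not justify exchanging $\mathbb{E}$ and $\int_0^\infty$, since the double integral is not absolutely convergent. Instead, truncate to $[0,U]$, exchange there, and let $U\to\infty$ by dominated convergence. This works because $\frac1\pi\Re\int_0^U e^{-\mathrm{i}uv}(\alpha_1-\mathrm{i}u)^{-1}du$ is bounded uniformly in $U$ and $v$, so $C\,Z^re^{\alpha_1(Y-\lambda_*)}$ dominates. The limiting value $\tfrac12$ at $Y=\lambda_*$ is irrelevant because $\lambda_t$ has a density. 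With this argument no bounded-variation property of $\Phi^{(0,r)}$ is needed.
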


\begin{proof}
	Proceeding as in the previous proof, we obtain
	\begin{align*}
		\mathbb{E}[q_t^r \lambda_t^p \mathbf{1}_{\lbrace \lambda_t>\lambda_* \rbrace}]		&=\sum_{j=0}^p
		\binom{p}{j}
		\lambda_*^{p-j}
		\mathbb{E}[q_t^{r}(\lambda_t-\lambda_*)_+^j],
	\end{align*}
	where
	\begin{align*}
		\mathbb{E}\!\left[q_t^r(\lambda_t-\lambda_*)_+^j\right]
		= \frac{\Gamma(j+1)}{2\pi}
		\int_{-\infty}^{+\infty}
		\Re \left(
		\frac{e^{-(\alpha_1-\mathrm{i}u_1)\lambda_*}
			\mathbb{E}\!\left[q_t^r e^{(\alpha_1-\mathrm{i}u_1)\lambda_t}\right]}
		{(\alpha_1-\mathrm{i}u_1)^{j+1}}
		\right)
		\,du_1,
	\end{align*}
	for $\alpha_1>0$. Moreover,
	\[
	\mathbb{E}\!\left[q_t^r e^{(\alpha_1-\mathrm{i}u_1)\lambda_t}\right]
	= \left.\frac{d^r}{dv_2^r}
	\mathbb{E}\!\left[e^{(\alpha_1-\mathrm{i}u_1)\lambda_t+v_2 q_t}\right]\right|_{v_2=0}
	= \Phi_{(\lambda_t,q_t)}^{(0,r)}(\alpha_1-\mathrm{i}u_1,0).
	\]
	Substituting this expression into the integral yields the desired result. The application of Fubini's theorem is justified by the integrability of the mapping
	\[
	u_1 \mapsto \frac{\Phi_{(\lambda_t,q_t)}^{(0,r)}(\alpha_1-\mathrm{i}u_1,0)}
	{(\alpha_1-\mathrm{i}u_1)^{j+1}}, \qquad j=1,\ldots,p,
	\]
	which is lenghty to check but feasible.	Similar arguments apply to \eqref{eq:Expectation_bar_qt_r_lambdat_p-1dim}.
\end{proof}

%
%Note that the term
%$\lambda_*^p\mathbb{E}[q_t^r\mathbf{1}_{\lbrace \lambda_t>\lambda_* \rbrace}]$ is less than $\lambda_*^p\mathbb{E}[q_t^r]$ which is bounded.
%
%Note that if the MGF $\Phi_{(\lambda_t,q_t)}^{(0,r)}(\alpha_1-\mathrm{i}u_1,0)$ is bounded, then 
%the function
%\[
%u_1 \mapsto \frac{\Phi_{(\lambda_t,q_t)}^{(0,r)}(\alpha_1-\mathrm{i}u_1,0)}
%{(\alpha_1-\mathrm{i}u_1)^{j+1}},
%\]
%is integrable, for $j=1,\ldots,p$ with $p\in\mathbb{N}$. \\
%
%A similar argument can be employed to compute truncated mixed moments 
%\begin{align}
	%\mathbb{E}[q_t^r\lambda_t^p \mid q_t>q_*]
	%=
	%\frac{\mathbb{E}[q_t^r\lambda_t^p
		%\mathbf{1}_{\{q_t>q_*\}}]}
	%{\mathbb{P}(q_t>q_*)},
%\end{align}
	%where for $q_*>0$  and $\alpha_1>0$
%\begin{align}
	%\mathbb{E}[q_t^r \lambda_t^p
	%\mathbf{1}_{\{q_t>q_*\}}]
	%&=		\int_{-\infty}^{+\infty}
	%\sum_{j=0}^{r}
	%\binom{r}{j}
	%\frac{q_*^{r-j}\Gamma(j+1)}{2\pi}
	%\Re\!\left(
	%\frac{e^{-(\alpha_1-\mathrm{i}u_1)q_*}
		%\Phi_{(q_t,\lambda_t)}^{(0,p)}(\alpha_1-\mathrm{i}u_1,0)}
	%{(\alpha_1-\mathrm{i}u_1)^{j+1}}
	%\right)du_1 .
	%\label{eq:ExpectationZqYp-1dim_queueB}
%\end{align}
%
%A similar result can be obtained for the truncated mixed moments 
%involving the time-integrated process. In particular,
%\begin{align}
	%\mathbb{E}\! \left.\left[\left(\int_0^t q_s\,ds\right)^r \lambda_t^p  \right| \lambda_t>\lambda_* \right]
	%= \frac{\mathbb{E}\!\left[\left(\int_0^t q_s\,ds\right)^r \lambda_t^p
		%\mathbf{1}_{\{\lambda_t>\lambda_*\}}\right]}
	%{\mathbb{E}\left[\mathbf{1}_{\{\lambda_{t} >\lambda_*\}}\right]}.
%\end{align}
%
%%\begin{align}
%%q_* = 5\\
%%\lambda_* = 2.5\\
%%\bar{q}_*=8
%%\end{align}

\subsection{Evaluation of risk measures of infinite-server queues at two dates}\label{sec:QueueRiskMeasuresTwoDates}

The time-dependent structure of the Cox--BESQ process allows us to study risk measures at two distinct dates. Let
$0<t_1<t_2$. We focus on time-lagged conditional cross moments of the form 
\[
\mathbb{E}\!\left[q_{t_2}^r q_{t_1}^p \mid q_{t_1}>q_*\right]=\frac{\mathbb{E}\!\left[q_{t_2}^r q_{t_1}^p \mathbf{1}_{\{q_{t_1}>q_*\}} \right]}{\mathbb{E}\!\left[\mathbf{1}_{\{q_{t_1}>q_*\}}\right]} ,\qquad
\mathbb{E}\!\left[\lambda_{t_2}^r q_{t_2}^p \mid \lambda_{t_1}>\lambda_*\right] = \frac{\mathbb{E}\!\left[\lambda_{t_2}^r q_{t_2}^p \mathbf{1}_{\lbrace \lambda_{t_1}>\lambda_*\}} \right]}{\mathbb{E}\!\left[\mathbf{1}_{\lbrace \lambda_{t_1}>\lambda_*\}}\right]},
\]
where $q_*>0$, $\lambda_*>0$, and $r,q\in\mathbb{N}$. We restrict the results to the triplet of variables \(\lambda_{t_1}, q_{t_1}, q_{t_2}\), but it should be clear that any combination of the variables \((\lambda_{t_1},\lambda_{t_2}, q_{t_1}, q_{t_2}, \bar{q}_{t_1}, \bar{q}_{t_2})\) could be considered.

These quantities measure the persistence of extreme number of customers in the system lengths or intensities across time and are related to the time-series literature on risk measures; see, for instance,
\citet{ChavezDemoulinGuillou2018} and \citet{GoegebeurGuillou2024}. By combining the two-date MGFs of Corollary~\ref{coro:qt_lambdat_two_dates} with Proposition~\ref{prop:JointExpectedShortfallDerivativeFourierTransform}, we obtain the following explicit representations for the two aforementioned risk measures, as shown in the following proposition.

\begin{proposition}
	\label{prop:ConditionalMomentsTwoDates}
	Let $t_1,t_2\in\mathbb{R}_+$ satisfying $0<t_1<t_2$, and let $\alpha_1>0$ be an admissible damping parameter.
	
	\begin{enumerate}
		\item Let \((q_{t_1},q_{t_2})\) the number of customers in the system at the dates \(t_1,\, t_2\) and \(\Phi_{(q_{t_1},q_{t_2})}(u_1,u_2)=\mathbb{E}\!\left[e^{u_1q_{t_1}+u_2q_{t_2}} \right]\) the MGF of this vector that is given by \eqref{eq:mgf_q_t1_t2} whilst the \(r\)-derivative of this function with respect to its second argument is denoted \(\Phi_{(q_{t_1},q_{t_2})}^{(0,r)}\). For $q_*>0$, the $(r,p)$ cross moment of $q_{t_2}$ and $q_{t_1}$  conditional on \(q_{t_1}>q_*\) is:
		\begin{align}
			\mathbb{E}\!\left[q_{t_2}^r q_{t_1}^p \mathbf{1}_{\{q_{t_1}>q_*\}}\right]
			&=
			\frac{1}{2\pi}\int_{-\pi}^{+\pi} \Re \left( \Phi_{(q_{t_1},q_{t_2})}^{(0,r)}(\mathrm{i}u_1,0)\left(\sum_{k=q_*+1}^{+\infty}k^pe^{- \mathrm{i} u_1 k}\right)\right)du_1\,.
			\label{eq:conditional_moment_q_two_dates}
		\end{align}
		
		\item Let \((\lambda_{t_1},\lambda_{t_2},q_{t_2})\) the vector of the arrival intensity at dates \(t_1,\, t_2\) and the number of customers in the system at the date \(t_2\) and \(\Phi_{(\lambda_{t_1},\lambda_{t_2},q_{t_2})}(u_1,u_2,u_3)=\mathbb{E}\!\left[ e^{u_1\lambda_{t_1} + u_2\lambda_{t_2} + u_3q_{t_2}}\right]\) the corresponding MGF given by \eqref{eq:mgf_lambda_t1_lambda_t2_q_t2}, and its \(r,p\)-derivative with respect to \(u_2,\,u_3\) is denoted \(\Phi_{(\lambda_{t_1},\lambda_{t_2},q_{t_2})}^{(0,r,p)}\).  For $\lambda_*>0$, the $r,p$-th conditional cross moment of $\lambda_{t_2},\, q_{t_2}$ given $\lambda_{t_1}>\lambda_*$ is:
		\begin{align}
			\mathbb{E}\!\left[ \lambda_{t_2}^r q_{t_2}^p \mathbf{1}_{\{ \lambda_{t_1}>\lambda_*\}}\right]
			&=
				\frac{1}{\pi}\int_0^{+\infty}
				\Re\!\left(
				\frac{ e^{-(\alpha_1-\mathrm{i}u_1)\lambda_*} \Phi_{(\lambda_{t_1},\lambda_{t_2},q_{t_2})}^{(0,r,p)}
					(\alpha_1-\mathrm{i}u_1,0,0)
				}{
					\alpha_1-\mathrm{i}u_1
				}
				\right)\,du_1
			.
			\label{eq:conditional_moment_lambda_two_dates}
		\end{align}
		
	\end{enumerate}
\end{proposition}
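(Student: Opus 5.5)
The plan is to reduce both items to the one-dimensional Fourier machinery of Proposition~\ref{prop:JointExpectedShortfallDerivativeFourierTransform}, applied with the two-date MGFs of Corollary~\ref{coro:qt_lambdat_two_dates} in place of single-date MGFs. In each case one variable carries the truncation, and the remaining variables enter only through derivatives of the MGF at the origin. The only new ingredients are justifying that the derivatives may be taken under the expectation, and the Fubini interchanges.

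\emph{Item 1.} Set $Y=q_{t_1}\in\mathbb{N}$ and $Z=q_{t_2}$.
\begin{itemize}
\item First, the MGF $\Phi_{(q_{t_1},q_{t_2})}(v_1,v_2)$ of \eqref{eq:mgf_q_t1_t2} is finite for $\Re v_1\le 0$ and $v_2$ in a real neighbourhood of $0$. This follows from the well-posedness statements of Theorem~\ref{theo:mgf_triplet} and Corollary~\ref{coro:mgf_q}, applied successively on $[t_1,t_2]$ and $[0,t_1]$ as in the proof of Proposition~\ref{prop:two_dates_mgf}.
\item Since $q_{t_2}$ has exponential moments near $0$, dominated convergence lets us differentiate $r$ times in $v_2$ under the expectation. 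This gives
\[
\Phi^{(0,r)}_{(q_{t_1},q_{t_2})}(\mathrm{i}u_1,0)=\mathbb{E}\big[q_{t_2}^r e^{\mathrm{i}u_1 q_{t_1}}\big]=\sum_{k\ge0}\mathbb{E}\big[q_{t_2}^r\mathbf{1}_{\{q_{t_1}=k\}}\big]e^{\mathrm{i}u_1k}.
\]
This is a Fourier series with absolutely summable coefficients.
\item Orthogonality of $e^{\mathrm{i}u_1k}$ on $[-\pi,\pi]$ then yields
\[
\mathbb{E}\big[q_{t_2}^r\mathbf{1}_{\{q_{t_1}=k\}}\big]=\frac{1}{2\pi}\int_{-\pi}^{\pi}\Phi^{(0,r)}_{(q_{t_1},q_{t_2})}(\mathrm{i}u_1,0)\,e^{-\mathrm{i}u_1k}\,du_1 .
\]
\item Multiplying by $k^p$ and summing over $k>q_*$ gives \eqref{eq:conditional_moment_q_two_dates}, once the sum and the integral are exchanged. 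Real parts may be taken because the left side is real.
\item The exchange is handled exactly as in the proof of Proposition~\ref{prop:qt_ExpectedShortfallFourierTransform}. Insert the Abel damping $r^k$ with $r\in(0,1)$, under which everything converges absolutely. Rewrite the tail sum via the Eulerian-polynomial identity. Then let $r\uparrow1$.
\end{itemize}
Justifying the limit $r\uparrow 1$ is the delicate point. One side is $\sum_{k>q_*} k^p r^k\,\mathbb{E}[q_{t_2}^r\mathbf{1}_{\{q_{t_1}=k\}}]$, which converges by monotone convergence because $\mathbb{E}[q_{t_2}^r q_{t_1}^p]<\infty$. The other side is the damped integral, which equals it for every $r<1$.

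\emph{Item 2.} Here the truncated variable is $Y=\lambda_{t_1}$, and it does not appear with a power. The binomial expansion \eqref{eq:binomial_tail_decomposition} therefore collapses to its $j=0$ term, with $\Gamma(1)=1$. This explains why the kernel is $1/(\alpha_1-\mathrm{i}u_1)$ and no $\lambda_*$ powers appear.
\begin{itemize}
\item Use the representation
\[
\mathbf{1}_{\{v>0\}}=\frac{e^{\alpha_1 v}}{\pi}\,\Re\int_0^\infty \frac{e^{-\mathrm{i}u_1v}}{\alpha_1-\mathrm{i}u_1}\,du_1
\]
with $v=\lambda_{t_1}-\lambda_*$. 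Multiply by $\lambda_{t_2}^r q_{t_2}^p$, take expectations, and exchange.
\item Identify $\mathbb{E}\big[\lambda_{t_2}^r q_{t_2}^p e^{(\alpha_1-\mathrm{i}u_1)\lambda_{t_1}}\big]$ with $\Phi^{(0,r,p)}(\alpha_1-\mathrm{i}u_1,0,0)$. This is again done by differentiating under the expectation.
\item That differentiation requires \eqref{eq:mgf_lambda_t1_lambda_t2_q_t2} to be finite on a neighbourhood of $(\alpha_1,0,0)$. This is where the admissibility of $\alpha_1$ enters. By Corollary~\ref{coro:mgf_lambda} and Theorem~\ref{theo:mgf_triplet}, it suffices that $\alpha_1+b(t_2-t_1,(\delta,\delta,0))$ stays below the explosion threshold of the $\lambda$-Riccati equation on $[0,t_1]$, for some small $\delta>0$.
\end{itemize}

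The main obstacle is the Fubini step in Item 2. For $j=0$ the integrand decays only like $1/u_1$, so absolute integrability fails. I would first split off the limit $\int_0^R$, where Fubini is legitimate because the integrand is bounded by $\mathbb{E}\big[\lambda_{t_2}^r q_{t_2}^p e^{\alpha_1\lambda_{t_1}}\big]/\alpha_1$. Inside the truncated integral, apply the $\Re\int_0^R$ kernel pointwise in $\lambda_{t_1}$. This kernel gives a sine-integral-type function that is uniformly bounded in $R$ and converges to the indicator except at $\lambda_{t_1}=\lambda_*$. That exceptional event is null because $\lambda_{t_1}$ has a (noncentral chi-squared) density. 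Dominated convergence as $R\to\infty$ then gives \eqref{eq:conditional_moment_lambda_two_dates}.

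Should the uniform bound be awkward to establish, the fallback is the argument used in Proposition~\ref{prop:qt_ExpectedShortfallFourierTransform}: show that $u_1\mapsto\Phi^{(0,r,p)}(\alpha_1-\mathrm{i}u_1,0,0)$ is bounded and of bounded variation, using its explicit exponential-affine form. Dirichlet's test then applies.
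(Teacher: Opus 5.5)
Your route is the one the paper indicates. You feed the two-date MGFs of Corollary~\ref{coro:qt_lambdat_two_dates} into Proposition~\ref{prop:JointExpectedShortfallDerivativeFourierTransform}, use the Abel/Eulerian device of Proposition~\ref{prop:qt_ExpectedShortfallFourierTransform} for the discrete truncation, and use the $j=0$ Fourier--Laplace kernel for $\lambda_{t_1}$. Item~2 is sound. Your truncation at $R$, with a bounded sine-integral kernel, dominated convergence and $\mathbb{P}(\lambda_{t_1}=\lambda_*)=0$, is more careful than the paper's appeal to Dirichlet's test.

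The gap is the step ``then let $r\uparrow1$'' in Item~1. Write $\rho$ for the damping, since $r$ is already the moment order. Your monotone-convergence argument establishes
\[
\mathbb{E}\big[q_{t_2}^r q_{t_1}^p\mathbf{1}_{\{q_{t_1}>q_*\}}\big]
=\lim_{\rho\uparrow1}\frac{1}{2\pi}\int_{-\pi}^{\pi}\Re\Big(\Phi^{(0,r)}_{(q_{t_1},q_{t_2})}(\mathrm{i}u_1,0)\sum_{k>q_*}k^p\rho^k e^{-\mathrm{i}u_1k}\Big)du_1 ,
\]
with the limit \emph{outside} the integral, and nothing more. To reach \eqref{eq:conditional_moment_q_two_dates} with the series read as its pointwise Abel sum, you would have to move the limit inside. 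That pointwise sum is the Eulerian closed form $e^{-\mathrm{i}u_1}A_p(e^{-\mathrm{i}u_1})(1-e^{-\mathrm{i}u_1})^{-p-1}-\sum_{k\le q_*}k^pe^{-\mathrm{i}u_1k}$ you borrow from Proposition~\ref{prop:qt_ExpectedShortfallFourierTransform}.

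That interchange is false, not merely unjustified. The real part of the damped kernel contains the Poisson kernel, $\sum_{k\ge1}\rho^k\cos(ku_1)=\tfrac12\big(P_\rho(u_1)-1\big)$. Here $\tfrac12P_\rho$ has total mass $\pi$, which concentrates at $u_1=0$ as $\rho\uparrow1$ and disappears from the pointwise limit.
\begin{itemize}
\item For $p=0$, the pointwise version equals $\mathbb{E}[q_{t_2}^r\mathbf{1}_{\{q_{t_1}>q_*\}}]-\tfrac12\mathbb{E}[q_{t_2}^r]$. With $r=0$ this is $\mathbb{P}(q_{t_1}>q_*)-\tfrac12$, which is negative for the parameters of Table~\ref{tab:parameters}.
\item For $p\ge1$, the pointwise integrand is not integrable at $u_1=0$. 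For $p=1$ the Eulerian term is $e^{-\mathrm{i}u_1}/(1-e^{-\mathrm{i}u_1})^2=-1/(4\sin^2(u_1/2))$, so the integrand behaves like $-\mathbb{E}[q_{t_2}^r]/u_1^2$.
\end{itemize}
The paper's remark that the singular point has Lebesgue measure zero does not repair this, so you cannot lean on it.

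What your argument does prove is the identity displayed above. Equivalent forms are:
\begin{itemize}
\item sum $k^p$ times your coefficient formula over $k>q_*$, with the sum outside the integral, which is legitimate by monotone convergence;
\item read the kernel as a distribution that includes $\pi\delta$ and its derivatives.
\end{itemize}
State and prove Item~1 in one of these forms rather than with the Eulerian kernel integrated pointwise.
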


%\begin{proof}
	%The result follows from the Fourier--Laplace inversion formula used in Proposition~\ref{prop:JointExpectedShortfallDerivativeFourierTransform}. For example,
	%\begin{align}
		%\mathbb{E}\!\left[
		%q_{t_2}^r\mathbf{1}_{\{q_{t_1}>q_*\}}
		%\right]
		%&=
		%\frac{1}{\pi}
		%\int_0^{+\infty}
		%\Re\!\left(
		%\frac{
			%e^{-(\alpha_1-\mathrm{i}u_1)q_*}
			%\Phi_{(q_{t_1},q_{t_2})}^{(0,r)}
			%(\alpha_1-\mathrm{i}u_1,0)
		%}{
			%\alpha_1-\mathrm{i}u_1
		%}
		%\right)\,du_1.
		%\label{eq:conditional_moment_q_two_dates2}
	%\end{align}
	%Setting $r=0$ gives $\mathbb{P}(q_{t_1}>q_*)$. Dividing the first identity by this probability yields \eqref{eq:conditional_moment_q_two_dates}. The proofs of \eqref{eq:conditional_moment_lambda_two_dates} and \eqref{eq:conditional_mixed_moment_two_dates} are identical, using the corresponding moment-generating functions from Corollaries~\ref{coro:lambdat2dates} and \ref{coro:qtlambdat2dates}.
%\end{proof}
\section{Numerical Experiments}\label{sec:numerical_experiments}

In this section, we present numerical experiments illustrating the practical implementation of the analytical formulas derived in previous sections. The objective is twofold. First, we show how the proposed transform-based methodology can be used to evaluate tail-related risk measures associated with the number of customres in the system, its time integral or the arrival intensity. Secondly, we demonstrate that the same framework naturally extends to joint and cross-risk quantities involving these variables.

A key advantage of the proposed approach is that it relies directly on the MGF of the relevant random variable rather than on a closed-form expression for its density or probability mass function. This is particularly useful in queueing models with stochastic intensity where explicit distributional formulas are often unavailable. Fourier inversion then provides an effective numerical tool for evaluating probabilities, truncated expectations, and conditional tail quantities such as expected shortfall.

Throughout this section, all numerical values are computed using the parameter specification reported in Table~\ref{tab:parameters}. Unless otherwise stated, the Fourier inversion is performed over the truncated integration domain $[0,u_{\max}]$, with $u_{\max}=100$, and the damping parameter is chosen to ensure numerical stability.
\begin{center}
\begin{table}[ht]
	\caption{Parameter values used in the numerical experiments.}
	\label{tab:parameters}
	\[
	\begin{array}{ccccccccccccc}
		\hline
		\text{Parameter}   & \kappa 	& \theta 	& \sigma 	& \mu & \lambda_0 & q_0  & t,\, t_1 & t_2 & \lambda_* & q_* & \bar{q}_* & \alpha_1 \\ 	\hline
		\text{Value} 			& 0.7 		& 2  			& 0.3 		& 1  	& 2  				& 4  	 & 1 				& 2 	&  2.5 				& 5 		& 		8				&  1.2			 \\ 		\hline
	\end{array}
	\]
\end{table}
\end{center}
\subsection{Computation of tail risk measures}
\label{subsec:TailRiskMeasuresNumerics}
\paragraph{Risk measures for \(\lambda_t\):}

	We first consider the expected shortfall associated with intensity $\lambda_t$. In order to apply Proposition~\ref{prop:qt_ExpectedShortfallFourierTransform}, we specify a threshold $\lambda_*$ representing the upper tail of the distribution. To this end, we numerically recover the probability mass function of $\lambda_t$ from its MGF and inspect the resulting distribution. Note that in this particular case, then density of the intensity is known explicitly, see \citet[Eq.~(6.2.2)]{JeanblancYorChesney2009}, but we still use its MGF to determine numerically the density. The probability mass function is displayed in Figure~\ref{fig:lambda_pdf}.The plot indicates that
\[
\mathbb{P}(\lambda_t \leq 2.5)\approx 0.94,
\]
so that $\lambda_*=2.5$ is a natural threshold for the upper-tail event. The recovered distribution indicates that

\[\mathbb{E}\!\left[
\mathbf{1}_{\{\lambda_t>\lambda_*\}}
\right]
=
\mathbb{P}(\lambda_t>\lambda_*)
\approx 0.061.\]
Using \eqref{eq:moment_lambda_t} with \(p=1\) and \(p=2\) we obtain 
\[
\mathbb{E}\!\left[\lambda_t\,\mathbf{1}_{\{\lambda_t>\lambda_*\}}\right]\approx 0.162,
\qquad
\mathbb{E}\!\left[\lambda_t^2\,\mathbf{1}_{\{\lambda_t>\lambda_*\}}\right]\approx 0.433,
\]
from which we deduce that the expected shortfall of the intensity above the threshold $\lambda_*$ is
\[
\mathbb{E}\!\left[\lambda_t \mid \lambda_t>\lambda_*\right]
=
\frac{
	\mathbb{E}\!\left[\lambda_t\,\mathbf{1}_{\{\lambda_t>\lambda_*\}}\right]
}{
	\mathbb{P}(\lambda_t>\lambda_*)
}
\approx 2.658,
\]
while the tail variance is
\begin{align}
	\TV_{\lambda_t}(\lambda_*)
=
\mathbb{E}[\lambda_t^2\mid \lambda_t>\lambda_*]
-
\left(\mathbb{E}[\lambda_t\mid \lambda_t>\lambda_*]\right)^2
=\frac{\mathbb{E}\!\left[\lambda_t^2\,\mathbf{1}_{\{\lambda_t>\lambda_*\}}\right]
}{\mathbb{P}(\lambda_t>\lambda_*)}
-\left(\mathbb{E}[\lambda_t\mid \lambda_t>\lambda_*]\right)^2\approx 0.021.
\end{align}

\paragraph{Risk measures for \(q_t\):} We next consider the expected shortfall associated with the number in the system $q_t$. As before, we specify a threshold $q_*$ representing the upper tail of the distribution and we numerically recover the probability mass function of $q_t$ from its MGF and inspect the resulting distribution. The probability mass function is displayed in Figure~\ref{fig:q_pmf}.

The recovered distribution indicates that
\[
\mathbb{P}(q_t\leq q_*=5)\approx 0.96.
\]
Thus, the level $q_*=5$ provides an upper-tail threshold close to the $96\%$ quantile. Using the inversion formula of~\eqref{eq:moment_q_t}, we obtain
\[
\mathbb{P}(q_t>q_*)\approx 0.041,
\qquad
\mathbb{E}\!\left[q_t\,\mathbf{1}_{\{q_t>q_*\}}\right]\approx 0.262,
\qquad
\mathbb{E}\!\left[q_t^2\,\mathbf{1}_{\{q_t>q_*\}}\right]\approx 1.686,
\]
and, consequently, the expected shortfall of the number in the system above the threshold $q_*$ is
\[
\mathbb{E}\!\left[q_t \mid q_t>q_*\right]
=
\frac{
	\mathbb{E}\!\left[q_t\,\mathbf{1}_{\{q_t>q_*\}}\right]
}{
	\mathbb{P}(q_t>q_*)
}
\approx 6.375,
\]
while the tail variance is 
\begin{align}
	\TV_{q_t}(q_*)
	=
	\mathbb{E}[q_t^2\mid q_t>q_*]
	-
	\left(\mathbb{E}[q_t\mid q_t>q_*]\right)^2
	=\frac{\mathbb{E}\!\left[q_t^2\,\mathbf{1}_{\{q_t>q_*\}}\right]
	}{\mathbb{P}(q_t>q_*)}
	-\left(\mathbb{E}[q_t\mid q_t>q_*]\right)^2\approx 0.459.
\end{align}

These values represent the average and variance of number of customers in the system conditional its value being in its upper tail.

\begin{figure}[h!] 
	\centering 
	\begin{subfigure}[b]{0.48\textwidth} 
		\centering \includegraphics[width=\textwidth,height=5cm]{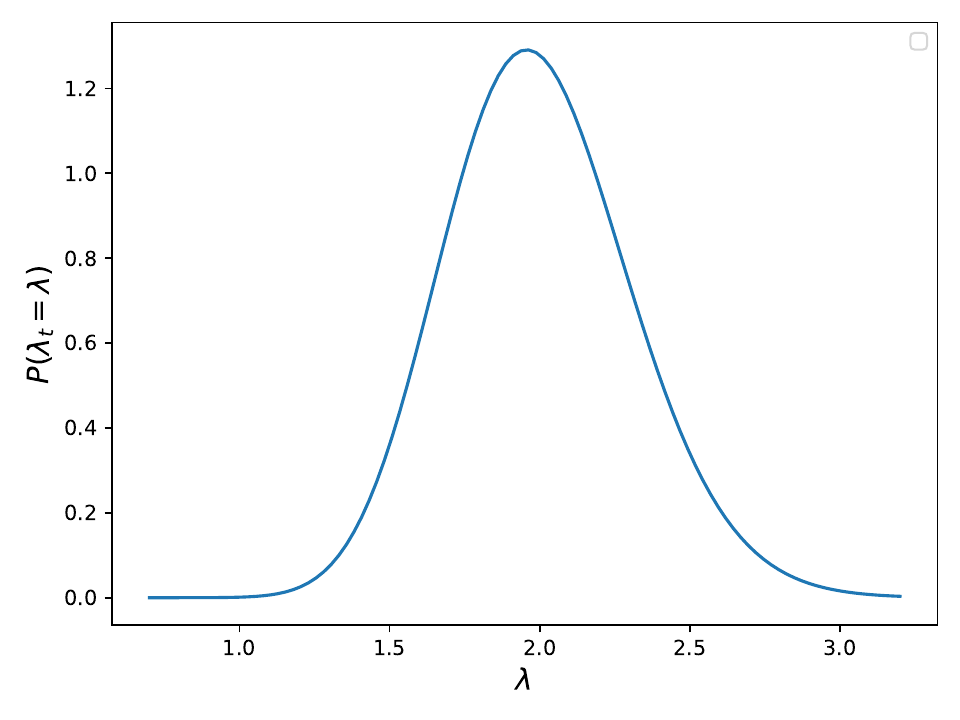} 
		\caption{Panel A} 
		\label{fig:lambda_pdf} 
	\end{subfigure} 
	\hfill 
	\begin{subfigure}[b]{0.48\textwidth} 
		\centering \includegraphics[width=\textwidth,height=5cm]{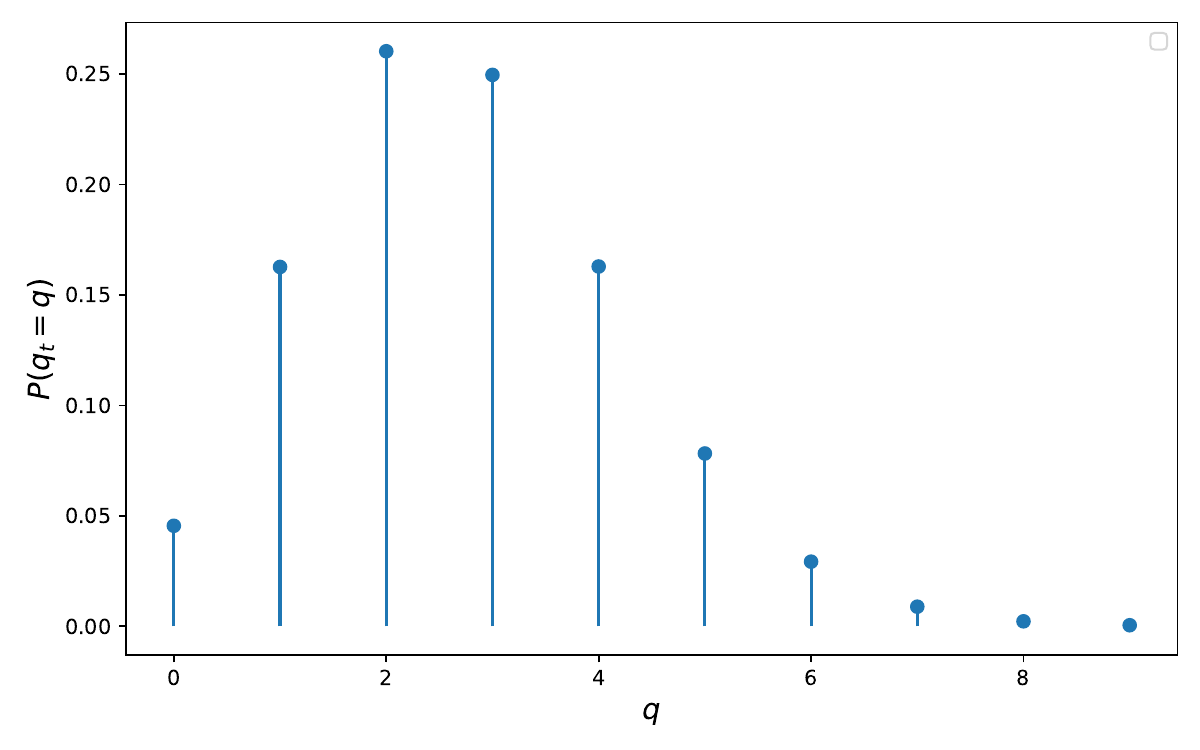} 
		\caption{Panel B} 
		\label{fig:q_pmf} 
	\end{subfigure} 
	\caption{Probability density function of $\lambda_t$ (left panel) and probability mass function of $q_t$ (right panel).} 
	\label{fig:two_figures} 
\end{figure}

\paragraph{Risk measure for \(\bar{q}_t\):}  We next consider the cumulative infinite-server queues occupancy over the time interval $[0,t]$, namely \(\bar{q}_t\). As before, the first step is to identify a suitable tail threshold. This is done by numerically inverting the corresponding transform and plotting the probability density function for \(\bar{q}_t\) and find the adequate level for $\bar q_*$ that gives the targeted quantile.

Numerically we find that the level $\bar q_*=8.4$ is a reasonable approximation to a $91\%$ tail threshold for $\bar{q}_t$. To compute the corresponding tail expectation, we use the same inversion formula as in~\eqref{eq:mgf_integral_q}, replacing the MGF of $q_t$ by the MGF of $\bar{q}_t$ given in Corollary~\ref{coro:mgf_integral_q}. Using \eqref{eq:moment_bar_q_t} for \(p=0\), \(p=1\) and \(p=2\) yields
\[
\mathbb{P}\!\left(\bar{q}_t>\bar q_*\right)
\approx 0.090,
\qquad 
\mathbb{E}\!\left[
\bar{q}_t
\mathbf{1}_{\left\{\bar{q}_t>\bar q_*\right\}}
\right]
\approx 0.834,
\qquad
\mathbb{E}\!\left[
\bar{q}_t^2
\mathbf{1}_{\left\{\bar{q}_t>\bar q_*\right\}}
\right]
\approx 7.845,
\]

so that the expected shortfall for \(\bar{q}_t\) above the threshold $\bar{q}_*$ is
\[
\mathbb{E}\!\left[
\bar{q}_t
\,\middle|\,
\bar{q}_t>\bar{q}_*
\right]
=
\frac{
	\mathbb{E}\!\left[
	\bar{q}_t
	\mathbf{1}_{\left\{\bar{q}_t>\bar{q}_*\right\}}
	\right]
}{
	\mathbb{P}\!\left(\bar{q}_t>\bar{q}_*\right)
}
\approx 9.27.
\]
while the corresponding tail variance is 
\begin{align}
	\TV_{\bar{q}_t}(\bar q_*)
	=\frac{
			\mathbb{E}\!\left[
			\bar{q}_t^2
			\mathbf{1}_{\left\{\bar{q}_t>\bar{q}_*\right\}}
			\right]
		}{
			\mathbb{P}\!\left(\bar{q}_t>\bar{q}_*\right)
	}
	-\mathbb{E}\!\left[
	\bar{q}_t
	\,\middle|\,
	\bar{q}_t>\bar{q}_*
	\right]^2\approx 1.233.
\end{align}

These three  examples illustrate the usefulness of the transform approach in two distinct settings: a discrete state variable, namely the terminal value of the number in the system, and a continuous path-dependent quantity, namely the cumulative value over time of the number in the sytem or the arrival intensity. In both cases, the methodology is the same: identify a suitable tail threshold from the recovered distribution, compute the tail probability and the truncated first moment, and then form the associated conditional tail expectation.

\subsection{Cross-risk measures involving the stochastic intensity} \label{subsec:CrossRiskMeasuresNumerics}

We now turn to joint quantities involving both the number of customers in the system process \(q_t\) and the stochastic intensity process \(\lambda_t\). The objective is to evaluate cross-risk measures that capture the interaction between these two quantities conditional on one of them being unusually high. To implement Proposition~\ref{prop:JointExpectedShortfallDerivativeFourierTransform_queue}, we take the threshold $\lambda_*$ for the intensity found above. Then, using equation~\eqref{eq:ExpectationZqYp-1dim_queue}, we compute
\[
\mathbb{E}\!\left[
q_t\lambda_t\,\mathbf{1}_{\{\lambda_t>\lambda_*\}}
\right]\approx 0.482,
\]
and therefore the corresponding conditional cross-risk measure is
\[
\mathbb{E}\!\left[q_t\lambda_t \mid \lambda_t>\lambda_*\right]
=
\frac{
	\mathbb{E}\!\left[
	q_t\lambda_t\,\mathbf{1}_{\{\lambda_t>\lambda_*\}}
	\right]
}{
	\mathbb{P}(\lambda_t>\lambda_*)
}
\approx 7.894.
\]
Similarly,  using equation~\eqref{eq:ExpectationZqYp-1dim_queueB}, we obtain
\[
\mathbb{E}\!\left[
q_t\lambda_t\,\mathbf{1}_{\{q_t>q_*\}}
\right]\approx 0.541,
\]
which yields to
\[
\mathbb{E}\!\left[q_t\lambda_t \mid q_t>q_*\right]
=
\frac{
	\mathbb{E}\!\left[
	q_t\lambda_t\,\mathbf{1}_{\{q_t>q_*\}}
	\right]
}{
	\mathbb{P}(q_t>q_*)
}
\approx 13.186.
\]

The first quantity measures the average value of the product \(q_t\lambda_t\) in scenarios where the intensity process is in its upper tail, whereas the second corresponds to scenarios in which the number of customers in the system is in its upper tail.

A similar calculation can be performed for the time-integrated number in the system \(\bar{q}_t\). Applying again~\eqref{eq:ExpectationZqYp-1dim_queue}, now with \(\bar{q}_t\) in place of $q_t$, we obtain
\[
\mathbb{E}\!\left[
\bar{q}_t
\lambda_t\,\mathbf{1}_{\{\lambda_t>\lambda_*\}}
\right]\approx 0.919,
\]
and the cross-moment between \(\bar{q}_t\) and \(\lambda_t\) conditional on the intensity being above the threshold \(\lambda_*\) is
\[
\mathbb{E}\!\left[
\bar{q}_t\lambda_t
\,\middle|\,
\lambda_t>\lambda_*
\right]
=
\frac{
	\mathbb{E}\!\left[
	\bar{q}_t
	\lambda_t\,\mathbf{1}_{\{\lambda_t>\lambda_*\}}
	\right]
}{
	\mathbb{P}(\lambda_t>\lambda_*)
}
\approx 15.049.
\]
Compared with the previous conditional moment involving \(q_t\lambda_t\), this larger value reflects the cumulative nature of \(\bar{q}_t\), which aggregates \(q_t\) over the entire time horizon rather than at a single terminal date. As a consequence, it captures prolonged episodes of system stress and may provide a more informative measure in applications where persistently high levels of \(q_t\) are of particular concern.

\subsection{Two-date risk measures}\label{subsec:TwoDateRiskMeasuresNumerics}

We now illustrate the computation of risk measures involving two observation dates. The time-dependent structure of the Cox--BESQ model allows us to evaluate conditional moments at a future date \(t_2\), given that a tail event occurs at an earlier date \(t_1\), with \(0 < t_1 < t_2\). One could reverse the conditioning, that is, compute the expectation of the variables at time \(t_1\) conditional on a variable exceeding a threshold at the future date \(t_2\), but this appears to be of less practical interest. In this subsection, we consider
\[
\mathbb{E}\!\left[\lambda_{t_2}\mid \lambda_{t_1}>\lambda_*\right],
\qquad
\mathbb{E}\!\left[q_{t_2}\mid q_{t_1}>q_*\right],
\qquad
\mathbb{E}\!\left[q_{t_2}\lambda_{t_2}\mid \lambda_{t_1}>\lambda_*\right],
\qquad
\mathbb{E}\!\left[q_{t_2}\lambda_{t_2}\mid q_{t_1}>q_*\right].
\]
These quantities quantify the persistence of the number of customers in the system and the arrival intensity across time.

Using Proposition~\ref{prop:ConditionalMomentsTwoDates} with $\lambda_*=2.5$, we find
\[
\mathbb{E}\!\left[
\lambda_{t_2}\mathbf{1}_{\{\lambda_{t_1}>\lambda_*\}}
\right]\approx 0.142.
\]
Hence,
\[
\mathbb{E}\!\left[\lambda_{t_2}\mid \lambda_{t_1}>\lambda_*\right]
=
\frac{
	\mathbb{E}\!\left[
	\lambda_{t_2}\mathbf{1}_{\{\lambda_{t_1}>\lambda_*\}}
	\right]
}{
	\mathbb{P}(\lambda_{t_1}>\lambda_*)
}
\approx 2.327.
\]
This conditional moment measures the persistence of high arrival intensity from time $t_1$ to the later date $t_2$.

Figure~\ref{fig:lambdat_2dates} displays the dependence of $\mathbb{E}\!\left[\lambda_{t_2}\mid \lambda_{t_1}>\lambda_*\right]$ on $t_2$ over the range $t_2 \in [1,8]$ (knowing that \(t_1=1\)). The curve decays towards its long-term mean value \(\theta=\mathbb{E}[\lambda_{\infty}]\), reflecting the mean-reverting behaviour of the stochastic intensity process.

Next, using equation~\eqref{eq:conditional_moment_q_two_dates}, with the same number of customers in the system threshold $q_*$, we compute
\[
\mathbb{E}\!\left[
q_{t_2}\mathbf{1}_{\{q_{t_1}>q_*\}}
\right]\approx 0.149,
\]
and it follows that
\[
\mathbb{E}\!\left[q_{t_2}\mid q_{t_1}>q_*\right]
=
\frac{
	\mathbb{E}\!\left[
	q_{t_2}\mathbf{1}_{\{q_{t_1}>q_*\}}
	\right]
}{
	\mathbb{P}(q_{t_1}>q_*)
}
\approx 3.634.
\]
This value gives the expected number in the system at the later time $t_2$, conditional on its value being unusually large at time $t_1$. Figure~\ref{fig:qt_2dates} displays the dependence of the two-date expected value
$\mathbb{E}\!\left[q_{t_2}\mid q_{t_1}>q_*\right]$ on $t_2$ over the range $t_2 \in [1,8]$. The curve exhibits mean-reverting behaviour and decays towards its long-term mean value $\frac{\theta}{\mu}$ given in \eqref{Eqn:q_long_term}.

\begin{figure}[h!] 
	\centering 
	\begin{subfigure}[b]{0.48\textwidth} 
		\centering \includegraphics[width=\textwidth,height=5cm]{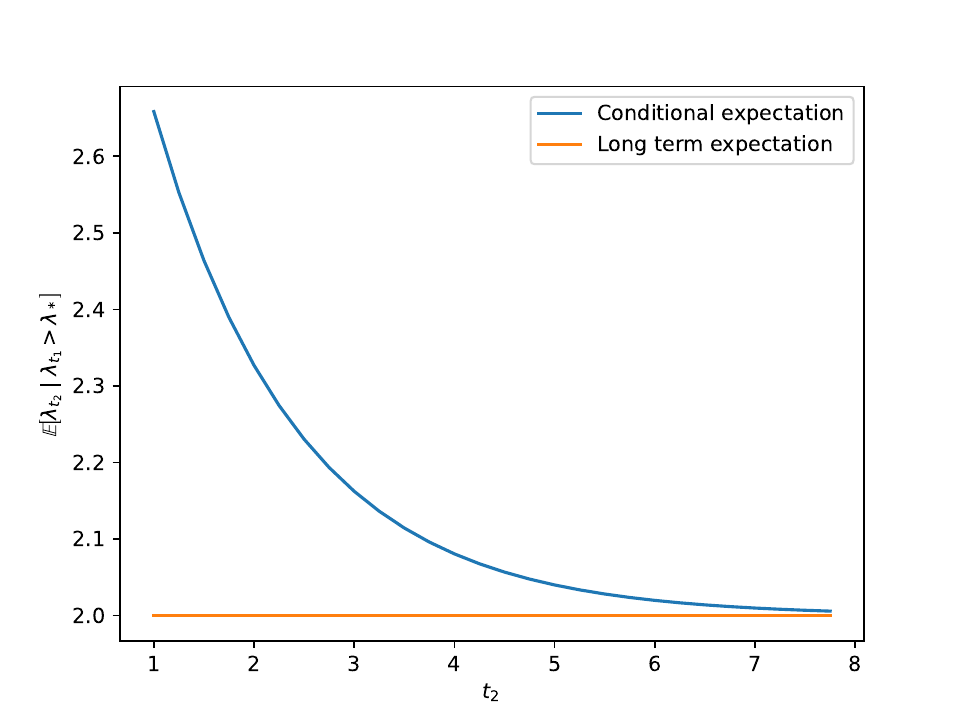} 
		\caption{Panel A} 
		\label{fig:lambdat_2dates} 
	\end{subfigure} 
	\hfill 
	\begin{subfigure}[b]{0.48\textwidth} 
		\centering \includegraphics[width=\textwidth,height=5cm]{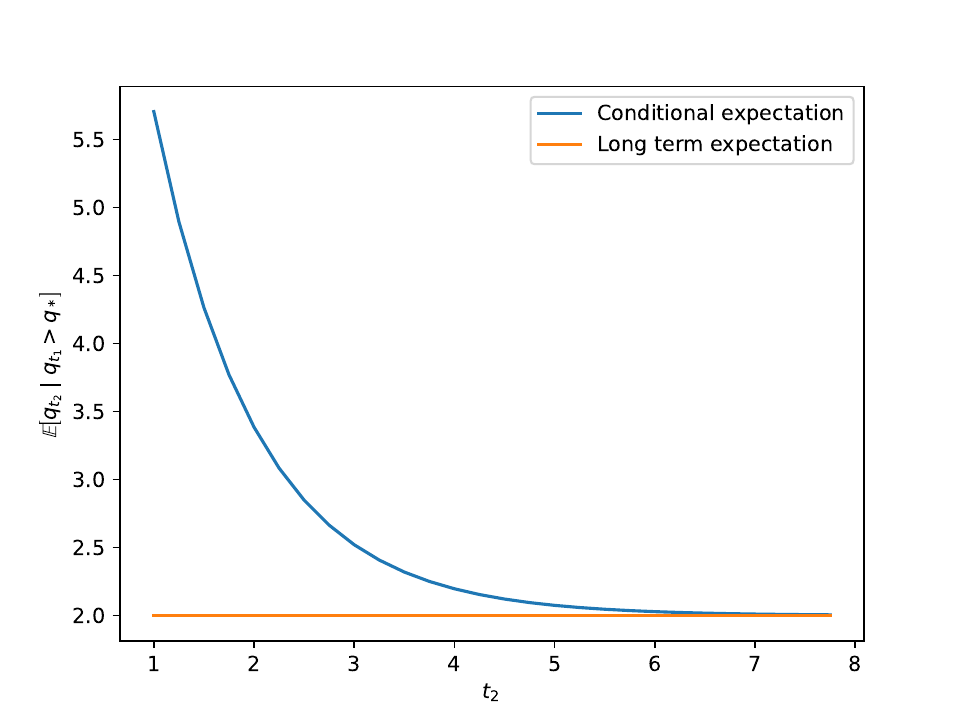} 
		\caption{Panel B} 
		\label{fig:qt_2dates} 
	\end{subfigure} 
	\caption{Dependence of $\mathbb{E}\!\left[\lambda_{t_2}\mid \lambda_{t_1}>\lambda_*\right]$ (left panel) and of $\mathbb{E}\!\left[q_{t_2}\mid q_{t_1}>q_*\right]$ on $t_2$.(right panel).} 
	\label{fig:two_figures_two_dates} 
\end{figure}

Finally, we evaluate the mixed two-date quantity
\[
\mathbb{E}\!\left[q_{t_2}\lambda_{t_2}\mid \lambda_{t_1}>\lambda_*\right],
\qquad 
\mathbb{E}\!\left[q_{t_2}\lambda_{t_2}\mid q_{t_1}>q_*\right].
\]

The numerical inversion gives
\[
\mathbb{E}\!\left[
q_{t_2}\lambda_{t_2}
\mathbf{1}_{\{\lambda_{t_1}>\lambda_*\}}
\right]\approx 0.378,
 \quad
\mathbb{E}\!\left[
q_{t_2}\lambda_{t_2}
\mathbf{1}_{\{q_{t_1}>q_*\}}
\right]\approx 0.306.
\]

Therefore,
\[
\mathbb{E}\!\left[
q_{t_2}\lambda_{t_2}
\,\middle|\,
\lambda_{t_1}>\lambda_*
\right]
=
\frac{
	\mathbb{E}\!\left[
	q_{t_2}\lambda_{t_2}
	\mathbf{1}_{\{\lambda_{t_1}>\lambda_*\}}
	\right]
}{
	\mathbb{P}(\lambda_{t_1}>\lambda_*)
}
\approx 6.187,
\qquad
\mathbb{E}\!\left[
q_{t_2}\lambda_{t_2}
\,\middle|\,
q_{t_1}>q_*
\right]
=
\frac{
	\mathbb{E}\!\left[
	q_{t_2}\lambda_{t_2}
	\mathbf{1}_{\{q_{t_1}>q_*\}}
	\right]
}{
	\mathbb{P}(q_{t_1}>q_*)
}
\approx 7.463.
\]
These quantity can be interpreted as a forward-looking stress indicator: it measures the combined level of congestion and arrival pressure at time $t_2$, conditional on the intensity process or the number of customers in the system being in its upper tail at the earlier time $t_1$.

Figure~\ref{fig:lambdatqt_2dates} displays the dependence of the two-date expected value
\[
\mathbb{E}\!\left[
q_{t_2}\lambda_{t_2}
\,\middle|\,
\lambda_{t_1}>\lambda_*
\right]
\]
on $t_2$ over the range $t_2 \in [1,8]$. The curve decays towards its long-term mean value given in \eqref{Eqn:lamdba_q_long_term} as
\begin{align}
	\frac{2\kappa^{2}\theta^{2}+2\kappa\theta^{2}\mu+\sigma^{2}\theta\mu}
{2\kappa\mu(\kappa+\mu)}
\approx 4.076.
\label{mix_long_team_mean}
\end{align}

Figure~\ref{fig:lambdatqt_2datesB} displays the dependence of the two-date expected value
\[
\mathbb{E}\!\left[
q_{t_2}\lambda_{t_2}
\,\middle|\,
q_{t_1}>q_*
\right]
\]
on $t_2$ over the range $t_2 \in [1,8]$. The curve also decays towards the long-term mean value found in \eqref{mix_long_team_mean}.

\begin{figure}[h!] 
	\centering 
	\begin{subfigure}[b]{0.48\textwidth} 
		\centering \includegraphics[width=\textwidth,height=5cm]{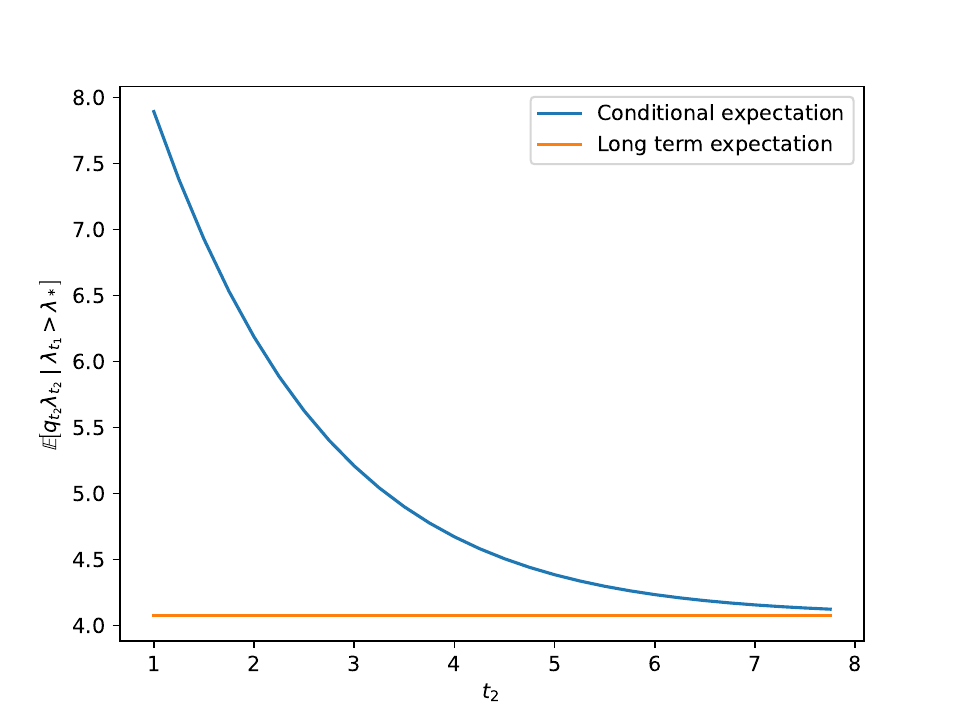} 
		\caption{Panel A} 
		\label{fig:lambdatqt_2dates} 
	\end{subfigure} 
	\hfill 
	\begin{subfigure}[b]{0.48\textwidth} 
		\centering \includegraphics[width=\textwidth,height=5cm]{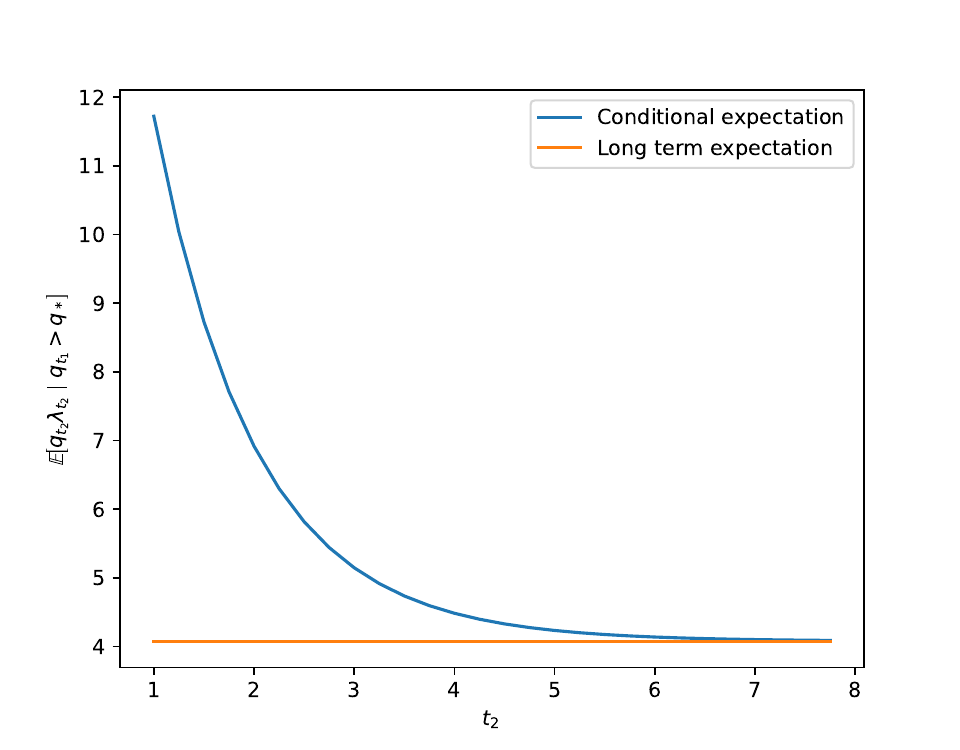} 
		\caption{Panel B} 
		\label{fig:lambdatqt_2datesB} 
	\end{subfigure} 
	\caption{Dependence of $\mathbb{E}\!\left[q_{t_2} \lambda_{t_2}\mid \lambda_{t_1}>\lambda_*\right]$  (left panel) and of $\mathbb{E}\!\left[q_{t_2} \lambda_{t_2}\mid q_{t_1}>q_*\right]$ on $t_2$ (right panel).} 
	\label{fig:two_figures_two_dates_product} 
\end{figure}

The above conditional expectations are computed using the one-dimensional risk measure representations given in Proposition~\ref{prop:JointExpectedShortfallDerivativeFourierTransform}, which significantly reduce the computational time compared with the two-dimensional risk measure representations given in Proposition~\ref{prop:JointExpectedShortfallFourierTransform}. Table~\ref{tab:counts} reports the resulting gain in computational speed.
\begin{table}[h]
	\centering
	\caption{Function call counts and computation time comparison.}
	\label{tab:counts}
	\begin{tabular}{c|c|c}
		\hline
		& counts 
		& time \\
		\hline
		1D integration for cross-risk measure $	\mathbb{E}[q_t \lambda_t \mathbf{1}_{\lbrace \lambda_t>\lambda_* \rbrace}]$ 		& $3363$ 
		& $4.03$ seconds \\ 
		2D integration for cross-risk measure $	\mathbb{E}[q_t \lambda_t \mathbf{1}_{\lbrace \lambda_t>\lambda_* \rbrace}]$ 
	& $6702$ 
	& $24.37$ seconds  \\
		\hline		
	\end{tabular}
\end{table}

\section{Conclusion}\label{sec:Conclusion}

The paper proposes a model for infinite-server queues driven by a Cox--BESQ process that admits explicit formulas for the joint moment-generating function (MGF) of the arrival intensity, the number of customers in the system, and the cumulative number of customers in the system over a time interval. These results are made possible by the affine and solvable properties of the model. Using these same properties, the MGFs at two different dates can also be obtained in closed form. Thanks to these explicit MGFs, Fourier representations of risk measures, either at a single date or across two dates, can be derived. These representations require only one-dimensional numerical integration over the state variables. They provide a finer understanding of the behaviour of infinite-server queues under critical conditions. Finally, numerical experiments demonstrate how the results can be operationalised.

% \clearpage
\footnotesize
\selectfont
\bibliographystyle{abbrvnat}
\bibliography{Biblio}

\end{document}